\numberwithin{equation}{section}
\newtheorem{theorem}{Theorem}[section]
\newtheorem{corollary}[theorem]{Corollary}
\newtheorem{lemma}[theorem]{Lemma}
\newtheorem{prop}[theorem]{Proposition}
\theoremstyle{definition}
\newtheorem{remark}[theorem]{Remark}
\theoremstyle{definition}
\theoremstyle{definition}
\def\dashint{\operatorname%
{\,\,\text{\bf-}\kern-.98em\DOTSI\intop\ilimits@\!\!}}
\def\\det{\text{\det}}
\def\.5{\frac{1}{2}}
\newcommand{\RN}[1]{%
  \textup{\uppercase\expandafter{\romannumeral#1}}%
}
\newcommand{\Div}{\operatorname{div}}
\renewcommand{\epsilon}{\varepsilon}
\newcounter{marnote}
\begin{document}

\title[Estimates for stress concentration of the  Stokes flow ]{Estimates for Stress Concentration between Two Adjacent Rigid Inclusions in Two-dimensional Stokes Flow}

\author[H.G. Li]{Haigang Li}
\address[H.G. Li]{School of Mathematical Sciences, Beijing Normal University, Laboratory of MathematiCs and Complex Systems, Ministry of Education, Beijing 100875, China.}
\email{hgli@bnu.edu.cn}

\author[L.J. Xu]{Longjuan Xu}
\address[L.J. Xu]{Academy for Multidisciplinary Studies, Capital Normal University, Beijing 100048, China.}
\email{ljxu311@163.com}


\date{\today} 

\begin{abstract}
It is vital important in material sciences and fluid mechanics to study the field enhancements in the narrow region between two inclusions. Complex fluids including particle suspensions usually result in complicated flow behavior. In this paper we establish the pointwise upper bounds of the gradient and the second-order partial derivatives for the Stokes flow when two rigid particles are closely spaced suspending in an open bounded domain and away from the boundary in dimension two. Moreover, the lower bounds of the gradient estimates at the narrowest place of the neck region show the optimality of the blow-up rate. These results are valid for inclusions with arbitrary shape.

\end{abstract}

\maketitle

\section{Introduction}

\subsection{Background and Problem Formulation}\label{subsec1.1}
It is well known that Babu\u{s}ka et al. \cite{Bab} numerically studied the initiation and growth of damage in composite materials and observed that the stress still remains bounded even if the distance $\varepsilon$ between inclusions tends to zero. Stimulated by \cite{Bab}, there has been significant progress in the understanding of the field enhancement or the stress concentration in the last two decades in the theory of partial differential equations and numerical analysis. The numerical observation of the boundedness of the stress in \cite{Bab} was rigorously proved by Bonnetier and Vogelius \cite{BV}, Li and Vogelius \cite{LV}, Li and Nirenberg \cite{LN} that the gradient of the solution is uniform bounded with respect to $\varepsilon$. These bounded estimates in \cite{BV,LV,LN} of course depend on the ellipticity of the coefficients. In order to investigate the enhancement caused by the smallness of the interparticles, the coefficients occupying in the inclusions are mathematically assumed to degenerate to infinity or zero, then the situation becomes quite different. In  the context of the Lam\'e system,  the blow-up rate of the stress in between two stiff inclusions is $\varepsilon^{-1/2}$ in dimension two \cite{BLL,KY}, $(\varepsilon|\ln\varepsilon|)^{-1}$ in dimension three \cite{BLL2,Li2021}.  Similar results have been obtained for the scalar case, describing enhancement of the electric field by the perfect conductivity problem  \cite{AKL,AKL3,BC,BLY,BT,Yun,Dong,Li-Li}, and by the  insulated problem  \cite{DLY,DLY2,Wen,LY,Yun2}. From the perspective of practical application in engineering, it is important to characterize the singular behavior, see \cite{KLeY,KLiY,KLiY2,KangYu,Li}. The corresponding results for nonlinear problems, say,  for $p$-Laplacian and Finsler Laplacian, have been investigated in \cite{CS1,CS2}. 

Inspired by the above works for the stress and electric field in the solid-solid composite materials, it is also an interesting and important problem to investigate the stress concentration in the steady Stokes systems when two suspending rigid inclusions are close to each other. There have been several important works on the study of the Stokes flow in the presence of two circular cylinders, see Jeffrey \cite{Jef1,Jef2}, Wannier \cite{Wann}, Hillairet \cite{H}, Hillairet-Kela\"i \cite{HK} and the references therein. However, when the cylinders are close-to-touching, it is difficult to deal with the singular behavior of the solution. Recently, Ammari et al. \cite{AKKY} employed the method of bipolar coordinates to investigate the stress concentration in the two-dimensional Stokes flow when inclusions are the two-dimensional cross sections of circular cylinders of the same radii and the background velocity field is linear.  They derived an asymptotic representation formula for the stress and completely captured the singular behavior of the stress by using the bipolar coordinates and showed the blow-up rate is $\varepsilon^{-1/2}$ in dimension two, the same as the linear elasticity case. However, as mentioned in \cite{AKKY} and in Kang's ICM talk \cite{K}, it is
quite interesting and challenging to extend them to the more general case when the cross sections of the cylinders are strictly convex. In this paper, we address this problem by assuming that inclusions are convex in dimension two. The method developed here may be applied to deal with the three-dimensional case, we will investigate this problem in  forthcoming papers \cite{LX2,LXZ,LXZ2}. 

Before we state our main results precisely, we fix our domain and notation. Let $D$ be a bounded open set in $\mathbb{R}^{2}$, and let $D_{1}^{0}$ and $D_{2}^{0}$ be a pair of (touching at the origin) $C^3$-convex subdomains of $D$, far away from $\partial D$, and satisfy
\begin{equation}\label{def_D0}
D_{1}^{0}\subset\{(x_1, x_{2})\in\mathbb R^{2}~:~ x_{2}>0\},\quad D_{2}^{0}\subset\{(x_1, x_{2})\in\mathbb R^{2}~:~ x_{2}<0\},
\end{equation}
with $\{x_2=0\}$ being their common tangent plane, after a rotation of coordinates if necessary. This means that the axis $x_{1}$ are in the tangent plane. 
Translate $D_{i}^{0}$ ($i=1,\, 2$) by $\pm\frac{\varepsilon}{2}$ along $x_{2}$-axis in the following way
\begin{equation*}
D_{1}^{\varepsilon}:=D_{1}^{0}+(0,\frac{\varepsilon}{2})\quad \text{and}\quad D_{2}^{\varepsilon}:=D_{2}^{0}+(0,-\frac{\varepsilon}{2}).
\end{equation*}
So the distance between $D_1$ and $D_2$ is $\varepsilon$. For simplicity of notation, we drop the superscript $\varepsilon$ and denote
\begin{equation*}
D_{i}:=D_{i}^{\varepsilon}\, (i=1, \, 2), \quad  \Omega:=D\setminus\overline{D_1\cup D_2}.
\end{equation*}
Denote by $P_1:= (0,\frac{\varepsilon}{2})$, $P_2:=(0,-\frac{\varepsilon}{2}) $ the two nearest points between $\partial D_1$ and $\partial D_2$ such that
$\varepsilon=\text{dist}(P_1, P_2)=\text{dist}(\partial D_1, \partial D_2)$. Assume 
$$\mbox{dist}(D_{1}\cup D_{2},\partial D)>\kappa_{0}>0,$$
where $\kappa_{0}$ is a constant independent of $\varepsilon$. Also, suppose that the $C^{3}$-norms of $D_1$ and $D_2$ are bounded by another positive constant $\kappa_{1}$, independent of $\varepsilon$. Denote the linear space of rigid displacements in $\mathbb{R}^{2}$:
$$\Psi:=\Big\{{\boldsymbol\psi}\in C^{1}(\mathbb{R}^{2};\mathbb{R}^{2})~|~e({\boldsymbol\psi}):=\frac{1}{2}(\nabla{\boldsymbol\psi}+(\nabla{\boldsymbol\psi})^{\mathrm{T}})=0\Big\},$$
with a basis $\{\boldsymbol{e}_{i},~x_{k}\boldsymbol{e}_{j}-x_{j}\boldsymbol{e}_{k}~|~1\leq\,i\leq\,2,~1\leq\,j<k\leq\,2\}$, where $e_{1},e_{2}$ are standard bases of $\mathbb{R}^{2}$.

Let us consider the following Stokes flow containing two adjacent rigid particles:
\begin{align}\label{sto}
\begin{cases}
\mu \Delta {\bf u}=\nabla p,\quad\quad~~\nabla\cdot {\bf u}=0,&\hbox{in}~~\Omega:=D\setminus\overline{D_{1}\cup D_{2}},\\
{\bf u}|_{+}={\bf u}|_{-},&\hbox{on}~~\partial{D_{i}},~i=1,2,\\
e({\bf u})=0, &\hbox{in}~~D_{i},~i=1,2,\\
\int_{\partial{D}_{i}}\frac{\partial {\bf u}}{\partial \nu}\Big|_{+}\cdot{\boldsymbol\psi}_{\alpha}-\int_{\partial{D}_{i}}p\,{\boldsymbol\psi}_{\alpha}\cdot
\nu=0
,&i=1,2,\\
{\bf u}={\boldsymbol\varphi}, &\hbox{on}~~\partial{D},
\end{cases}
\end{align}
where $\mu>0$, ${\boldsymbol\psi}_{\alpha}\in\Psi$, $\alpha=1,2,3$, 
$\frac{\partial {\bf u}}{\partial \nu}\big|_{+}:=\mu(\nabla {\bf u}+(\nabla {\bf u})^{\mathrm{T}})\nu,$
and $\nu$ is the  unit outer normal vector of $D_{i},~i=1,2$. Here and throughout this paper the subscript $\pm$ indicates the limit from outside and inside the domain, respectively. Since $D$ is bounded, from $\nabla\cdot{\bf u}=0$ and Gauss theorem, it follows that the
prescribed velocity field ${\boldsymbol\varphi}$ must satisfy the compatibility condition:
\begin{equation}\label{compatibility}
\int_{\partial{D}}{\boldsymbol\varphi}\cdot \nu\,=0,
\end{equation}
to achieve the existence and uniqueness of the solutions. For the Stokes flow in a bounded domain, following the work of Ladyzhenskaya \cite{Lady1959}, it is simple to show the existence and uniqueness of a generalized solution by an integral variational formulation and Riesz representation theorem. It is well known that since the Stokes flow is elliptic in the sense of Douglis-Nirenberg, see \cite{T1984}, the regularity along with appropriate estimates can be obtained directly from the general theory of Agmon, Douglis and Nirenberg \cite{ADN1964} and Solonnikov \cite{Solonni1966}.  

For the regularity of ${\bf u}$, we would like to point out that near $\partial{D}$, the coefficent of \eqref{sto} is constant, and the boundary, as well as the boundary data ${\boldsymbol\varphi}$, is appropriately smooth, so standard elliptic boundary regularity results immediately imply that ${\bf u}$ is $C^{1}$ there. Indeed, since the Stokes system is elliptic in the sense of Douglis-Nirenberg, away from the origin (where the two particles may touch), standard elliptic regularity results (for operators with constant coefficients) can be obtained, as a particular case, from the work of Agmon, Douglis and Nirenberg \cite{ADN1964} and Solonnikov \cite{Solonni1966}, and immediately imply that $|\nabla{\bf u}|$ is bounded. The origin (especially when $\varepsilon=0$), however, presents a serious problem. As mentioned before, a question of particular interest is  whether the stress remains uniformly bounded, even when inclusions touch or nearly touch. Therefore, the main purpose of this paper is to investigate the dependence of $|\nabla{\bf u}|$ on the distance $\varepsilon$ when $\varepsilon$ tends to zero. To this aim, we give more specific information of our domain. Since we assume that $\partial D_1$ and $\partial D_2$ are of $C^{3}$, then near the origin there exists a constant $R$, independent of $\varepsilon$, such that the portions of $\partial D_1$ and $\partial D_2$ are represented, respectively, 
by graphs 
\begin{equation*}
x_2=\frac{\varepsilon}{2}+h_1(x_1)\quad\text{and}\quad x_2=-\frac{\varepsilon}{2}-h_2(x_1),\quad \text{for}~ |x_1|\leq 2R,
\end{equation*}
where $h_1$, $h_2\in C^{3}(B'_{2R}(0))$ and satisfy 
\begin{align}
&-\frac{\varepsilon}{2}-h_{2}(x_1) <\frac{\varepsilon}{2}+h_{1}(x_1),\quad\mbox{for}~~ |x_1|\leq 2R,\label{h1-h2}\\
&h_{1}(0)=h_2(0)=0,\quad \partial_{x_1} h_{1}(0)=\partial_{x_1}h_2(0)=0,\label{h1h1}\\
&h_{1}(x_1)=h_2(x_1)=\frac{\kappa_{2}}{2}x_1^{2}+O(|x_1|^{3}),\quad\mbox{for}~~|x_1|<2R,\label{h1h14}
\end{align}
where the constant $\kappa_{2}>0$. For $0\leq r\leq 2R$, let us define the neck region between $D_{1}$ and $D_{1}$ by
\begin{equation*}
\Omega_r:=\left\{(x_1,x_{2})\in \Omega~:~ -\frac{\varepsilon}{2}-h_2(x_1)<x_{2}<\frac{\varepsilon}{2}+h_1(x_1),~|x_1|<r\right\}.
\end{equation*}

Throughout this paper, we say a constant is {\em{universal}} if it depends only on $\mu,\kappa_{0},\kappa_{1},\kappa_{2}$, and the upper bounds of the $C^{3}$ norm of $\partial{D}$, $\partial{D}_{1}$ and $\partial{D}_{2}$, but independent of $\varepsilon$.

\subsection{Upper Bounds of $|\nabla{\bf u}|$ and $|p|$}
Let  $({\bf u},p)$ be a pair of solution to \eqref{sto}. We introduce the Cauchy stress tensor
\begin{equation}\label{defsigma}
\sigma[{\bf u},p]=2\mu e({\bf u})-p\mathbb{I},
\end{equation}
where $\mathbb{I}$ is the identity matrix. Then we reformulate \eqref{sto} as
\begin{align}\label{Stokessys}
\begin{cases}
\nabla\cdot\sigma[{\bf u},p]=0,~~\nabla\cdot {\bf u}=0,&\hbox{in}~~\Omega,\\
{\bf u}|_{+}={\bf u}|_{-},&\hbox{on}~~\partial{D_{i}},~i=1,2,\\
e({\bf u})=0, &\hbox{in}~~D_{i},~i=1,2,\\
\int_{\partial{D}_{i}}\sigma[{\bf u},p]\cdot{\boldsymbol\psi}_{\alpha}
\nu=0
,&i=1,2,\alpha=1,2,3,\\
{\bf u}={\boldsymbol\varphi}, &\hbox{on}~~\partial{D}.
\end{cases}
\end{align}
Even though there are some technical connections between this paper and previous work on the Lam\'e system, most of the material here is essentially new, due to the presence of the pressure terms. Part (i) of Theorem \ref{mainthm2D} below extends the analogous result in \cite{AKKY} for the circular inclusion case to more general inclusions with arbitrary shape, part (ii) of Theorem \ref{mainthm2D} gives the etimates for the second-order derivatives of ${\bf u}$. The methodology used in the present paper is completely different from those employed in \cite{AKKY}.

\begin{theorem}\label{mainthm2D}(Upper Bounds)
Assume that $D_1,D_2,D,\Omega$, and $\varepsilon$ are defined as in Subsection \ref{subsec1.1}, and ${\boldsymbol\varphi}\in C^{2,\alpha}(\partial D;\mathbb R^2)$ for some $0<\alpha<1$. Let ${\bf u}\in H^1(D;\mathbb R^2)\cap C^1(\bar{\Omega};\mathbb R^2)$ and $p\in L^2(D)\cap C^0(\bar{\Omega})$ be the solution to \eqref{Stokessys} and \eqref{compatibility}. Then for sufficiently small $0<\varepsilon<1$, the following assertions hold.

(i) For any $x\in \Omega_{R}$, 
\begin{align*}
&|\nabla{{\bf u}}(x)|\leq \frac{C}{(\varepsilon+x_1^2)^{1/2}}\|{\boldsymbol\varphi}\|_{C^{2,\alpha}(\partial D;\mathbb R^2)},\\
&\inf_{c\in\mathbb{R}}\|p+c\|_{C^0(\bar{\Omega}_{R})}\leq \frac{C}{\varepsilon}\|{\boldsymbol\varphi}\|_{C^{2,\alpha}(\partial D;\mathbb R^2)},
\end{align*}
and
$$\|\nabla{\bf u}\|_{L^{\infty}(\Omega\setminus\Omega_{R})}+\|p\|_{L^{\infty}(\Omega\setminus\Omega_{R})}\leq\,C\|{\boldsymbol\varphi}\|_{C^{2,\alpha}(\partial D;\mathbb R^2)},$$
where $C$ is a universal constant independent of $\varepsilon$. In particular, 
\begin{equation*}
\|\nabla{{\bf u}}\|_{L^{\infty}(\Omega)}\leq \frac{C}{\sqrt\varepsilon}\|{\boldsymbol\varphi}\|_{C^{2,\alpha}(\partial D;\mathbb R^2)};
\end{equation*}

(ii) For any $x\in \Omega_{R}$, 
\begin{align*}
|\nabla^2{{\bf u}}(x)|+|\nabla p(x)|\leq\frac{C}{(\varepsilon+x_1^2)^{3/2}}\|{\boldsymbol\varphi}\|_{C^{2,\alpha}(\partial D;\mathbb R^2)},
\end{align*}
and
$$\|\nabla^2{\bf u}\|_{L^{\infty}(\Omega\setminus\Omega_{R})}+\|\nabla p\|_{L^{\infty}(\Omega\setminus\Omega_{R})}\leq\,C\|{\boldsymbol\varphi}\|_{C^{2,\alpha}(\partial D;\mathbb R^2)}.$$
\end{theorem}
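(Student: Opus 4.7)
Since $e({\bf u})=0$ in each $D_i$, the restriction ${\bf u}|_{D_i}$ lies in the three-dimensional space $\Psi$, so I would write ${\bf u}|_{D_i}=\sum_{\alpha=1}^{3}C_i^\alpha\boldsymbol{\psi}_\alpha$ for unknown constants $C_i^\alpha\in\mathbb{R}$. For each pair $(i,\alpha)$ introduce the auxiliary Stokes pair $({\bf u}_i^\alpha,p_i^\alpha)$ that equals $\boldsymbol{\psi}_\alpha$ on $\partial D_i$, vanishes on $\partial D_j$ ($j\neq i$) and on $\partial D$, and let $({\bf u}_0,p_0)$ be the Stokes pair equal to $\boldsymbol{\varphi}$ on $\partial D$ and vanishing on $\partial D_1\cup\partial D_2$. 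By linearity
\begin{equation*}
{\bf u}={\bf u}_0+\sum_{i=1,2}\sum_{\alpha=1}^{3}C_i^\alpha{\bf u}_i^\alpha,\qquad p=p_0+\sum_{i=1,2}\sum_{\alpha=1}^{3}C_i^\alpha p_i^\alpha,
\end{equation*}
and the six balance conditions on $\partial D_1,\partial D_2$ determine the $C_i^\alpha$. The proof splits into (a) sharp pointwise estimates for each $({\bf u}_i^\alpha,p_i^\alpha)$ in $\Omega_R$; (b) bounds for $|C_i^\alpha|$ obtained by solving the resulting linear algebraic system.

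\textbf{Divergence-free approximants and a Caccioppoli iteration.} In $\Omega_R$ I would construct an explicit approximant $\bar{{\bf u}}_i^\alpha=\nabla^{\perp}\Psi_i^\alpha$ from a stream function, ensuring $\nabla\cdot\bar{{\bf u}}_i^\alpha=0$ and matching the boundary data of $\boldsymbol\psi_\alpha$ on $\partial D_i\cap\partial\Omega_R$ together with $0$ on the other inclusion. The gap geometry \eqref{h1h14} and direct differentiation give $|\nabla\bar{{\bf u}}_i^\alpha(x)|\lesssim(\varepsilon+x_1^2)^{-1/2}$ in $\Omega_R$, and a matching $\bar p_i^\alpha$ is chosen so that the residual $\mu\Delta\bar{{\bf u}}_i^\alpha-\nabla\bar p_i^\alpha$ is as small as possible. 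Setting $({\bf w},q):=({\bf u}_i^\alpha-\bar{{\bf u}}_i^\alpha,\,p_i^\alpha-\bar p_i^\alpha)$, the pair $({\bf w},q)$ solves a Stokes system in $\Omega_R$ with zero boundary data on $\partial D_1\cup\partial D_2$ and a small source. A Caccioppoli-type energy estimate on the strips $\Omega_r$, iterated in $r$ with strip-width adapted to the local gap size $\delta(x_1):=\varepsilon+\kappa_2 x_1^2$, will control $\int|\nabla{\bf w}|^2$ at an order strictly better than the square of the main term. Rescaling by $\sqrt{\delta(x_1)}$ on balls of radius $\sim\sqrt{\delta(x_1)}/4$ centred at $x\in\Omega_R$ then upgrades the $L^2$ bound to the pointwise estimates
\begin{equation*}
|\nabla{\bf u}_i^\alpha(x)|\lesssim\frac{1}{\sqrt{\delta(x_1)}},\qquad \inf_{c\in\mathbb{R}}|p_i^\alpha(x)+c|\lesssim\frac{1}{\delta(x_1)}.
\end{equation*}
Corresponding estimates for $({\bf u}_0,p_0)$ are uniform in $\varepsilon$ by standard Stokes regularity.

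\textbf{Closing the linear system and second derivatives.} Integration by parts turns the balance conditions into a linear system $\mathbb{A}\,\mathbf{C}=\mathbf{b}$ with the symmetric matrix $\mathbb{A}_{(i,\alpha)(j,\beta)}=\int_{\Omega}2\mu\,e({\bf u}_i^\alpha):e({\bf u}_j^\beta)\,dx$ and right-hand side involving $({\bf u}_0,p_0)$. Using the sharp neck estimates from the previous step I would compute the leading $\varepsilon$-behaviour of each entry of $\mathbb{A}$, verify its invertibility at the appropriate scale, and conclude $|C_i^\alpha|\leq C\|\boldsymbol{\varphi}\|_{C^{2,\alpha}(\partial D)}$; reassembling the decomposition proves part (i), together with the away-from-neck bound which follows from standard elliptic regularity on $\Omega\setminus\Omega_R$. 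For part (ii), with the pointwise bounds of $\nabla{\bf u}$ and $p$ in hand, I would apply the interior Schauder estimate for the Stokes system on a ball $B_{\sqrt{\delta(x_1)}/4}(x)$; rescaling to the unit ball and translating back introduces one extra negative power of $\sqrt{\delta(x_1)}$, so
\begin{equation*}
|\nabla^2{\bf u}(x)|+|\nabla p(x)|\lesssim\frac{1}{\sqrt{\delta(x_1)}}\Bigl(\delta(x_1)^{-1/2}+\delta(x_1)^{-1}\Bigr)\lesssim\delta(x_1)^{-3/2}.
\end{equation*}
The \emph{main obstacle} I foresee lies in the treatment of the rotational rigid displacement $\boldsymbol{\psi}_3$: its normal component on $\partial D_i$ vanishes to leading order near the origin, so the $\varepsilon^{-1}$ singular part of the pressure $p_i^3$ arises only after a delicate cancellation, and must be captured via a higher-order expansion of the stream function $\Psi_i^3$ together with a carefully chosen $\bar p_i^3$. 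This is precisely where the arbitrary convex shape forces a method genuinely different from the bipolar-coordinate approach of \cite{AKKY}, and where the Stokes pressure introduces difficulties absent in the Lamé analysis.
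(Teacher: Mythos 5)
Your overall skeleton (decomposition via rigid motions, divergence-free correctors with a matched pressure, Caccioppoli iteration plus rescaled local estimates, then a linear system for the $C_i^\alpha$) is the same as the paper's, but the quantitative claims at the heart of your plan are not attainable, and this breaks the final assembly. Each ${\bf u}_i^\alpha$ jumps from ${\boldsymbol\psi}_\alpha$ on one inclusion to $0$ on the other across a gap of width $\delta(x_1)=\varepsilon+h_1(x_1)+h_2(x_1)$, so necessarily $|\nabla{\bf u}_i^\alpha|\gtrsim\delta(x_1)^{-1}$ in the neck; the paper proves exactly this two-sided bound (Proposition \ref{propu11}), and for $\alpha=2$ the upper bound is even worse, $\delta^{-1}+|x_1|\delta^{-2}$, with $|p_i^2-(q_i^2)_R|\lesssim\varepsilon^{-2}$ (Proposition \ref{propu12}). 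Your claimed estimates $|\nabla{\bf u}_i^\alpha|\lesssim\delta^{-1/2}$ and $\inf_c|p_i^\alpha+c|\lesssim\delta^{-1}$ are therefore false, and with the true bounds the strategy ``prove $|C_i^\alpha|\le C$ and reassemble'' only yields $|\nabla{\bf u}|\lesssim\varepsilon^{-1}$ at the narrowest point, not the stated $\varepsilon^{-1/2}$. The missing idea is a cancellation between the two inclusions: one must rewrite ${\bf u}=\sum_\alpha(C_1^\alpha-C_2^\alpha){\bf u}_1^\alpha+{\bf u}_b$, where ${\bf u}_b$ involves ${\bf u}_1^\alpha+{\bf u}_2^\alpha$, which is uniformly bounded in $C^2$ because its boundary data coincide on $\partial D_1$ and $\partial D_2$ (Proposition \ref{propu0}), and then prove the differences are small, $|C_1^1-C_2^1|,|C_1^3-C_2^3|\lesssim\sqrt\varepsilon$ and $|C_1^2-C_2^2|\lesssim\varepsilon^{3/2}$ (Proposition \ref{lemCialpha}). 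That in turn requires not just invertibility of the energy matrix but sharp two-sided bounds on its entries (e.g. $a_{11}^{22}\simeq\varepsilon^{-3/2}$, $a_{11}^{11},a_{11}^{33}\simeq\varepsilon^{-1/2}$, off-diagonal entries at most $|\log\varepsilon|$) and Cramer's rule; only after this cancellation does $\sum_\alpha|C_1^\alpha-C_2^\alpha||\nabla{\bf u}_1^\alpha|\lesssim(\sqrt\varepsilon+|x_1|)/(\varepsilon+x_1^2)\lesssim(\varepsilon+x_1^2)^{-1/2}$ emerge.

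Two further points would fail as written. First, your second-derivative step uses interior Schauder estimates on balls of radius $\sim\sqrt{\delta(x_1)}$, but such balls are not contained in the neck, whose thickness is only $\delta(x_1)$; the correct local estimates are boundary estimates (with the zero Dirichlet data on $\partial D_1\cup\partial D_2$) on domains of horizontal and vertical size $\sim\delta$, so each additional derivative costs a factor $\delta^{-1}$, not $\delta^{-1/2}$, and the final rate $(\varepsilon+x_1^2)^{-3/2}$ for $\nabla^2{\bf u}$ and $\nabla p$ again comes only from the $|C_1^\alpha-C_2^\alpha|$ cancellation, since individually $|\nabla^2{\bf u}_1^\alpha|\sim\delta^{-2}$ or worse. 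Second, before any Caccioppoli iteration one must prove that the global energy $\int_\Omega|\nabla({\bf u}_i^\alpha-{\bf v}_i^\alpha)|^2$ is bounded uniformly in $\varepsilon$; this is nontrivial precisely because $\int_\Omega|\nabla{\bf v}_i^2|^2\sim\varepsilon^{-3/2}$, and the paper handles it by rewriting the residual $\mu\Delta{\bf v}_i^\alpha-\nabla\bar p_i^\alpha$ as a polynomial in $x_2$ and integrating by parts in $x_2$ (Lemmas \ref{lem_energyw12} and \ref{lem3.4}); your proposal does not address this obstruction.
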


\begin{remark}
The proof of Theorem \ref{mainthm2D} in Subsection \ref{prfthm} actually gives a point-wise upper bound estimate of $p$ up to a constant; see \eqref{upper-p2D} for the detail. 
\end{remark}

\begin{remark}
The presence of $p$ makes the construction of auxiliary functions  in Subsection \ref{auxiliary} below more involved. This is an essential difficulty compared to the case in Lam\'{e} systems. For this, we have to establish the estimates of $\nabla^2{\bf u}_i$ and $\nabla p_i$, see Propositions \ref{propu11}--\ref{propu0} below. With these estimates in hand, we are ready to prove the upper bounds of $\nabla^2{\bf u}$ and $\nabla p$. However, it is not easy to capture the main singular terms of $\nabla^2{\bf u}$ and $\nabla p$, even to obtain a suitable lower bound  to show the optimality, a similar reason given in Remark \ref{rmk-p}.
\end{remark}

\begin{remark}
If ${\boldsymbol\varphi}=0$, then the solution to \eqref{sto} is ${\bf u}\equiv0$. Theorem \ref{mainthm2D} is trivial in this case. So we only need to prove them for $\|{\boldsymbol\varphi}\|_{C^{2,\alpha}(\partial D;\mathbb R^2)}=1$, by considering ${\bf u}/\|{\boldsymbol\varphi}\|_{C^{2,\alpha}(\partial D;\mathbb R^2)}$.
\end{remark}

\begin{remark}
We would like to point out that our method can be directly applied to deal with the $m$-convex inclusions case, say,  $h_{1}=\kappa_{m}|x_1|^{2m}+O(|x_1|^{2m+1})$ and $h_{2}(x_1)=\tilde\kappa_{m}|x_1|^{2m}+O(|x_1|^{2m+1})$, $m\geq1$, and establish a relationship between the blow-up rates of gradient and the order of the relative convexity of inclusions. The details are left to the interested readers.
\end{remark}

\subsection{Lower Bounds of $|\nabla{\bf u}|$}

To show that the above blow-up rates are optimal, we shall provide a lower bound of $|\nabla{\bf u}(x)|$ on the segment $\overline{P_1 P_2}$, with the same blow-up rate above, under some additional symmetric assumptions on the domain and the given boundary data, for simplicity. Suppose that 
\begin{align*}
({\rm S_{H}}): ~&~D_{1}\cup D_{2}~\mbox{ and~} D ~\mbox{are~ symmetric~ with~ repect~ to~ each~ coordinate~axis~}x_{1}, \\&\mbox{~and~}\mbox{the coordinate plane~} \{x_{2}=0\};
\end{align*}
and
\begin{align*}
({\rm S_{{\boldsymbol\varphi}}}):~~ {\boldsymbol\varphi}^{i}(x)=-{\boldsymbol\varphi}^{i}(-x),\quad\,i=1,2.\hspace{5.6cm}
\end{align*}

Let $\varepsilon=0$, and recall \eqref{def_D0}, 
$$D_{1}^{0}:=\{x\in\mathbb R^{2}~\big|~ x+P_{1}\in D_{1}\},\quad~ D_{2}^{0}:=\{x\in\mathbb R^{2}~\big|~ x-P_{2}\in D_{2}\},$$ 
and set $\Omega^{0}:=D\setminus\overline{D_{1}^{0}\cup D_{2}^{0}}$. Let us define a linear and continuous functional of ${\boldsymbol\varphi}$:
\begin{align}\label{blowupfactor}
\tilde b_{j}^{*\alpha}[{\boldsymbol\varphi}]:=
\int_{\partial D_j^0}{\boldsymbol\psi}_\alpha\cdot\sigma[{\bf u}^*,p^*]\nu,\quad\alpha=1,2,3,
\end{align}
where $({\bf u}^*,p^*)$ verify
\begin{align}\label{maineqn touch}
\begin{cases}
\nabla\cdot\sigma[{\bf u}^{*},p^{*}]=0,\quad\nabla\cdot {\bf u}^{*}=0,\quad&\hbox{in}\ \Omega^{0},\\
{\bf u}^{*}=\sum_{\alpha=1}^{3}C_{*}^{\alpha}{\boldsymbol\psi}_{\alpha},&\hbox{on}\ \partial D_{1}^{0}\cup\partial D_{2}^{0},\\
\int_{\partial{D}_{1}^{0}}{\boldsymbol\psi}_\alpha\cdot\sigma[{\bf u}^*,p^{*}]\nu+\int_{\partial{D}_{2}^{0}}{\boldsymbol\psi}_\alpha\cdot\sigma[{\bf u}^*,p^{*}]\nu=0,&\alpha=1,2,3,\\
{\bf u}^{*}={\boldsymbol\varphi},&\hbox{on}\ \partial{D},
\end{cases}
\end{align}
where the constants $C_{*}^{\alpha}$, $\alpha=1,2,3$, are uniquely determined by the solution $({\bf u}^*,p^*)$. We obtain a lower bound of $|\nabla{{\bf u}}|$ on the segment $\overline{P_{1}P_{2}}$.

\begin{theorem}\label{mainthm2}(Lower Bound)
Assume that $D_1,D_2,D,\Omega$, and $\varepsilon$ are defined as above, and ${\boldsymbol\varphi}\in C^{2,\alpha}(\partial D;\mathbb R^2)$ for some $0<\alpha<1$. Let ${\bf u}\in H^1(D;\mathbb R^2)\cap C^1(\bar{\Omega};\mathbb R^2)$ and $p\in L^2(D)\cap C^0(\bar{\Omega})$ be a solution to \eqref{Stokessys} and \eqref{compatibility}. Then if $\tilde b_{1}^{*1}[{\boldsymbol\varphi}]\neq0$, then there exists  a sufficiently small $\varepsilon_0>0$, such that for $0<\varepsilon<\varepsilon_0$, 
\begin{equation*}
|\nabla{{\bf u}}(0,x_{2})|\geq \frac{\Big|\tilde b_1^{*1}[{\boldsymbol\varphi}]\Big|}{C\sqrt\varepsilon}\|{\boldsymbol\varphi}\|_{C^{2,\alpha}(\partial D;\mathbb R^2)},\quad|x_{2}|\leq\varepsilon.
\end{equation*}
\end{theorem}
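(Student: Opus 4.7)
The starting point is the auxiliary-function decomposition
\begin{equation*}
({\bf u},p) = ({\bf u}_0,p_0) + \sum_{i=1,2}\sum_{\alpha=1}^{3}C_i^\alpha({\bf u}_i^\alpha,p_i^\alpha),
\end{equation*}
already needed for the upper bound: $({\bf u}_i^\alpha,p_i^\alpha)$ solves the Stokes problem with trace ${\boldsymbol\psi}_\alpha$ on $\partial D_i$ and zero on the other inclusion and on $\partial D$, while $({\bf u}_0,p_0)$ carries the genuine datum ${\boldsymbol\varphi}$ on $\partial D$ with zero trace on $\partial D_1\cup\partial D_2$. The six coefficients $C_i^\alpha$ satisfy a linear system $A(\varepsilon)C = b(\varepsilon)$ with $a_{ij}^{\alpha\beta}:=\int_{\partial D_i}{\boldsymbol\psi}_\alpha\cdot\sigma[{\bf u}_j^\beta,p_j^\beta]\nu$ and $b_i^\alpha:=-\int_{\partial D_i}{\boldsymbol\psi}_\alpha\cdot\sigma[{\bf u}_0,p_0]\nu$. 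The limiting problem \eqref{maineqn touch} admits a parallel decomposition, and $\tilde b_1^{*1}[{\boldsymbol\varphi}]$ from \eqref{blowupfactor} plays the role of the limiting right-hand side.

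Under $(S_H)$ and $(S_{\boldsymbol\varphi})$, the point-inversion $({\bf u}(x),p(x))\mapsto(-{\bf u}(-x),p(-x))$ preserves the boundary-value problem, so uniqueness forces $C_2^\alpha=-C_1^\alpha$ for $\alpha=1,2$ and $C_2^3=C_1^3$; the six unknowns collapse to three. A parity argument based on the axial reflection $x_1\mapsto -x_1$ (under which the domain is $R$-invariant) shows that the reduced $3\times 3$ system decouples the odd mode $C_1^1$ from the even modes $C_1^2,C_1^3$ and leaves a scalar equation
\begin{equation*}
\mathcal{A}(\varepsilon)\,C_1^1 = \beta(\varepsilon,{\boldsymbol\varphi}),
\end{equation*}
where $\mathcal{A}(\varepsilon) = a_{11}^{11}-a_{12}^{11}$ and $\beta(\varepsilon,{\boldsymbol\varphi})\to\tilde b_1^{*1}[{\boldsymbol\varphi}]$ as $\varepsilon\to 0$ by a standard compactness argument applied to $({\bf u}_0,p_0)$, whose limit is the corresponding touching auxiliary field on $\Omega^0$.

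Next, Green's identity recasts $\mathcal{A}(\varepsilon)$ as an energy-type integral over $\Omega$; plugging in the leading-order description of ${\bf u}_1^1$ in the neck $\Omega_R$ as a divergence-free correction of the linear interpolation of $\boldsymbol{e}_1$ across the gap (available from the auxiliary estimates in Subsection~\ref{auxiliary} and Propositions~\ref{propu11}--\ref{propu0}) yields $\mathcal{A}(\varepsilon) = a_*\,\varepsilon^{-1/2}+O(1)$ with $a_*>0$, and hence $C_1^1 = (\sqrt{\varepsilon}/a_*)\,\tilde b_1^{*1}[{\boldsymbol\varphi}](1+o(1))$. The same neck asymptotics give $\partial_{x_2}({\bf u}_1^1)^1(0,x_2)\sim 1/\varepsilon$, so $|\nabla{\bf u}_1^1(0,x_2)|\geq c/\varepsilon$ for $|x_2|\leq\varepsilon$. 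The remaining pieces $C_1^2\nabla{\bf u}_1^2+C_1^3\nabla{\bf u}_1^3+C_2^\alpha\nabla{\bf u}_2^\alpha+\nabla{\bf u}_0$ restricted to the axis $x_1=0$ are bounded on the segment, either because their coefficients vanish after the symmetry reduction or because the parity of the corresponding auxiliary solutions makes their gradients of order $|x_1|$ near the axis. Combining,
\begin{equation*}
|\nabla{\bf u}(0,x_2)|\geq |C_1^1|\,|\nabla{\bf u}_1^1(0,x_2)|-C \geq \frac{c\,|\tilde b_1^{*1}[{\boldsymbol\varphi}]|}{\sqrt{\varepsilon}},\qquad |x_2|\leq\varepsilon,
\end{equation*}
for all sufficiently small $\varepsilon$.

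The principal obstacle I anticipate is establishing the strict positivity $a_*>0$ in the singular leading term of $\mathcal{A}(\varepsilon)$. Unlike the Lam\'e case, the pressure piece $\int p_1^1\,{\boldsymbol\psi}_1\cdot\nu$ enters at the same order as the viscous dissipation $\int 2\mu\,|e({\bf u}_1^1)|^2\,dx$, and one must prove the two do not conspire to cancel. This forces careful bookkeeping of the divergence-free corrector used to build ${\bf u}_1^1$ in the neck and a fundamentally different use of incompressibility than in the Lam\'e analysis, precisely the extra difficulty flagged in the remarks preceding the theorem; I expect it to be the technical core of the proof.
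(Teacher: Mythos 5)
Your overall skeleton is the same as the paper's (reduce by symmetry to $C_2^\alpha=-C_1^\alpha$ for $\alpha=1,2$, $C_2^3=C_1^3$; solve a reduced system whose diagonal entry is $a_{11}^{11}\sim\varepsilon^{-1/2}$; multiply the resulting coefficient of size $\sqrt{\varepsilon}\,|\tilde b_1^{*1}|$ by the pointwise bound $|\nabla{\bf u}_1^1(0,x_2)|\geq 1/(C\varepsilon)$), but two of your key steps do not hold as stated. First, the claimed decoupling under $x_1\mapsto-x_1$ is wrong: ${\boldsymbol\psi}_3=(x_2,-x_1)^{\mathrm T}$ transforms with the \emph{same} sign as ${\boldsymbol\psi}_1$ under the reflection $(u^{(1)},u^{(2)})(x)\mapsto(-u^{(1)},u^{(2)})(-x_1,x_2)$, so parity only kills the pairings of modes $1,3$ with mode $2$ (this is exactly \eqref{syma112D}); $a_{11}^{13}$ and $a_{12}^{13}$ are merely bounded, not zero (see \eqref{esta1112} and the term $(C_1^1-C_2^1)(a_{11}^{13}+a_{12}^{13})$ in Proposition \ref{propC2D}). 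Hence your scalar equation for $C_1^1$ necessarily carries a bounded coupling term $C_1^3(a_{11}^{13}+a_{12}^{13})$, and the right-hand side whose limit must be identified is the full flux $\tilde b_1^{1}=\int_{\partial D_1}{\boldsymbol\psi}_1\cdot\sigma[{\bf u}_b,p_b]\nu$, not just the $({\bf u}_0,p_0)$ contribution. Its convergence to $\tilde b_1^{*1}$ therefore requires convergence of the coefficients ($C_2^3\to C_*^3$ in particular) and of the boundary functionals of the ${\bf u}_1^\alpha$; this is precisely the technical core of the paper's Section \ref{sec5} (Lemmas \ref{lem difference v11}--\ref{lem valpha1} and Propositions \ref{propC2D}, \ref{protildeb}, obtained by transferring the functionals to $\partial D$ by integration by parts and proving quantitative boundary convergence of ${\bf u}_1^\alpha-{\bf u}_1^{*\alpha}$), and it is not delivered by ``a standard compactness argument applied to $({\bf u}_0,p_0)$''.

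Second, your reason for discarding the remaining pieces on the segment fails: $({\bf u}_1^2)^{(2)}$ is even in $x_1$ and $\partial_{x_2}({\bf u}_1^2)^{(2)}(0,x_2)\sim 1/\varepsilon$, so $\nabla{\bf u}_1^2$ is neither bounded nor $O(|x_1|)$ near the axis; the piece is harmless only because of the coefficient smallness $|C_1^2-C_2^2|\leq C\varepsilon^{3/2}$ (Proposition \ref{lemCialpha}), which you never invoke. Conversely, the obstacle you single out as the heart of the matter — a possible cancellation between the pressure flux and the viscous dissipation in the leading coefficient — does not exist: integrating by parts and using $\nabla\cdot{\bf u}_1^1=0$ gives $a_{11}^{11}=\int_\Omega 2\mu|e({\bf u}_1^1)|^2\,\mathrm{d}x$ (see \eqref{defaij}), so the pressure drops out of the quadratic form and the lower bound $a_{11}^{11}\geq \varepsilon^{-1/2}/C$ follows from $|\partial_{x_2}({\bf v}_1^1)^{(1)}|=1/\delta$ together with $\|\nabla{\bf w}_1^1\|_{L^\infty}\leq C$ (Proposition \ref{propu11}); moreover the sharp asymptotic $\mathcal{A}(\varepsilon)=a_*\varepsilon^{-1/2}+O(1)$ you propose is neither needed nor proved in the paper — two-sided bounds suffice. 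In short, the architecture is right, but the two load-bearing steps (identification of the limit $\tilde b_1^{*1}$ of the reduced right-hand side, and control of the $\alpha=2$ contribution on $\overline{P_1P_2}$) are missing or incorrectly justified.
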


\begin{remark}
For example, suppose that $D_{1}=B_{1}(0,1+\frac{\varepsilon}{2})$, $D_{2}=B_{1}(0,-1-\frac{\varepsilon}{2})$ are two adjacent balls, contained in a bigger ball $D=B_{4}(0)$. Obviously, they satisfy the symmetry condition $({\rm S_{H}})$. If we take  ${\boldsymbol\varphi}=(0,x_{2})^{\mathrm T}$, clearly satisfying $({\rm S_{{\boldsymbol\varphi}}})$, then through numerical simulation it is not difficult to check  that $\Big|\tilde b_1^{*1}[{\boldsymbol\varphi}]\Big|\neq0$. To further consider the effect from boundary data, we study the boundary estimates when  particles are close to boundary in our forthcoming paper \cite{LXZ}, where new difficulties need to overcome because $h_{1}(x_1)\neq h_{2}(x_1)$ there. 
\end{remark}

\subsection{Main difficulties and Strategy of the proof} 
We follow the iteration approach first used in \cite{LLBY} and further developed in \cite{BLL,BLL2,CL}   to handle the Stokes problem \eqref{sto}. The advantage of this approach is that it does not rely on maximum principle and is therefore applicable to systems of equations.  However, there are several difficulties need to overcome in applying the iteration framework to the Stokes system, although it is known that \cite{AKKL} when the Lam\'{e} constant $\lambda$ goes to infinity while $\mu$ is fixed, the Lam\'{e} system approaches to a modified Stokes  problem $\mu\Delta u+\nabla p=0$ with $p=\lambda\nabla\cdot u$ bounded.  First, since the flow is incompressible, that is, the divergence of the velocity vector in Stokes flow is confined to be zero, it becomes more interesting and challenging to prove whether the stress blows up or not in the case of Stokes flow and how large it is if it actually occurs. 

By taking advantage of $e({\bf u})=0$ in $D_{1}$ and $D_{2}$ and the continuity of the transmission condition, we first decompose the problem at the cost of introducing six free constants and seven Dirichlet boundary problems. For these Dirichlet boundary problems, we focus only on the neck region between two particles and construct a family of auxiliary functions with divergence free by making use of the Keller-type function to fit the boundary conditions, although its divergence obviously does not vanish. To the authors' knowledge there are no papers that manage to tackle this difficulty caused by the divergence free condition. This is the first novelty and difference. Besides, the appearance of the pressure term $p$ turns out to be another essential difficulty. In order to apply an adapted version of energy iteration approach, it is probably the hardest issue to prove the boundedness of the global energy and to obtain appropriate estimates of the local energy for their differences of the solutions and auxiliary functions. Hence, the selection of auxiliary functions for $p$ is also a portion of our constructions.  We overcome this difficulty by choosing certain $\bar{p}$ to make the right-hand side of the equations as small as possible. This is quite different, and much more complicated than the standard outcome in the case of Lam\'e system. More detailed description is presented in Section \ref{sec2D} below. We would like to point out that our method works well for the inclusions with arbitrary shape and in all dimensions. 

The rest of this paper is organized as follows: In Section \ref{sec2D}, we present our main idea to establish the gradient estimates for the solution of Stokes system \eqref{Stokessys}. We explain the main difficulties needed to overcome in the course of adapting the iteration approach and list the ingredients to prove Theorem \ref{mainthm2D}. A sketched proof is given in the end of Section \ref{sec2D}. To prepare for proving the estimates listed in Section \ref{sec2D}, some basic results for the Stokes equations are contained in Section \ref{sec2}. We recall the $W^{m,p}$ estimates for the Stokes equations with partial zero boundary condition that allow us to combine with the bootstrap argument and  scaling argument to establish a general $W^{1,\infty}$ estimate in the narrow region.  Meanwhile, we give a Caccioppoli-type inequality here, which is a starting point to build our iteration formula. In Section \ref{sec_estimate2D}, for each auxiliary function ${\bf v}_{i}^{\alpha}$, we calculate the concrete estimates required for the iteration process, then prove the estimates for ${\bf u}_{i}^{\alpha}$ listed in Section \ref{sec2D}. More precisely, for $\alpha=1$, we use Lemma \ref{lemmaenergy} to prove that the global energy of ${\bf w}_{i}^{1}$ is bounded, then make use of the iteration technique and $W^{1,\infty}$ estimate to prove Proposition \ref{propu11}. However, for $\alpha=2,3$, although the constructed ${\bf v}_{i}^{\alpha}$ are divergence free, they do not satisfy the assumptions of Lemma \ref{lemmaenergy}. By observations, we rewrite the terms $\mu\Delta{\bf v}_{i}^{\alpha}-\nabla\bar{p}_{i}^{\alpha}$ as polynomials of $x_{2}$ and then use the integration by parts with respect to $x_{2}$ to prove the analogous conclusion as Lemma \ref{lemmaenergy}.  Finally, we prove Theorem \ref{mainthm2} for the lower bounds of $|\nabla{\bf u}|$ in Section \ref{sec5} and also give the point-wise estimates for the Cauchy stress tensor $\sigma[{\bf u},p]$.

\section{Main Ingredients of the Proof}\label{sec2D}

This section is devoted to presenting the main ingredients and a sketch of the proof of Theorem \ref{mainthm2D}.

\subsection{Decomposition of the Solution}
In dimension two,
$$\Psi=\mathrm{span}\Bigg\{{\boldsymbol\psi}_{1}=\begin{pmatrix}
1 \\
0
\end{pmatrix},
{\boldsymbol\psi}_{2}=\begin{pmatrix}
0\\
1
\end{pmatrix},
{\boldsymbol\psi}_{3}=\begin{pmatrix}
x_{2}\\
-x_{1}
\end{pmatrix}
\Bigg\}.$$
It follows from $e({\bf u})=0$ that
$${\bf u}=\sum_{\alpha=1}^{3}C_{i}^{\alpha}{\boldsymbol\psi}_{\alpha},\quad\mbox{in}~D_i,\quad i=1,2,$$
where $C_{i}^{\alpha}$ are six free constants to be determined by $({\bf u},p)$ later. We decompose the solution of \eqref{Stokessys}  as follows:
\begin{align}
{\bf u}(x)&=\sum_{i=1}^{2}\sum_{\alpha=1}^{3}C_i^{\alpha}{\bf u}_{i}^{\alpha}(x)+{\bf u}_{0}(x),\quad 
p(x)=\sum_{i=1}^{2}\sum_{\alpha=1}^{3}C_i^{\alpha}p_{i}^{\alpha}(x)+p_{0}(x),\quad x\in\,\Omega,\label{ud}
\end{align}
where ${\bf u}_{i}^{\alpha},{\bf u}_{0}\in{C}^{2}(\Omega;\mathbb R^2),~p_{i}^{\alpha}, p_0\in{C}^{1}(\Omega)$, respectively, satisfying
\begin{equation}\label{equ_v12D}
\begin{cases}
\nabla\cdot\sigma[{\bf u}_{i}^\alpha,p_{i}^{\alpha}]=0,\quad\nabla\cdot {\bf u}_{i}^{\alpha}=0,&\mathrm{in}~\Omega,\\
{\bf u}_{i}^{\alpha}={\boldsymbol\psi}_{\alpha},&\mathrm{on}~\partial{D}_{i},\\
{\bf u}_{i}^{\alpha}=0,&\mathrm{on}~\partial{D_{j}}\cup\partial{D},~j\neq i,
\end{cases}i=1,2,
\end{equation}
and
\begin{equation}\label{equ_v32D}
\begin{cases}
\nabla\cdot\sigma[{\bf u}_{0},p_0]=0,\quad\nabla\cdot {\bf u}_{0}=0,&\mathrm{in}~\Omega,\\
{\bf u}_{0}=0,&\mathrm{on}~\partial{D}_{1}\cup\partial{D_{2}},\\
{\bf u}_{0}={\boldsymbol\varphi},&\mathrm{on}~\partial{D}.
\end{cases}
\end{equation}
Therefore, the major difficulty in analyzing the stress concentration of this problem consists in establishing the gradient estimates of thess ${\bf u}_{i}^{\alpha}$ and determining these free constants $C_{i}^{\alpha}$. 

The philosophy of the above decomposition is as follows: Although these  $C_{i}^{\alpha}$ are free constants, to be  determined later, each ${\bf u}_{i}^{\alpha}$ is the unique solution to a Dirichlet boundary problem. After the study of ${\bf u}_{i}^{\alpha}$,  we in turn use the properties of ${\bf u}_{i}^{\alpha}$ to solve $C_{i}^{\alpha}$.  However, the appearance of the small distance $\varepsilon$ results in previous theories not providing a desired bound. Intuitively, for these ${\bf u}_{i}^{\alpha}$, because the boundary data on $\partial{D}_{1}$ is different with that on $\partial{D}_{2}$, $|\nabla{\bf u}|$ will be singular when $\varepsilon$ tends to zero. It turns out to be true. In fact, we want to know the asymptotic behavior of each $|\nabla{\bf u}|$ near the origin. To this end, we will construct a family of auxiliary functions ${\bf v}_{i}^{\alpha}$ with the same boundary condition as ${\bf u}_{i}^{\alpha}$. These constructions of the auxiliary functions will play a vital role in the establishment of gradient estimates of the solution to Stokes system. Here we need them to satisfy divergence free condition, at least in $\Omega_{R}$. This is one of the main difference with the linear elasticity case \cite{BLL2}, which also causes new difficulties. To identify these auxiliary functions capturing the main singular terms of $|\nabla{\bf u}_{i}^{\alpha}|$ and $p_{i}^{\alpha}$, let us study a general boundary value problem \eqref{w1} below, which the differences $({\bf u}_{i}^{\alpha}-{\bf v}_{i}^{\alpha},p_{i}^{\alpha}-\bar{p}_{i}^{\alpha})$ verify. As mentioned before, since the Stokes system is elliptic in the sense of Douglis-Nirenberg, the regularity and estimates can be obtained from \cite{ADN1964,Solonni1966} provided the global energy is bounded. So that in the sequel we only focus on the establishment of the estimates in the narrow region $\Omega_{R}$, except the boundedness of the global energy in $\Omega$, which is fundamental important.

Compared to Lam\'e system case, there are several differences on the constructions.  We explicate them one by one in the following process of constructions.

\subsection{Construction of Auxiliary Functions and Main Estimates}\label{auxiliary}
In what follows, let us denote
\begin{align}\label{55}
\delta(x_1):=\epsilon+h_{1}(x_1)+h_{2}(x_1),\quad\mbox{for}\, |x_1|\leq 2R.
\end{align}
In order to express our idea clearly and avoid unnecessary difficulties from computation, we assume for simplicity that $h_{1}$ and $h_{2}$ are quadratic and symmetric with respect to the plane $\{x_{2}=0\}$, say, $h_1(x_1)=h_2(x_1)=\frac{1}{2}x_1^2$ for $|x_1|\leq 2R$, (see discussions after the proof of Proposition \ref{propu11} in Subsection \ref{subsec1}). We introduce the Keller-type function $k(x)\in C^{3}(\mathbb{R}^{2})$,
\begin{equation*}
k(x)=\frac{x_{2}}{\delta(x_1)},\quad\hbox{in}\ \Omega_{2R},
\end{equation*}
and $\|k(x)\|_{C^{3}(\mathbb{R}^{2}\backslash\Omega_{R})}\leq C$, and satisfies $k(x)=\frac{1}{2}$ on $\partial D_{1}$,  $k(x)=-\frac{1}{2}$ on $\partial D_{2}$, $k(x)=0$ on $\partial D$. 
Clearly,
\begin{align*}
|k(x)|\leq\frac{1}{2},\quad\mbox{and}\quad\partial_{x_1}k(x)=-\frac{2x_{1}}{\delta(x_{1})}k(x),\quad
\partial_{x_2}k(x)
=\frac{1}{\delta(x_{1})},\quad\hbox{in}\ \Omega_{2R}.
\end{align*}
In the following, we only pay attention to the estimates in $\Omega_{2R}$.

To estimate ~$\nabla{\bf u}_{1}^{1}$, we construct ${\bf v}_{1}^{1}\in C^{2}(\Omega;\mathbb R^2)$, such that 
\begin{align}\label{v11}
{\bf v}_{1}^{1}=\Big(k(x)+\frac{1}{2}\Big)\boldsymbol\psi_{1}+x_{1}\Big(k^2(x)-\frac{1}{4}\Big)\boldsymbol\psi_{2},\quad\hbox{in}\ \Omega_{2R},
\end{align}
and $\|{\bf v}_{1}^{1}\|_{C^{2}(\Omega\setminus\Omega_{R})}\leq\,C$. Compared to \cite{BLL2}, here we use the additional terms $x_{1}\Big(k^2(x)-\frac{1}{4}\Big)\boldsymbol\psi_{2}$ to modify $\Big(k(x)+\frac{1}{2}\Big)\boldsymbol\psi_{1}$ so that the modified function ${\bf v}_{1}^{1}$ become divergence free; that is,
\begin{align}\label{freev112D}
\nabla\cdot {\bf v}_{1}^{1}=\partial_{x_1}({\bf v}^{1}_{1})^{(1)}+\partial_{x_2}({\bf v}^{1}_{1})^{(2)}=0,\quad\mathrm{in}~\Omega_{2R}.
\end{align}
This turns out to be a new difficulty here and in the  subsequent constructions. However, this is only the first step. By a calculation, the first derivatives of ${\bf v}_{1}^{1}$ show the singularity of order $\frac{1}{\delta(x_1)}$, that is,
\begin{align}\label{v11-1d}
\frac{1}{C\delta(x_1)}\leq|\nabla{\bf v}_{1}^{1}|(x)\leq\,\frac{C}{\delta(x_1)}.
\end{align}
In the process of employing the energy iteration approach developed in \cite{LLBY,BLL,BLL2}, to prove \eqref{v11-1d} can capture all the singular terms of $|\nabla{\bf u}_{i}^{\alpha}|$, we have to find an appropriate $\bar{p}_1^{1}$ to make $|\mu\Delta{\bf v}_{1}^{1}-\nabla\bar{p}_1^{1}|$ as small as possible in $\Omega_{2R}$. This is another crucial issue. Indeed, their difference
\begin{align*}
{\bf w}_{1}^{1}:={\bf u}_{1}^{1}-{\bf v}_{1}^{1}\quad\mbox{and}\quad q_{1}^{1}:=p_{1}^{1}-\bar{p}_{1}^{1}
\end{align*}
verify the following general boundary value problem
\begin{align}\label{w1}
\begin{cases}
-\mu\Delta{\bf w}+\nabla q={\bf f}:=\mu\Delta{\bf v}-\nabla\bar{p},\quad&\mathrm{in}\,\Omega,\\
\nabla\cdot {\bf w}=0\quad&\mathrm{in}\,\Omega_{2R},\\
\nabla\cdot {\bf w}=-\nabla\cdot {\bf v}\quad&\mathrm{in}\,\Omega\setminus\Omega_R,\\
{\bf w}=0,\quad&\mathrm{on}\,\partial\Omega.
\end{cases}
\end{align}
By an observation, we choose $\bar{p}_1^{1}\in C^{1}(\Omega)$ such that
\begin{equation}\label{defp11}
\bar{p}_1^1=\frac{2\mu x_1}{\delta(x_1)}k(x),\quad\mbox{in}~\Omega_{2R},
\end{equation}
and  $\|\bar{p}_1^{1}\|_{C^{1}(\Omega\setminus\Omega_{R})}\leq C$. It turns out that $|\mu\Delta{\bf v}_{1}^{1}-\nabla\bar{p}_1^{1}|$ is exactly smaller than $|\mu\Delta{\bf v}_{1}^{1}|$ itself. This choice of $\bar{p}_1^{1}$ enables us to adapt the iteration approach in \cite{BLL2} to work for the Stokes flow, and to prove  that such $\nabla {\bf v}_{1}^{1}$ and $\bar{p}_1^{1}$ are the main singular terms of $\nabla {\bf u}_{1}^{1}$ and $p_1^{1}$, respectively. This is an essential difference with that in \cite{BLL2}. We would like to remark that here the choice of $\bar{p}_1^{1}$ was inspired by the recent paper \cite{AKKY}, where the authors also established the estimates of $p$ in dimension two for circular inclusions $D_{1}$ and $D_{2}$. 

Similarly, for $i=2$, we only need to replace $k(x)=\frac{x_{2}}{\delta(x_1)}$ by $\tilde{k}(x)=\frac{-x_{2}}{\delta(x_1)}$ in $\Omega_{2R}$, which satisfies $\tilde{k}(x)+\frac{1}{2}=1$ on $\partial{D}_{2}$ and $\tilde{k}(x)+\frac{1}{2}=0$ on $\partial{D}_{1}$, then use $\tilde{k}$ to construct the corresponding ${\bf v}_{2}^{1}$ and $\bar{p}_2^{1}$. So, in the sequel, the assertions hold for both $i=1$ and $i=2$, but we only prove the case for $i=1$, and omit the case for $i=2$. 

Let us denote
$$\Omega_{\delta}(x_1):=\left\{(y_1,y_{2})\big| -\frac{\varepsilon}{2}-h_{2}(y_1)<y_{2}
<\frac{\varepsilon}{2}+h_{1}(y_1),\,|y_1-x_1|<\delta \right\},$$
for $|x_1|\leq\,R$, where $\delta:=\delta(x_1)$ is defined by \eqref{55}. Define a constant independent of $\varepsilon$ as follows:
\begin{equation}\label{defqialpha}
(q_{i}^\alpha)_{R}:=\frac{1}{|\Omega_{R}\setminus\Omega_{R/2}|}\int_{\Omega_{R}\setminus\Omega_{R/2}}q_{i}^\alpha(y)\mathrm{d}y.
\end{equation} 
The following estimates hold:

\begin{prop}\label{propu11}
Let ${\bf u}_{i}^{1}\in{C}^{2}(\Omega;\mathbb R^2),~p_{i}^{1}\in{C}^{1}(\Omega)$ be the solution to \eqref{equ_v12D}. Then we have
\begin{equation*}
\|\nabla({\bf u}_{i}^{1}-{\bf v}_{i}^{1})\|_{L^{\infty}(\Omega_{\delta/2}(x_1))}\leq C,~~ \,x\in\Omega_{R},
\end{equation*}
and
\begin{equation*}
\|\nabla^2({\bf u}_{i}^{1}-{\bf v}_{i}^{1})\|_{L^{\infty}(\Omega_{\delta/2}(x_1))}+\|\nabla q_i^1\|_{L^{\infty}(\Omega_{\delta/2}(x_1))}\leq \frac{C}{\delta(x_1)},~~ \,x\in\Omega_{R}.
\end{equation*}
Consequently, in view of \eqref{v11-1d} and \eqref{v11},
\begin{align*}
\frac{1}{C\delta(x_{1})}\leq|\nabla {\bf u}_{i}^1|\leq \frac{C}{\delta(x_{1})},~|\nabla^2{\bf u}_{i}^1|\leq C\left(\frac{1}{\delta(x_{1})}+\frac{|x_1|}{\delta^2(x_{1})}\right),\quad\,x\in\Omega_{R};
\end{align*}
and 
$$|p_{i}^{1}-(q_{i}^1)_{R}|\leq\frac{C}{\varepsilon},~|\nabla p_{i}^{1}|\leq C\left(\frac{1}{\delta(x_{1})}+\frac{|x_1|}{\delta^2(x_{1})}\right),\quad\,x\in\Omega_{R}.$$
\end{prop}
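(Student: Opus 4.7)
The plan is to run the energy iteration framework of \cite{LLBY,BLL,BLL2}, adapted to the Stokes setting via the Caccioppoli-type inequality promised in Section \ref{sec2} and the global energy estimate Lemma \ref{lemmaenergy}. The starting point is to compute the pointwise size of the driving term
\[
{\bf f}_{1}^{1} := \mu\Delta {\bf v}_{1}^{1}-\nabla \bar{p}_{1}^{1} \quad\text{in }\Omega_{2R}.
\]
Each of $\mu\Delta {\bf v}_{1}^{1}$ and $\nabla \bar{p}_{1}^{1}$ is individually of order $1/\delta^{2}$ because of the factors $\partial_{x_{2}}k=1/\delta$; the whole point of the choice \eqref{defp11} is that their leading terms cancel, leaving $|{\bf f}_{1}^{1}|\le C/\delta$ (and a comparable bound for $\nabla\cdot {\bf v}_{1}^{1}$ where needed outside $\Omega_{2R}$). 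I would verify this cancellation by expanding $k$, $\partial_{x_{1}}k$, and $\delta'(x_{1})$ in powers of $x_{1}/\delta$, using $h_{1}=h_{2}=\tfrac12 x_{1}^{2}$, and keeping only the precise residual terms that survive. This is the main obstacle — any sloppy bookkeeping ruins the iteration — and it is the place where the divergence-free modification $x_{1}(k^{2}-\tfrac14){\boldsymbol\psi}_{2}$ has to be tracked carefully, because it contributes to $\mu \Delta {\bf v}_{1}^{1}$ at precisely the order of $\nabla\bar p_{1}^{1}$.

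Next I would localize. Writing ${\bf w}={\bf u}_{1}^{1}-{\bf v}_{1}^{1}$, $q=q_{1}^{1}$, the pair $({\bf w},q)$ solves the Dirichlet Stokes system \eqref{w1} with right-hand side ${\bf f}_{1}^{1}$ in $\Omega$, homogeneous boundary data on $\partial\Omega$, and $\nabla\cdot {\bf w}=0$ inside $\Omega_{2R}$. The global energy $\int_{\Omega}|\nabla {\bf w}|^{2}$ is controlled by Lemma \ref{lemmaenergy}, whose hypotheses are satisfied because ${\bf v}_{1}^{1}$ is divergence-free in $\Omega_{2R}$, bounded with bounded derivatives in $\Omega\setminus\Omega_{R}$, and matches the boundary data. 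The Caccioppoli-type inequality then yields, for $\delta(x_{1})\le t<s\le R$, an iterative relation of the form
\[
\int_{\Omega_{t}(x_{1})}|\nabla {\bf w}|^{2} \le \frac{C}{(s-t)^{2}}\int_{\Omega_{s}(x_{1})\setminus\Omega_{t}(x_{1})}|{\bf w}|^{2} + C\!\!\int_{\Omega_{s}(x_{1})}\!\!|{\bf f}_{1}^{1}||{\bf w}| + \text{lower order},
\]
where the pressure is eliminated by subtracting its mean and exploiting $\nabla\cdot {\bf w}=0$. Substituting the pointwise bound $|{\bf f}_{1}^{1}|\le C/\delta$ and iterating over a geometric sequence of annuli $s_{k}=s_{0}+k\delta$ with $O(|\log\delta|)$ steps in the Gronwall-type way of \cite{BLL2} produces
\[
\int_{\Omega_{\delta(x_{1})}(x_{1})}|\nabla {\bf w}|^{2}\le C\delta(x_{1}).
\]

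Finally I would upgrade the $L^{2}$ gradient estimate to the claimed $L^{\infty}$ bounds by the standard scaling argument. Setting $y=(x-(x_{1},0))/\delta(x_{1})$ flattens $\Omega_{\delta/2}(x_{1})$ into a domain $\widehat\Omega$ of $O(1)$ size, and transforms \eqref{w1} into another Stokes system on $\widehat\Omega$ with bounded-geometry coefficients and rescaled data $\widehat{\bf f}(y)=\delta^{2}{\bf f}_{1}^{1}(\delta y+(x_{1},0))$ of size $O(\delta)$. Applying the $W^{1,p}$ and $W^{2,p}$ boundary/interior estimates from \cite{ADN1964,Solonni1966} (as collected in Section \ref{sec2}) followed by Sobolev embedding yields $\|\nabla\widehat {\bf w}\|_{L^{\infty}(\widehat\Omega)}\le C$, which after unscaling gives the first inequality, and $\|\nabla^{2}\widehat {\bf w}\|_{L^{\infty}}+\|\nabla\widehat q\|_{L^{\infty}}\le C$, which unscales to the second.

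The two-sided pointwise bound $\frac{1}{C\delta}\le|\nabla {\bf u}_{1}^{1}|\le \frac{C}{\delta}$ then follows from the triangle inequality, the $L^{\infty}$ control $|\nabla {\bf w}|\le C$ just obtained, and the explicit two-sided bound \eqref{v11-1d} for $|\nabla {\bf v}_{1}^{1}|$ (since $1/\delta\to\infty$ as $\delta\to 0$, the $O(1)$ perturbation is absorbed). The second-derivative estimate follows by combining $|\nabla^{2}{\bf v}_{1}^{1}|\le C(1/\delta+|x_{1}|/\delta^{2})$, obtained by direct differentiation, with the bound on $\nabla^{2}{\bf w}$. For the pressure, differentiating $\bar p_{1}^{1}$ and using $|\nabla q|\le C/\delta$ yields the stated bound on $|\nabla p_{1}^{1}|$, and integrating $\nabla p_{1}^{1}$ along a path in $\Omega_{R}$ joining $x$ to a point in $\Omega_{R}\setminus\Omega_{R/2}$ (whose length is $O(R)$ and where $\delta\sim 1$) produces $|p_{1}^{1}-(q_{1}^{1})_{R}|\le C/\varepsilon$ after absorbing the $\int dx_{1}/\delta^{2}\sim 1/\sqrt\varepsilon$ and $\int |x_{1}|dx_{1}/\delta^{2}\sim |\log\varepsilon|$ contributions and using the definition \eqref{defqialpha} to fix the constant.
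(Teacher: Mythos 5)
Your outline follows the paper's route (cancellation built into $\bar p_1^1$, global energy, Caccioppoli iteration, rescaled ADN--Solonnikov estimates, triangle inequality), but two steps are genuinely incomplete. First, the global energy bound is not free: Lemma \ref{lemmaenergy} requires, besides \eqref{estv113D3}, the integral hypothesis \eqref{int-fw}, and verifying it is precisely the nontrivial content of the paper's Lemma \ref{lem3.0} (integration by parts in $x_1$ over $\Omega_{r_0}$, exploiting the structural identities $\partial_{x_2x_2}({\bf v}_1^1)^{(1)}=0$ and $\mu\partial_{x_2}({\bf v}_1^1)^{(2)}-\bar p_1^1=0$, so that only $\partial_{x_1}{\bf v}_1^1$ and $\bar p_1^1$, which are in $L^2(\Omega_{r_0})$, survive, plus the mean-value/trace bound \eqref{w1Dw1}). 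You assert the hypotheses are "satisfied" on the strength of divergence-freeness and bounds outside $\Omega_R$, which do not imply \eqref{int-fw}; a direct alternative would be to combine $|{\bf f}_1^1|\le C/\delta$ with the Poincar\'e inequality in the thin direction and $\int_{|x_1|<R}\delta(x_1)\,\mathrm{d}x_1\le C$, but you do neither, so the global energy step is a gap.

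Second, your local energy conclusion $\int_{\Omega_{\delta(x_1)}(x_1)}|\nabla{\bf w}|^2\le C\delta(x_1)$ is a full factor of $\delta$ too weak. Inserting $\int_{\Omega_s}|{\bf f}_1^1|^2\le Cs/\delta$ into \eqref{iterating1} and iterating as in Lemma \ref{lem3.1} gives $C\delta^2(x_1)$ (see \eqref{estw11narrow3D}), and this stronger bound is exactly what the $L^\infty$ step needs: by \eqref{W2pstokes} with $d=2$, $\|\nabla{\bf w}\|_{L^\infty(\Omega_{\delta/2})}\le C\big(\delta^{-1}\|\nabla{\bf w}\|_{L^2(\Omega_\delta)}+\delta\|{\bf f}_1^1\|_{L^\infty}\big)$, so your $C\delta$ only yields $C\delta^{-1/2}$, and correspondingly $C\delta^{-3/2}$ for $\nabla^2{\bf w}$ and $\nabla q_1^1$ — short of the proposition. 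Your unscaling is also misstated: since $\nabla_y\widehat{\bf w}=\delta\nabla_x{\bf w}$, a bound $\|\nabla\widehat{\bf w}\|_{L^\infty}\le C$ unscales to $\|\nabla{\bf w}\|_{L^\infty}\le C/\delta$; to conclude $O(1)$ you need the rescaled energy and data to be $O(\delta)$, i.e. precisely the $C\delta^2$ local energy. Minor further points: the second-order estimate via \eqref{Wmpstokes} also requires checking $|{\bf f}_1^1|+\delta|\nabla{\bf f}_1^1|\le C/\delta$, which you omit, and in the pressure step the path integrals are $\int_0^R \mathrm{d}t/(\varepsilon+t^2)^2\sim\varepsilon^{-3/2}$ and $\int_0^R t\,\mathrm{d}t/(\varepsilon+t^2)^2\sim\varepsilon^{-1}$ rather than $\varepsilon^{-1/2}$ and $|\log\varepsilon|$ (the conclusion $C/\varepsilon$ still holds, but via integrating the stated bound on $|\nabla p_1^1|$, or, as in the paper, by bounding $|q_1^1-(q_1^1)_R|$ and $|\bar p_1^1|$ separately).
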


To estimate ${\bf u}_{1}^{2}$, we  construct  ${\bf v}_{1}^{2}\in C^{2}(\Omega;\mathbb R^2)$, such that 
\begin{align}\label{v12}
{\bf v}_{1}^{2}=\boldsymbol\psi_{2}\Big(k(x)+\frac{1}{2}\Big)
+\frac{6}{\delta(x_{1})}\left(\boldsymbol\psi_{1}x_1+\boldsymbol\psi_{2}x_{2}\Big(\frac{2x_1^2}{\delta(x_{1})}-\frac{1}{3}\Big)\right)
\big(k^2(x)-\frac{1}{4}\big),~\mbox{in}~\Omega_{2R},
\end{align}
and $\|{\bf v}_{1}^{2}\|_{C^{2}(\Omega\setminus\Omega_{R})}\leq\,C$. This construction looks a little more complicated than that for ${\bf v}_{1}^{1}$ in \eqref{v11}. It is trivial to check that $\nabla\cdot{\bf v}_{1}^{2}=0$ in $\Omega_{2R}$, and
\begin{equation}\label{v12upper1}
|\nabla{\bf v}_{1}^{2}|\leq\,C\left(\frac{1}{\delta(x_1)}+\frac{|x_1|}{\delta^{2}(x_1)}\right),\quad\mbox{in}~\Omega_{2R},
\end{equation}
and
\begin{equation}\label{v13lower}
|\nabla{\bf v}_{1}^{2}(0,x_{2})|\geq\,\frac{1}{C}\frac{1}{\delta(x_1)},\quad\mbox{if}~|x_{2}|\leq\frac{\varepsilon}{2}.
\end{equation}
However, by a direct calculation, one can find that there is a very large (or  ``bad") term in fact is from
$$|\partial_{x_2}({\bf v}_{1}^{2})^{(1)}|=\big|\frac{12x_{1}}{\delta^{2}(x_{1})}k(x)\big|\leq\,\frac{C|x_{1}|}{\delta^2(x_1)},\quad\hbox{in}\ \Omega_{2R}.$$ For this reason, we choose a $\bar{p}_1^2\in C^{1}(\Omega)$ such that
\begin{equation}\label{p12}
\bar{p}_1^2=-\frac{3\mu}{\delta^2(x_1)}+\frac{18\mu}{\delta(x_1)}\left(\frac{2x_{1}^{2}}{\delta(x_{1})}-\frac{1}{3}\right)k^{2}(x),\quad\mbox{in}~\Omega_{2R}.
\end{equation}
and  $\|\bar{p}_1^2\|_{C^{1}(\Omega\setminus\Omega_{R})}\leq C$, to make $|\mu\Delta{\bf v}_{1}^{2}-\nabla\bar{p}_1^{2}|$ be as small as possible, and prove Proposition \ref{propu12} below. It turns out that the estimate for $|\nabla{\bf u}_{1}^{2}|$ is the hardest term to control among all estimates of $|\nabla{\bf u}_{1}^{\alpha}|$, because the term $\frac{|x_1|}{\delta^2(x_1)}$ is of order $\varepsilon^{-3/2}$, which is obviously larger than other cases of $\alpha$. 

\begin{prop}\label{propu12}
Let ${\bf u}_{i}^{2}\in{C}^{2}(\Omega;\mathbb R^2),~p_{i}^{2}\in{C}^{1}(\Omega)$ be the solution to \eqref{equ_v12D}. Then we have
\begin{equation*}
\|\nabla({\bf u}_{i}^{2}-{\bf v}_{i}^{2})\|_{L^{\infty}(\Omega_{\delta/2}(x_1))}\leq \frac{C}{\sqrt{\delta(x_1)}},\quad\,x\in\Omega_{R},
\end{equation*}
and 
\begin{equation*}
\|\nabla^2({\bf u}_{i}^{2}-{\bf v}_{i}^{2})\|_{L^{\infty}(\Omega_{\delta/2}(x_1))}+\|\nabla q_i^2\|_{L^{\infty}(\Omega_{\delta/2}(x_1))}\leq C\left(\frac{1}{\delta(x_1)}+\frac{|x_1|}{\delta^2(x_{1})}\right),\quad\,x\in\Omega_{R}.
\end{equation*}
Consequently, in view of \eqref{v12upper1}--\eqref{p12},
\begin{align*}
\frac{1}{C\delta(x_1)}\leq|\nabla {\bf u}_{1}^2|\leq C\left(\frac{1}{\delta(x_1)}+\frac{|x_1|}{\delta^2(x_{1})}\right),~|\nabla^2 {\bf u}_{1}^2|\leq C\left(\frac{1}{\delta^2(x_1)}+\frac{|x_1|}{\delta^3(x_{1})}\right),~\,x\in\Omega_{R};
\end{align*}
and
$$|p_{i}^{2}-(q_{i}^2)_{R}|\leq\,\frac{C}{\varepsilon^2},~|\nabla p_{i}^{2}|\leq\,C\left(\frac{1}{\delta^2(x_1)}+\frac{|x_1|}{\delta^3(x_{1})}\right),\quad\,x\in\Omega_{R}.$$
\end{prop}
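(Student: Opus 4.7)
The plan is to estimate the difference $({\bf w}_i^2, q_i^2) := ({\bf u}_i^2 - {\bf v}_i^2, p_i^2 - \bar{p}_i^2)$, which by construction solves the Stokes-type boundary value problem \eqref{w1} with source ${\bf f} := \mu\Delta{\bf v}_i^2 - \nabla\bar{p}_i^2$ and vanishing boundary data on $\partial\Omega$. The pressure $\bar{p}_1^2$ in \eqref{p12} is tailored precisely so that the most singular contributions of $\mu\Delta{\bf v}_1^2$ (in particular the $1/\delta^3$-type terms coming from differentiating the $6/\delta$ prefactor in \eqref{v12} and the $12x_1/\delta^2$ cross term) cancel against $\nabla\bar{p}_1^2$, leaving ${\bf f}$ of a size that the iteration can absorb. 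My approach mirrors that of Proposition \ref{propu11} but requires more delicate bookkeeping because $|\nabla{\bf v}_1^2|$ is itself of order $|x_1|/\delta^2 \sim \varepsilon^{-3/2}$.

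The first step is the global energy bound $\|\nabla{\bf w}_i^2\|_{L^2(\Omega)} \leq C$. Since ${\bf v}_i^2$ is divergence free in $\Omega_{2R}$ but does not satisfy all hypotheses of Lemma \ref{lemmaenergy} directly, I would follow the strategy signaled in the introduction: expand $\mu\Delta{\bf v}_i^2 - \nabla\bar{p}_i^2$ as a polynomial in $x_2$ with $x_1$-dependent coefficients, test the equation against ${\bf w}_i^2$, and integrate by parts in the $x_2$ variable. The boundary traces at $x_2 = \pm\varepsilon/2 \pm h_i(x_1)$ vanish thanks to ${\bf w}_i^2 = 0$ on $\partial D_1\cup\partial D_2$, and the resulting cancellations together with Korn's inequality close the energy estimate.

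With the global energy controlled, the next step is a Caccioppoli-type iteration on expanding cylinders $\Omega_t(x_1)$ inside the neck region. Setting $F(t) := \int_{\Omega_t(x_1)} |\nabla{\bf w}_i^2|^2$, one tests \eqref{w1} against ${\bf w}_i^2$ multiplied by an appropriate cut-off to obtain a recursive inequality of the form $F(t) \leq c_1 F(t+\eta) + c_2 \int_{\Omega_{t+\eta}(x_1)} |{\bf f}|^2$, where the right-hand side incorporates the explicit polynomial form of ${\bf f}$. Iterating $O(|\log\delta(x_1)|)$ times, as in \cite{BLL2}, yields $F(\delta(x_1)) \leq C\delta(x_1)$. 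Rescaling $\Omega_{\delta/2}(x_1)$ to a fixed-size reference domain and invoking the $W^{1,\infty}$ Stokes estimate recalled in Section \ref{sec2} then converts this into $\|\nabla{\bf w}_i^2\|_{L^\infty(\Omega_{\delta/2}(x_1))} \leq C/\sqrt{\delta(x_1)}$, which is the first claim.

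The main obstacle is the second-order estimate, where the residual source ${\bf f}$ still contains terms of order $|x_1|/\delta^2$ and $\nabla{\bf f}$ has terms of order $|x_1|/\delta^3$. After rescaling, interior $W^{2,p}$ regularity for the Stokes system applied to $({\bf w}_i^2, q_i^2)$ converts the first-order bound into the asserted bound $\|\nabla^2{\bf w}_i^2\|_{L^\infty} + \|\nabla q_i^2\|_{L^\infty} \leq C(1/\delta(x_1) + |x_1|/\delta^2(x_1))$, with the sharper factor appearing through $\nabla{\bf f}$ under rescaling. The oscillation bound $|p_i^2 - (q_i^2)_R| \leq C/\varepsilon^2$ follows by integrating the just-obtained $\nabla q_i^2$ plus $\nabla\bar{p}_i^2$ (whose size is read off \eqref{p12}) along a segment from $(x_1,x_2)$ to a reference point in $\Omega_R\setminus\Omega_{R/2}$. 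Combining with \eqref{v12upper1}--\eqref{p12} gives the pointwise estimates for $\nabla{\bf u}_i^2$, $\nabla^2{\bf u}_i^2$, and $\nabla p_i^2$, while the lower bound on $|\nabla{\bf u}_1^2|$ at $x_1 = 0$ is obtained by subtracting the smaller $C/\sqrt{\delta}$ correction from \eqref{v13lower}.
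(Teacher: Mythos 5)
Your overall route is the same as the paper's: you exploit the cancellation built into $\bar p_1^2$, prove the global energy bound by rewriting the residual as a polynomial in $x_2$ and integrating by parts in $x_2$ (this is exactly the paper's Lemma \ref{lem_energyw12}; note the paper handles the second component differently, using $\mu\partial_{x_2x_2}({\bf v}_1^2)^{(2)}-\partial_{x_2}\bar p_1^2=0$ and an $x_1$-integration by parts, and no Korn inequality is needed since the pressure is controlled via Lemma \ref{corowq}), then run the Caccioppoli iteration \eqref{iterating1} and rescale via Proposition \ref{lemWG2}. Up to the first-order estimate your argument goes through.

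The genuine gap is in the second-order estimate. Your iteration output $\int_{\Omega_{\delta}(x_1)}|\nabla{\bf w}_i^2|^2\le C\delta(x_1)$ is too weak: the paper's iteration, using the refined bound \eqref{L2_f12} with $\sim\delta^{-1/2}$ steps of size $\delta$ (not $O(|\log\delta|)$ steps, which would leave a geometric factor of only $\delta^{\log 4}$ times the global energy), yields $\int_{\Omega_{\delta}(x_1)}|\nabla{\bf w}_i^2|^2\le C\delta^2(x_1)$. This sharper local energy must then be fed directly into \eqref{Wmpstokes} with $m=1$, together with $|{\bf f}_1^2|+\delta|\nabla{\bf f}_1^2|\le C(\delta^{-1}+|x_1|\delta^{-2})$, to get $\|\nabla^2{\bf w}_i^2\|_{L^\infty}+\|\nabla q_i^2\|_{L^\infty}\le C(\delta^{-1}+|x_1|\delta^{-2})$. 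Your proposed shortcut — deducing the second-order bound from the first-order $L^\infty$ bound $C\delta^{-1/2}$ by interior $W^{2,p}$ regularity on the rescaled cell (or, equivalently, using $\|\nabla{\bf w}\|_{L^2(\Omega_\delta)}\le C\sqrt{\delta}$ in \eqref{Wmpstokes}) — only produces $C\delta^{-3/2}$, which is strictly larger than the claimed $C(\delta^{-1}+|x_1|\delta^{-2})$ precisely in the region $|x_1|\lesssim\sqrt{\varepsilon}$, so the stated estimate (and the corresponding sharpness of $\|\nabla q_i^2\|$) is not established by your argument. The downstream consequences you list for $|\nabla^2{\bf u}_1^2|$ and $|\nabla p_i^2|$ happen to survive because they are dominated by the auxiliary functions, but the Proposition's second display itself requires the $C\delta^2$ local energy decay.
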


For ${\bf u}_{1}^{3}$, we construct  ${\bf v}_{1}^{3}\in C^{2}(\Omega;\mathbb R^2)$, such that 
\begin{align}\label{v13}
{\bf v}_{1}^{3}=\boldsymbol\psi_{3}\Big(k(x)+\frac{1}{2}\Big)
+\begin{pmatrix}
1-\frac{4x_{1}^2}{\delta(x_{1})}-5x_{2}k(x)\\\\
2x_1k(x)\left(2-\frac{4x_{1}^{2}}{\delta(x_{1})}-3x_{2}k(x)\right)
\end{pmatrix}
\big(k^2(x)-\frac{1}{4}\big),\quad \mbox{in}~\Omega_{2R},
\end{align}
and choose $\bar{p}_1^3\in C^{1}(\Omega)$ such that
$$\bar{p}_1^3=\frac{2\mu x_1}{\delta^2(x_1)}+\frac{12\mu x_1 }{\delta(x_1)}\Big(1-\frac{2 x_1^2}{\delta(x_1)}\Big)k^{2}(x),\quad\mbox{in}~\Omega_{2R}.$$

\begin{prop}\label{propu13}
Let ${\bf u}_{i}^{3}\in{C}^{2}(\Omega;\mathbb R^2),~p_{i}^{3}\in{C}^{1}(\Omega)$ be the solution to \eqref{equ_v12D}. Then 
\begin{equation*}
\|\nabla({\bf u}_{i}^{3}-{\bf v}_{i}^{3})\|_{L^{\infty}(\Omega_{\delta/2}(x_1))}\leq C,\quad x\in\Omega_{R},
\end{equation*}
and 
\begin{equation*}
\|\nabla^2({\bf u}_{i}^{3}-{\bf v}_{i}^{3})\|_{L^{\infty}(\Omega_{\delta/2}(x_1))}+\|\nabla q_{i}^{3}\|_{L^{\infty}(\Omega_{\delta/2}(x_1))}\leq \frac{C}{\delta(x_{1})},\quad x\in\Omega_{R}.
\end{equation*}
Consequently,
\begin{align*}
|\nabla {\bf u}_{i}^3|\leq \frac{C}{\delta(x_1)},~|\nabla^2 {\bf u}_{i}^3|\leq\frac{C}{\delta^2(x_1)},\quad\,x\in\Omega_{R};
\end{align*}
and
\begin{align*}
|p_{i}^{3}-(q_{i}^3)_{R}|\leq\,\frac{C}{\varepsilon^{3/2}},~|\nabla p_{i}^{3}|\leq\frac{C}{\delta^{2}(x_{1})},\quad\,x\in\Omega_{R}.
\end{align*}
\end{prop}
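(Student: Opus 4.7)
\medskip

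The plan is to follow the same iteration scheme that was used to prove Propositions \ref{propu11} and \ref{propu12}, applied to the difference
\[
{\bf w}_i^3 := {\bf u}_i^3 - {\bf v}_i^3,\qquad q_i^3 := p_i^3 - \bar p_i^3,
\]
which, by construction, solves the boundary value problem \eqref{w1} with source ${\bf f} = \mu\Delta {\bf v}_i^3 - \nabla \bar p_i^3$. First I would check that the prescribed ${\bf v}_1^3$ matches the boundary data of ${\bf u}_1^3$: on $\partial D_1$ one has $k=1/2$, so $k^2-\tfrac14=0$ and the second term vanishes, leaving ${\bf v}_1^3=\boldsymbol\psi_3$; on $\partial D_2$, $k=-1/2$ gives ${\bf v}_1^3=0$; and outside $\Omega_{2R}$ we cut off to make ${\bf v}_1^3$ and $\bar p_1^3$ smooth with bounded $C^3$-norms. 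Next I would verify directly that $\nabla\cdot{\bf v}_1^3=0$ in $\Omega_{2R}$ (this is precisely the purpose of the correction with the vector coefficient of $k^2-\tfrac14$), so the right-hand side of the divergence equation in \eqref{w1} is supported away from $\Omega_R$.

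The crucial analytic step is to compute $\mu\Delta {\bf v}_1^3 - \nabla \bar p_1^3$ explicitly and to observe that the choice
\[
\bar p_1^3 = \frac{2\mu x_1}{\delta^2(x_1)} + \frac{12\mu x_1}{\delta(x_1)}\Big(1-\frac{2x_1^2}{\delta(x_1)}\Big)k^2(x)
\]
is picked precisely to cancel the most singular components of $\mu\Delta{\bf v}_1^3$, leaving ${\bf f}$ with pointwise bound of order $\delta^{-1}(x_1)+|x_1|\delta^{-2}(x_1)$ at worst, together with the size reduction that the surviving terms come multiplied by further factors of $x_2$ or $(k^2-\tfrac14)$. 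In particular, I expect that the typical surviving term has the schematic form $(\text{polynomial in }x_2)/\delta^{m}(x_1)$, which is the structural feature needed to run the improved energy lemma.

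With ${\bf f}$ in hand I would first establish the global energy bound
\[
\int_{\Omega}|\nabla{\bf w}_i^3|^2 \leq C.
\]
Because ${\bf v}_1^3$ does not satisfy the hypotheses of Lemma \ref{lemmaenergy} directly, I would, following the remark immediately before Section \ref{sec2D}, rewrite $\mu\Delta{\bf v}_1^3-\nabla\bar p_1^3$ as a polynomial in $x_2$ whose coefficients are explicit functions of $x_1$, and integrate by parts in $x_2$ against ${\bf w}_i^3$ to trade the bad $\delta^{-m}$ weight for a factor of $\delta$ each time. This is exactly the trick flagged as needed for $\alpha=2,3$, and it is the step I expect to be the main obstacle, because one must pair each surviving monomial of $x_2$ with a factor that vanishes suitably on $\partial D_1 \cup \partial D_2$ (so that the boundary terms drop out) while keeping track of the correct net power of $\delta(x_1)$.

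Once the global energy is controlled, I would run the iteration on the local energies $\int_{\Omega_t}|\nabla {\bf w}_i^3|^2$ as developed in \cite{LLBY,BLL,BLL2} and adapted in the proofs of Propositions \ref{propu11}--\ref{propu12}: the Caccioppoli-type inequality stated in Section \ref{sec2} combined with the bound on ${\bf f}$ produces a recurrence in $t$ whose fixed point delivers
\[
\int_{\Omega_\delta(x_1)}|\nabla {\bf w}_i^3|^2 \leq C\,\delta(x_1),
\]
which upgrades by the rescaled $W^{1,\infty}$ estimate for the Stokes system (with partial zero boundary condition, recalled in Section \ref{sec2}) to $\|\nabla {\bf w}_i^3\|_{L^\infty(\Omega_{\delta/2}(x_1))}\leq C$. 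Finally, for the second derivatives and for $\nabla q_i^3$, I would differentiate the equation for $({\bf w}_i^3,q_i^3)$ once, apply the same Stokes $W^{m,p}$ estimates after rescaling the strip $\Omega_{\delta/2}(x_1)$ to a unit ball, and bound the new source $\nabla{\bf f}$ together with the first-derivative data coming from the previous step; this gives the $C/\delta(x_1)$ bound claimed. Combining with the explicit pointwise bounds for $\nabla{\bf v}_i^3$, $\nabla^2{\bf v}_i^3$ and $\nabla\bar p_i^3$ (and the fact that the $|x_1|/\delta^2$ contributions cancel against the corresponding terms coming from the auxiliary function, which is the reason the final bound is only $C/\delta(x_1)$ rather than $C/\delta(x_1)+C|x_1|/\delta^2(x_1)$ as in Proposition \ref{propu12}) yields the stated consequences for $|\nabla{\bf u}_i^3|$, $|\nabla^2 {\bf u}_i^3|$ and $|\nabla p_i^3|$. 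The bound on $|p_i^3-(q_i^3)_R|$ then follows by integrating $\nabla p_i^3$ along a path in $\Omega_R$, noting $\int_0^R 1/\delta^2(x_1)\,dx_1 \lesssim \varepsilon^{-3/2}$.
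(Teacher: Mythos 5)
Your overall strategy coincides with the paper's: form $({\bf w}_i^3,q_i^3)=({\bf u}_i^3-{\bf v}_i^3,\,p_i^3-\bar p_i^3)$, check the boundary matching and $\nabla\cdot{\bf v}_1^3=0$, obtain the global energy bound by rewriting $\mu\partial_{x_2x_2}({\bf v}_1^3)^{(j)}-\partial_{x_j}\bar p_1^3$ as polynomials in $x_2$ with bounded $x_2$-antiderivatives and integrating by parts in $x_2$ (this is exactly the paper's Lemma \ref{lem3.4}, combined with the $x_1$-integration by parts for the $\partial_{x_1x_1}{\bf v}_1^3$ terms), then run the Caccioppoli iteration and the rescaled Stokes $W^{1,\infty}$/higher-order estimates. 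So the key structural idea is correctly identified.

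However, your quantitative bookkeeping does not deliver the stated conclusions. The whole point of the choice of $\bar p_1^3$ is that the source is genuinely of the same size as in the $\alpha=1$ case: $|{\bf f}_1^3|=|\mu\Delta{\bf v}_1^3-\nabla\bar p_1^3|\le C/\delta(x_1)$ in $\Omega_{2R}$ (indeed $|\mu\partial_{x_2x_2}({\bf v}_1^3)^{(1)}-\partial_{x_1}\bar p_1^3|\le C/\delta$, $|\mu\partial_{x_2x_2}({\bf v}_1^3)^{(2)}-\partial_{x_2}\bar p_1^3|\le C|x_1|/\delta$, and the $\partial_{x_1x_1}{\bf v}_1^3$ terms are of order $1/\delta$), with the analogous bound $|{\bf f}_1^3|+\delta|\nabla{\bf f}_1^3|\le C/\delta$. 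You only claim $|{\bf f}|\lesssim \delta^{-1}+|x_1|\delta^{-2}$ ``at worst'', and you claim the local energy bound $\int_{\Omega_\delta(x_1)}|\nabla{\bf w}_i^3|^2\le C\delta$ rather than the $C\delta^2$ that the iteration actually yields when $\|{\bf f}\|_{L^2(\Omega_s)}^2\lesssim s/\delta$. Feeding your weaker bounds into \eqref{W2pstokes} with $d=2$ gives only $\|\nabla{\bf w}_i^3\|_{L^\infty(\Omega_{\delta/2}(x_1))}\le C\big(\delta^{-1}\cdot\delta^{1/2}+\delta(\delta^{-1}+|x_1|\delta^{-2})\big)\le C/\sqrt{\delta}$, i.e. the conclusion of Proposition \ref{propu12}, not the claimed $\le C$; likewise the second-derivative and $\nabla q_i^3$ bounds would degrade to $C(\delta^{-1}+|x_1|\delta^{-2})$, and the path integration for $p_i^3-(q_i^3)_R$ would then give $\varepsilon^{-2}$ instead of $\varepsilon^{-3/2}$. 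Your closing appeal to a ``cancellation of the $|x_1|/\delta^2$ contributions against the auxiliary function'' is not a valid repair: in this scheme the estimate for ${\bf w}_i^3$ comes solely from the local energy and the pointwise size of ${\bf f}$, and what makes $\alpha=3$ behave like $\alpha=1$ (and not like $\alpha=2$) is precisely that ${\bf f}_1^3$ carries no $|x_1|/\delta^2$ excess and $|\nabla^2{\bf v}_1^3|\le C/\delta^2$. Once you verify these sharper bounds by the explicit computation of \eqref{v1311}--\eqref{v1322} and of $\nabla\bar p_1^3$, the remainder of your argument is exactly the paper's proof of Proposition \ref{propu11}, so the repair is routine but indispensable.
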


Since ${\bf u}_0=0$ on $\partial D_1\cup\partial D_2$ and ${\bf u}_{1}^{\alpha}+{\bf u}_{2}^{\alpha}={\boldsymbol\psi}_{\alpha}$ on $\partial D_1\cup\partial D_2$, the result in \cite{LLBY} for elliptic systems assures the boundedness of their gradients  (indeed, theorem 1.1 there applies directly to ${\bf u}_0$ and ${\bf u}_{1}^{\alpha}+{\bf u}_{2}^{\alpha}$). The proof is immediate and so is omitted. We list the assertion here for convenience.

\begin{prop}\label{propu0}
Let ${\bf u}_0,{\bf u}_{i}^{\alpha}\in{C}^{2,\gamma}(\Omega;\mathbb R^2),~p_0,p_{i}^{\alpha}\in{C}^{1,\gamma}(\Omega)$ be the solution to \eqref{equ_v12D} and \eqref{equ_v32D}. Then
\begin{equation*}
\|\nabla^{k_1}{\bf u}_0\|_{L^{\infty}(\Omega)}+\|\nabla^{k_1}({\bf u}_{1}^{\alpha}+{\bf u}_{2}^{\alpha})\|_{L^{\infty}(\Omega)}\leq C,\quad k_1=1,2;
\end{equation*}
and
\begin{equation*}
\|\nabla^{k_2} p_0\|_{L^{\infty}(\Omega)}+\|\nabla^{k_2}(p_{1}^{\alpha}+p_{2}^{\alpha})\|_{L^{\infty}(\Omega)}\leq C,\quad k_2=0,1.
\end{equation*}
\end{prop}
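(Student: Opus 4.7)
The crucial structural observation is that in both cases the Dirichlet boundary data on $\partial D_{1}\cup\partial D_{2}$ extend to a single globally smooth, divergence-free vector field on $\mathbb{R}^{2}$: for ${\bf u}_{0}$ the data vanish identically, and for ${\bf u}_{1}^{\alpha}+{\bf u}_{2}^{\alpha}$ they equal the same rigid motion ${\boldsymbol\psi}_{\alpha}$ on both particles. Consequently, unlike the individual ${\bf u}_{i}^{\alpha}$, there is no mismatch between the boundary values on $\partial D_{1}$ and $\partial D_{2}$ to drive a concentration of the gradient as $\varepsilon\to 0$. This is exactly the setting in which the elliptic-system argument of \cite{LLBY} applies, and the whole proof reduces to exhibiting a suitable auxiliary field plus invoking the scaling machinery already recalled.

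The plan is to subtract an $\varepsilon$-independent divergence-free auxiliary field. For ${\bf u}_{0}$, put $\tilde{\bf u}\equiv 0$ in $\Omega_{2R}$, and outside extend smoothly to match ${\boldsymbol\varphi}$ on $\partial D$; for ${\bf u}_{1}^{\alpha}+{\bf u}_{2}^{\alpha}$, put $\tilde{\bf u}\equiv{\boldsymbol\psi}_{\alpha}$ in $\Omega_{2R}$ and extend smoothly to vanish on $\partial D$. Both extensions are divergence-free in $\Omega_{2R}$ with $C^{3}$ norm bounded independently of $\varepsilon$, so no Bogovskii correction is needed in the neck. Setting ${\bf w}={\bf u}-\tilde{\bf u}$ and $\tilde{p}\equiv 0$ in $\Omega_{2R}$, the pair $({\bf w},q)$ with $q=p-\tilde{p}$ solves
\begin{equation*}
-\mu\Delta {\bf w}+\nabla q={\bf f},\quad \nabla\cdot {\bf w}=g\quad\text{in }\Omega,\qquad {\bf w}=0\text{ on }\partial D_{1}\cup\partial D_{2},
\end{equation*}
with $\|{\bf f}\|_{L^{\infty}(\Omega)}+\|g\|_{L^{\infty}(\Omega)}\le C$ uniform in $\varepsilon$ and $g\equiv 0$ in $\Omega_{2R}$.

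Global energy boundedness, $\|\nabla {\bf w}\|_{L^{2}(\Omega)}\le C$, follows from the usual energy test against ${\bf w}$ (the analog of the Caccioppoli-type inequality stated in Section \ref{sec2}). To upgrade this to the pointwise bounds, invoke the Douglis--Nirenberg $W^{m,p}$ and Schauder theory of \cite{ADN1964,Solonni1966} and the $W^{1,\infty}$ scaling estimate for the Stokes system with partial zero boundary data recalled in Section \ref{sec2}. Away from $\Omega_{R}$ this is classical elliptic regularity. Inside $\Omega_{R}$, rescale the thin strip $\Omega_{\delta(x_{1})}(x_{1})$ by the factor $1/\delta(x_{1})$ to a domain of unit size with uniformly $C^{2,\alpha}$ boundary. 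Because the rescaled Dirichlet data on the top and bottom of the strip vanish identically and the rescaled right-hand sides remain bounded, Schauder-type estimates on the unit domain give $\varepsilon$-independent bounds which, on unscaling, yield $\|\nabla^{k_{1}}{\bf w}\|_{L^{\infty}(\Omega_{\delta/2}(x_{1}))}\le C$ for $k_{1}=1,2$. The pressure bounds $|q-(q)_{R}|\le C$ and $|\nabla q|\le C$ then follow directly from the equation once $\nabla {\bf w}$ and $\nabla^{2}{\bf w}$ are controlled, and translating back to $(\mathbf{u},p)$ gives the stated estimates.

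The only subtle point is that the rescaling preserves the $C^{2,\alpha}$ norm of the boundary data only because the vanishing is exact on both sides of the strip; any $O(1)$ mismatch between the boundary values on $\partial D_{1}$ and $\partial D_{2}$ would be amplified by the factor $1/\delta$ and reproduce the singular behaviour seen for the individual ${\bf u}_{i}^{\alpha}$. The choice of $\tilde{\bf u}$ is engineered precisely to eliminate this mismatch, which is why the bounds come out uniform in $\varepsilon$ and the proof is short once the earlier framework is in place.
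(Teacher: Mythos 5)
Your overall strategy is sound and close in spirit to what the paper does (the paper simply observes that the boundary data of ${\bf u}_0$ and of ${\bf u}_1^{\alpha}+{\bf u}_2^{\alpha}$ coincide on $\partial D_1$ and $\partial D_2$ and cites the narrow-region result of \cite{LLBY}, omitting details), but your self-contained argument has a genuine gap at its central step. After subtracting your auxiliary field and rescaling the strip $\Omega_{\delta(x_1)}(x_1)$ to unit size, the local estimate you invoke (Proposition \ref{lemWG2}, coming from Theorem \ref{thmWmq}) is \emph{not} controlled by the boundary data and the right-hand side alone: its right-hand side contains $\delta^{-d/2}\|\nabla{\bf w}\|_{L^{2}(\Omega_{\delta}(x_1))}$ (equivalently $\|\nabla\mathcal W\|_{L^2(Q_1)}$ after rescaling). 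With only the global bound $\|\nabla{\bf w}\|_{L^{2}(\Omega)}\le C$ this term is of size $\delta^{-1}$ in $d=2$, so "the rescaled data vanish and the rescaled right-hand sides are bounded, hence the rescaled Schauder estimates are $\varepsilon$-independent" is not a valid deduction; if it were, the same reasoning would show $|\nabla{\bf u}_i^{\alpha}|\le C$ for the individual ${\bf u}_i^{\alpha}$, contradicting Theorem \ref{mainthm2}. What is missing is the local energy decay in the thin strips, namely $\int_{\Omega_{\delta}(z_1)}|\nabla {\bf w}|^{2}\le C\delta^{2}$ for the first-derivative bound and in fact $\le C\delta^{4}$ to get $\|\nabla^{2}{\bf w}\|_{L^\infty}+\|\nabla q\|_{L^\infty}\le C$ from \eqref{Wmpstokes} with $m=1$. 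This is exactly what the Caccioppoli-type inequality \eqref{iterating1} and the iteration of Lemma \ref{lem3.1} provide; in your situation they do deliver the needed decay (even super-polynomially fast), because your auxiliary field equals $0$ or ${\boldsymbol\psi}_\alpha$ in $\Omega_{2R}$, so ${\bf f}=\mu\Delta\tilde{\bf u}-\nabla\tilde p\equiv 0$ in the neck and the iteration has no source term — but this step must be carried out, not bypassed.

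Two smaller repairs: the auxiliary field has to match the prescribed data on \emph{all} of $\partial D_1\cup\partial D_2$, not only on the portion bordering $\Omega_{2R}$ (an easy cutoff of ${\boldsymbol\psi}_\alpha$ near $\overline{D_1\cup D_2}$ does this with $\varepsilon$-independent norms); and the "usual energy test" for the global bound must handle the nonzero divergence $\nabla\cdot{\bf w}=-\nabla\cdot\tilde{\bf u}$ outside $\Omega_R$ through the pressure estimate of Lemma \ref{corowq}, i.e.\ it is Lemma \ref{lemmaenergy} (whose hypothesis \eqref{int-fw} is trivially satisfied here since ${\bf f}$ vanishes in $\Omega_R$), not the Caccioppoli inequality, that gives $\|\nabla{\bf w}\|_{L^{2}(\Omega)}\le C$. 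With the iteration step inserted, your route becomes a correct, self-contained alternative to the paper's citation of \cite{LLBY}.
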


\begin{remark}
Consequently, in view of \eqref{defsigma}, we have 
\begin{equation*}
\|\sigma[{\bf u}_{1}^{\alpha}+{\bf u}_{2}^{\alpha},p_{1}^{\alpha}+p_{2}^{\alpha}]\|_{L^{\infty}(\Omega)}\leq C,\quad \|\sigma[{\bf u}_0,p_0]\|_{L^{\infty}(\Omega)}\leq C,
\end{equation*}
which implies that the stress tensor does not blow up when the boundary data takes the same value on $\partial D_1$ and $\partial D_2$. This extends the results in \cite[Theorems 3.4, 4.1, and 5.1]{AKKY} about  circular cylinders to  more general strictly convex case.
\end{remark}

Based on the above estimates of $\nabla{\bf u}_{i}^{\alpha}$, we try to make use of them to solve the six free constants $C_{i}^{\alpha}$ introduced in \eqref{ud}. From the forth line of \eqref{Stokessys} and the decompositions \eqref{ud}, let us study the following linear system of $C_{i}^{\alpha}$:
\begin{equation}\label{equ-decompositon}
\sum_{i=1}^2\sum\limits_{\alpha=1}^{3} C_{i}^{\alpha}
\int_{\partial D_j}{\boldsymbol\psi}_\beta\cdot\sigma[{\bf u}_{i}^\alpha,p_{i}^{\alpha}]\nu
+\int_{\partial D_j}{\boldsymbol\psi}_\beta\cdot\sigma[{\bf u}_{0},p_{0}]\nu=0,~~\beta= 1,2,3,
\end{equation}
where $j=1,2$. The difficulty is mainly from that the coefficients are completely determined by the boundary integrations involving ${\bf u}_{i}^{\alpha}$ and $p_{i}^{\alpha}$. First of all, the trace theorem can ensure the boundedness of $C_i^{\alpha}$. However, the differences of $|C_1^{\alpha}-C_2^{\alpha}|$ may be some infinitesimal quantities, see Proposition \ref{lemCialpha} below. As a matter of fact, whether they can be solved or estimated depends entirely on how good estimates of $|\nabla{\bf u}_{1}^{\alpha}|$ can be obtained above, including both the upper and the lower bound estimates. Therefore, the establishment of the estimates of ${\bf u}_{i}^{\alpha}$, $p_{i}^{\alpha}$ and $C_1^{\alpha}$ in this paper is actually a complementary unity.  

\begin{prop}\label{lemCialpha}
Let $C_{i}^{\alpha}$ be defined in \eqref{ud}. Then
$$|C_i^{\alpha}|\leq\,C,\quad\,i=1,2,~\alpha=1,2,3,$$
and
\begin{equation*}
|C_1^1-C_2^1|\leq C\sqrt{\varepsilon},\quad |C_1^2-C_2^2|\leq C\varepsilon^{3/2},\quad|C_1^3-C_2^3|\leq C\sqrt{\varepsilon}.
\end{equation*}
\end{prop}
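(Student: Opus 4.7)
The plan is to first establish the uniform bound $|C_i^\alpha|\le C$ by a trace argument, and then to extract the difference estimates by reducing \eqref{equ-decompositon} to a $3\times 3$ linear system for $d^\alpha:=C_1^\alpha-C_2^\alpha$ whose coefficient matrix can be read off from Propositions \ref{propu11}--\ref{propu13}. For the uniform bound, the variational theory for \eqref{Stokessys} gives $\|{\bf u}\|_{H^1(D)}\le C\|{\boldsymbol\varphi}\|_{H^{1/2}(\partial D)}$, and the trace theorem then yields $\|{\bf u}\|_{L^2(\partial D_i)}\le C$. Since ${\bf u}|_{\partial D_i}=\sum_\alpha C_i^\alpha{\boldsymbol\psi}_\alpha$ and $\{{\boldsymbol\psi}_1,{\boldsymbol\psi}_2,{\boldsymbol\psi}_3\}$ have a non-degenerate $L^2$-Gram matrix on $\partial D_i$ uniform in $\varepsilon$ (depending only on $D_i^0$ up to an $O(\varepsilon)$-perturbation), $|C_i^\alpha|\le C$ follows.

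For the differences, testing $\nabla\cdot\sigma[{\bf u}_i^\alpha,p_i^\alpha]=0$ against ${\bf u}_j^\beta$, and using $\nabla\cdot{\bf u}_j^\beta=0$ together with the Dirichlet data in \eqref{equ_v12D}, yields the symmetric identity
\[
-\int_{\partial D_j}{\boldsymbol\psi}_\beta\cdot\sigma[{\bf u}_i^\alpha,p_i^\alpha]\,\nu=2\mu\int_\Omega e({\bf u}_j^\beta):e({\bf u}_i^\alpha)\,dx.
\]
Subtracting the $j=2$ instance of \eqref{equ-decompositon} from the $j=1$ instance and inserting \eqref{ud}, one obtains, for each $\beta=1,2,3$,
\[
\sum_{\alpha=1}^3 H_{\beta\alpha}\,d^\alpha=R_\beta,\qquad H_{\beta\alpha}:=2\mu\int_\Omega e({\bf u}_1^\beta-{\bf u}_2^\beta):e({\bf u}_1^\alpha-{\bf u}_2^\alpha)\,dx,
\]
where $R_\beta$ collects the ${\bf u}_0$ contribution and the (already bounded) $s^\alpha:=C_1^\alpha+C_2^\alpha$ terms. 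Every summand of $R_\beta$ has the form $\int_\Omega e({\bf u}_1^\beta-{\bf u}_2^\beta):e({\bf w})\,dx$ with ${\bf w}\in\{{\bf u}_0,\,{\bf u}_1^{\alpha'}+{\bf u}_2^{\alpha'}\}$. Applying the identity above in reverse and using that $\int_{\partial\Omega}{\boldsymbol\psi}_{\alpha'}\cdot\sigma\,\nu_\Omega=0$ (because ${\boldsymbol\psi}_{\alpha'}$ is rigid and $\nabla\cdot\sigma=0$), each summand reduces to a boundary integral over $\partial D$ against $\sigma[{\bf u}_1^\beta-{\bf u}_2^\beta,p_1^\beta-p_2^\beta]$, which is uniformly bounded there by standard elliptic regularity away from the neck. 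Hence $|R_\beta|\le C$.

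The entries of $H$ are then produced by Propositions \ref{propu11}--\ref{propu13}. Combined with the explicit formulas \eqref{v11}, \eqref{v12}, \eqref{v13} for ${\bf v}_i^\alpha$, integration of the two-sided gradient estimates over $\Omega_R$ gives the diagonal asymptotics $H_{11}\asymp H_{33}\asymp\varepsilon^{-1/2}$ and $H_{22}\asymp\varepsilon^{-3/2}$, while Cauchy--Schwarz bounds off-diagonal entries by the geometric mean of the diagonals. Introducing the anisotropic rescaling $\tilde H:=D^{-1}HD^{-1}$ with $D=\mathrm{diag}(\varepsilon^{-1/4},\varepsilon^{-3/4},\varepsilon^{-1/4})$, the matrix $\tilde H$ has entries of unit size; provided its smallest eigenvalue stays bounded below uniformly in $\varepsilon$, writing $d=D^{-1}\tilde H^{-1}D^{-1}R$ with $|R|\le C$ delivers exactly $|d^1|,|d^3|\le C\sqrt\varepsilon$ and $|d^2|\le C\varepsilon^{3/2}$.

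The main obstacle is this last point, namely the uniform positive definiteness of $\tilde H$. While $H\succ 0$ for each fixed $\varepsilon$ (the $e({\bf u}_1^\alpha-{\bf u}_2^\alpha)$ are linearly independent in $L^2(\Omega)$), one must check that no eigenvalue of $\tilde H$ collapses faster than the diagonal scale. I expect to obtain this by replacing ${\bf u}_i^\alpha$ by ${\bf v}_i^\alpha$ modulo the controlled error from Propositions \ref{propu11}--\ref{propu13}, and then computing the leading-order entries of $\tilde H$ explicitly from the Keller-type ansatz; parity in $x_1$ should produce enough cross-cancellation to keep the leading quadratic form genuinely non-degenerate in the rescaled limit.
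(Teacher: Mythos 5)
Your overall architecture is the same as the paper's: bound $|C_i^\alpha|$ by a trace/energy argument, then reduce to a $3\times3$ linear system for the differences whose coefficient matrix is a Gram matrix of strains, estimate its entries from Propositions \ref{propu11}--\ref{propu13}, and invert (the paper works with $a_{11}(C_1-C_2)=b_1-(a_{11}+a_{21})C_2$ rather than your symmetrized $H$, but these are equivalent in scale, and your reduction of the right-hand side to boundary integrals of bounded stresses is a legitimate variant of the paper's $|b_1^\beta|\le C$, $|a_{11}^{\alpha\beta}+a_{21}^{\alpha\beta}|\le C$). However, there is a genuine gap at the inversion step. With only the diagonal asymptotics $H_{11}\asymp H_{33}\asymp\varepsilon^{-1/2}$, $H_{22}\asymp\varepsilon^{-3/2}$, off-diagonals controlled by Cauchy--Schwarz (i.e.\ by $\varepsilon^{-1}$ for the $(1,2)$ and $(2,3)$ entries), and a uniform lower bound on the smallest eigenvalue of $\tilde H=D^{-1}HD^{-1}$, the inverse satisfies $(H^{-1})_{2j}=\varepsilon^{3/4}(\tilde H^{-1})_{2j}(D^{-1})_{jj}=O(\varepsilon)$ for $j=1,3$ (only the $(2,2)$ entry is $O(\varepsilon^{3/2})$), so $|R|\le C$ yields only $|C_1^2-C_2^2|\le C\varepsilon$, not the claimed $C\varepsilon^{3/2}$. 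The sharper bound requires that the off-diagonal entries coupling $\alpha=2$ to $\alpha=1,3$ be far below the Cauchy--Schwarz level; this is exactly the content of the paper's Lemma \ref{lema11}, $|a_{11}^{12}|,|a_{11}^{23}|\le C|\log\varepsilon|$ and $|a_{11}^{13}|\le C$, which is proved by an explicit cancellation: the leading products such as $\partial_{x_2}({\bf v}_1^1)^{(1)}\,\partial_{x_2}({\bf v}_1^2)^{(1)}=12x_1x_2/\delta^3(x_1)$ are odd in $x_1$ and integrate to zero, with the remainders estimated through Propositions \ref{propu11}--\ref{propu13}. You invoke parity only to argue non-degeneracy of $\tilde H$ in the limit, not to improve the off-diagonal size, so as written your scheme cannot reach $\varepsilon^{3/2}$ for $\alpha=2$.

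Second, the uniform positive-definiteness of $\tilde H$ (equivalently $\det\gtrsim\varepsilon^{-5/2}$), which you flag as the "main obstacle" and leave as an expectation, is precisely what the paper supplies: the two-sided diagonal bounds (whose lower halves are obtained from specific strain components of the explicit ${\bf v}_1^\alpha$, e.g.\ $\partial_{x_2}({\bf v}_1^1)^{(1)}=1/\delta$, not merely from $|\nabla{\bf u}_1^\alpha|\gtrsim1/\delta$, since a lower bound on the full gradient does not pointwise control $|e({\bf u})|$) together with the logarithmic off-diagonal bounds make the determinant comparable to the product of the diagonal entries, after which Cramer's rule gives all three difference estimates. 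So to complete your proof you must actually carry out the entrywise computation of the Gram matrix with the Keller-type ansatz, including the parity cancellations, rather than relying on Cauchy--Schwarz and an unproven eigenvalue bound.
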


\subsection{The Completion of the Proof of Theorem \ref{mainthm2D}}\label{prfthm}
Now let us complete the proof of Theorem \ref{mainthm2D}.
\begin{proof}[Proof of Theorem \ref{mainthm2D}]
We  rewrite \eqref{ud} as
\begin{equation*}
\nabla{{\bf u}}=\sum_{\alpha=1}^{3}\left(C_{1}^{\alpha}-C_{2}^{\alpha}\right)\nabla{\bf u}_{1}^{\alpha}
+\nabla {\bf u}_{b},~\mbox{and}~ 
p=\sum_{\alpha=1}^{3}\left(C_{1}^{\alpha}-C_{2}^{\alpha}\right)p_{1}^{\alpha}+p_{b},\quad\mbox{in}~\Omega,
\end{equation*}
where
\begin{equation*}
{\bf u}_{b}:=\sum_{\alpha=1}^{3}C_{2}^{\alpha}({\bf u}_{1}^{\alpha}+{\bf u}_{2}^{\alpha})+{\bf u}_{0},\quad p_{b}:=\sum_{\alpha=1}^{3}C_{2}^{\alpha}(p_{1}^{\alpha}+p_{2}^{\alpha})+p_{0}.
\end{equation*}

(i) By virtue of  Propositions  \ref{propu11}, and \ref{propu12}--\ref{lemCialpha}, we have
\begin{align}\label{upper-u2D}
|\nabla{{\bf u}}(x)|&\leq\sum_{\alpha=1}^{3}\left|(C_{1}^{\alpha}-C_{2}^{\alpha}\right)\nabla{\bf u}_{1}^{\alpha}(x)|+C\nonumber\\
&\leq\,\frac{C\sqrt{\varepsilon}}{\delta(x_{1})}+C\varepsilon^{3/2}\left(\frac{1}{\delta(x_1)}+\frac{|x_1|}{\delta^2(x_{1})}\right)+\frac{C\sqrt{\varepsilon}}{\delta(x_{1})}\leq C\frac{\sqrt{\varepsilon}+|x_1|}{\varepsilon+x_1^2}.
\end{align}
For $|p|$, we first take $q_{R}=\sum_{\alpha=1}^{3}(C_1^\alpha-C_2^\alpha)(q_{1}^\alpha)_{R}$, where $(q_{1}^\alpha)_{R}$ is defined in \eqref{defqialpha}. Then for any $x\in\Omega_R$, by using Propositions \ref{propu11},  \ref{propu12}--\ref{lemCialpha}, we derive 
\begin{align}\label{upper-p2D}
|p(x)-q_{R}|&\leq\sum_{\alpha=1}^{3}\left|(C_{1}^{\alpha}-C_{2}^{\alpha}\right)(p_{1}^{\alpha}(x)-(q_{1}^\alpha)_{R})|+C\nonumber\\
&\leq\,\frac{C\sqrt{\varepsilon}}{\varepsilon}+\frac{C\varepsilon^{3/2}}{\varepsilon^2}+\frac{C\sqrt{\varepsilon}}{\varepsilon^{3/2}}\leq \frac{C}{\varepsilon}.
\end{align}

(ii) In view of Propositions  \ref{propu11}, and \ref{propu12}--\ref{lemCialpha} again, we obtain 
\begin{align*}
|\nabla^2{{\bf u}}(x)|&\leq\sum_{\alpha=1}^{3}\left|(C_{1}^{\alpha}-C_{2}^{\alpha}\right)\nabla^2{\bf u}_{1}^{\alpha}(x)|+C\nonumber\\
&\leq\,\frac{C\sqrt{\varepsilon}|x_1|}{\delta^2(x_{1})}+C\varepsilon^{3/2}\left(\frac{1}{\delta^2(x_1)}+\frac{|x_1|}{\delta^3(x_{1})}\right)+\frac{C\sqrt{\varepsilon}}{\delta^2(x_{1})}\leq \frac{C}{(\varepsilon+x_1^2)^{3/2}}.
\end{align*}
Similarly, 
\begin{align*}
|\nabla p(x)|&\leq\sum_{\alpha=1}^{3}\left|(C_{1}^{\alpha}-C_{2}^{\alpha}\right)\nabla p_{1}^{\alpha}(x)|+C\leq \frac{C}{(\varepsilon+x_1^2)^{3/2}}.
\end{align*}
Thus, Theorem \ref{mainthm2D} is proved.
\end{proof}

\section{Preliminaries}\label{sec2}

In this section we recall and prove some basic results about Stokes systems. The results are concerning the energy estimates, $W^{2,\infty}$ estimates and a Caccioppoli-type inequality, which are needed to apply the iteration technique in the subsequent Sections.

\subsection{Some Basic Results}

A basic result for the theoretical and numerical analysis of the Stokes systems in a bounded domain $\Omega\subset\mathbb{R}^{d}$ is the existence of a continuous right inverse of the divergence as an operator from the Sobolev space $H_0^{1}(\Omega;\mathbb R^d)$ into the space $L_{0}^2(\Omega)$ of functions in $L^2(\Omega)$ with vanishing mean value, where $d\geq2$. For a Lipschitz domain $\Omega$, this result is proved by employing compactness arguments, see, for example, \cite{T1984}.  
\begin{lemma}\label{lemmaf0}
Let $\Omega$ be a bounded Lipschitz domain in $\mathbb R^d$. Given $f\in L^2(\Omega)$ with $\int_{\Omega}f=0$. Then there exists a function ${\boldsymbol\phi}\in H_0^{1}(\Omega;\mathbb R^d)$ such that
$$\Div{\boldsymbol\phi}=f\quad\mbox{in}~\Omega,$$
and 
\begin{equation}\label{estphif}
\|{\boldsymbol\phi}\|_{H_{0}^{1}(\Omega)}\leq C\|f\|_{L^2(\Omega)},
\end{equation}
where $C>0$ is a constant depending only on $d$ and $\mathrm{diam}(\Omega)$.
\end{lemma}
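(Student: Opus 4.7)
The plan is to prove this classical result via Bogovskii's explicit construction, in two stages: first for domains star-shaped with respect to an interior ball, then extending to arbitrary bounded Lipschitz $\Omega$ by a partition-of-unity argument.

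For the star-shaped case, suppose $\Omega$ is star-shaped with respect to a ball $B \Subset \Omega$, and fix $\omega \in C_c^\infty(B)$ with $\omega \geq 0$ and $\int \omega = 1$. For $f \in C_c^\infty(\Omega)$ with $\int f = 0$, define
\[
\boldsymbol\phi(x) := \int_\Omega f(y)\,G(x,y)\,dy, \qquad G(x,y) := (x-y)\int_1^\infty \omega\bigl(y+t(x-y)\bigr)\,t^{d-1}\,dt.
\]
A direct computation exploiting the mean-zero condition yields $\Div \boldsymbol\phi = f$, and the star-shapedness together with the support of $\omega$ guarantees $\boldsymbol\phi \in C_c^\infty(\Omega;\mathbb{R}^d)$. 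The key mapping estimate $\|\nabla \boldsymbol\phi\|_{L^2(\Omega)} \leq C \|f\|_{L^2(\Omega)}$ is where the work lies: $\nabla_x G(x,y)$ has the homogeneity $|x-y|^{-d}$ of a Riesz-type kernel, so after separating a principal value operator from a weakly singular remainder one invokes Calderón--Zygmund $L^2$ theory. Combined with Poincar\'e's inequality this gives $\|\boldsymbol\phi\|_{H_0^1(\Omega)} \leq C\|f\|_{L^2}$, and a density argument extends the construction to all $f \in L_0^2(\Omega)$.

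For a general bounded Lipschitz $\Omega$, cover $\overline\Omega$ by finitely many open sets $\{U_j\}_{j=0}^N$ with $U_0 \Subset \Omega$ and each $U_j\cap \Omega$ ($j\geq 1$) a Lipschitz subdomain star-shaped with respect to a small interior ball; such a cover exists from the local Lipschitz-graph representation of $\partial\Omega$. Subordinate to this cover, choose a partition of unity $\{\eta_j\}$ and fix a bump $\beta \in C_c^\infty(U_0)$ with $\int \beta = 1$. Decompose
\[
f = \sum_{j=0}^N f_j, \qquad f_j := \eta_j f - c_j \beta, \quad c_j := \int_\Omega \eta_j f,
\]
so that each $f_j$ has zero mean on its natural patch, $\mathrm{supp}(f_j) \subset (U_j \cap \Omega) \cup U_0$, and $\sum_j c_j = \int f = 0$ ensures that the $\beta$-corrections cancel globally. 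Applying the star-shaped result to $f_j$ on the corresponding patch yields $\boldsymbol\phi_j \in H_0^1(\Omega;\mathbb{R}^d)$ vanishing outside that patch, and assembling $\boldsymbol\phi := \sum_j \boldsymbol\phi_j$ gives the desired function with the claimed $H^1_0$-estimate, since the finite sum costs only a factor depending on the cover.

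The main obstacle is the $L^2$-boundedness of $\nabla \boldsymbol\phi$ in the star-shaped step. The kernel $\nabla_x G(x,y)$ carries nontrivial dependence on both variables through $\omega$, and one must verify the Calder\'on--Zygmund size, smoothness, and cancellation hypotheses (or, equivalently, carry out the Bogovskii splitting into a principal-value operator plus a lower-order integral operator with weakly singular kernel controlled by pointwise estimates) in order to quote classical singular-integral $L^2$ theory. Once that is in hand, the patching in the second stage is essentially bookkeeping, provided the constants $c_j$ are arranged so that each $f_j$ genuinely has zero mean on the star-shaped patch where it is solved.
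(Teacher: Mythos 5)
The paper itself does not prove this lemma: it simply cites Temam \cite{T1984}, remarking that for a Lipschitz domain the result follows from compactness (functional-analytic) arguments. So your constructive route via Bogovskii's operator is a genuinely different, and in principle perfectly acceptable, way to establish it; the star-shaped step you outline (kernel $G$, $\Div\boldsymbol\phi=f$ from the mean-zero condition, Calder\'on--Zygmund $L^2$ bound for $\nabla_x G$, density) is the standard argument and is fine as a sketch, granted that you are deferring the one genuinely technical point (verifying the CZ hypotheses for the Bogovskii kernel) to the literature.

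There is, however, a concrete gap in your patching step. With $f_j:=\eta_j f-c_j\beta$ and a single bump $\beta$ supported in the interior set $U_0$, the piece $f_j$ for $j\ge 1$ is supported in $(U_j\cap\Omega)\cup U_0$, which is in general a disconnected set (a boundary patch plus a separate interior blob) and is certainly not star-shaped with respect to a ball; moreover $\eta_j f$ alone does \emph{not} have zero mean on $U_j\cap\Omega$, since its correction $c_j\beta$ lives elsewhere. Hence the sentence ``applying the star-shaped result to $f_j$ on the corresponding patch'' is not justified as written: there is no single star-shaped patch on which $f_j$ is supported and mean-zero. The standard repair (Bogovskii; Galdi \cite{G2011}, Thm.~III.3.1 and the accompanying decomposition lemma) is to write $\Omega=\bigcup_{k=1}^N\Omega_k$ with each $\Omega_k$ star-shaped with respect to a ball and to build the decomposition $f=\sum f_k$ \emph{inductively}, placing each correction in the overlap $\Omega_k\cap\bigl(\bigcup_{l>k}\Omega_l\bigr)$ rather than in a fixed interior set; this uses the connectedness of $\Omega$ (implicit in the word ``domain''), and indeed the lemma is false for disconnected $\Omega$, so connectedness must enter exactly at this point. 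A minor further remark: the constant produced by this construction depends on the Lipschitz character of $\Omega$ (number of patches and the ratios of their diameters to the star-shapedness radii), not merely on $d$ and $\mathrm{diam}(\Omega)$; this looseness is already present in the statement as given in the paper, so it is not your error, but your final sentence about ``a factor depending on the cover'' should acknowledge it.
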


By virtue of Lemma \ref{lemmaf0}, we have the following estimate for the pressure term $q$.

\begin{lemma}\label{corowq}
Let $({\bf w},q)$ be the solution to 
\begin{equation}\label{eqbfwq}
-\mu\Delta{\bf w}+\nabla q={\bf g},\quad\mbox{in}~\Omega,
\end{equation}
where ${\bf g}\in L^2(\Omega;\mathbb R^d)$. Then
\begin{align}\label{qL2}
\int_{\Omega}|q-q_{_\Omega}|^2\mathrm{d}x\leq C\int_{\Omega}|\nabla{\bf w}|^2\mathrm{d}x+C\int_{\Omega}|{\bf g}|^2\mathrm{d}x,
\end{align}
where $q_{_\Omega}:=\frac{1}{|\Omega|}\int_{\Omega}q\mathrm{d}x$, and $C>0$ is a constant depending only on $d,\mu$ and $\mathrm{diam}(\Omega)$.
\end{lemma}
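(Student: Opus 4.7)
The plan is to prove this by a standard duality argument using the Bogovskii-type right inverse of the divergence provided by Lemma \ref{lemmaf0}. Since $q - q_\Omega$ has mean zero on $\Omega$ by construction, I would first apply Lemma \ref{lemmaf0} with $f := q - q_\Omega$ to obtain a vector field ${\boldsymbol\phi} \in H_0^1(\Omega;\mathbb R^d)$ satisfying $\Div {\boldsymbol\phi} = q - q_\Omega$ in $\Omega$ together with the bound $\|{\boldsymbol\phi}\|_{H_0^1(\Omega)} \leq C \|q - q_\Omega\|_{L^2(\Omega)}$. This ${\boldsymbol\phi}$ serves as the dual test function that converts an $L^2$ norm of $q - q_\Omega$ into a pairing with $\nabla q$.

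Next, I would multiply the equation \eqref{eqbfwq} by ${\boldsymbol\phi}$ and integrate by parts. Since ${\boldsymbol\phi}$ vanishes on $\partial\Omega$, there are no boundary terms, yielding
\begin{equation*}
\mu \int_\Omega \nabla {\bf w} : \nabla {\boldsymbol\phi} \, \mathrm{d}x - \int_\Omega q \, \Div {\boldsymbol\phi} \, \mathrm{d}x = \int_\Omega {\bf g} \cdot {\boldsymbol\phi} \, \mathrm{d}x.
\end{equation*}
Noting that $\int_\Omega q_\Omega \Div {\boldsymbol\phi} \, \mathrm{d}x = q_\Omega \int_{\partial\Omega} {\boldsymbol\phi}\cdot\nu \, \mathrm{d}S = 0$, I can replace $q$ by $q - q_\Omega$ in the divergence term and substitute $\Div {\boldsymbol\phi} = q - q_\Omega$ to arrive at
\begin{equation*}
\int_\Omega |q - q_\Omega|^2 \, \mathrm{d}x = \mu \int_\Omega \nabla {\bf w} : \nabla {\boldsymbol\phi} \, \mathrm{d}x - \int_\Omega {\bf g} \cdot {\boldsymbol\phi} \, \mathrm{d}x.
\end{equation*}

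From here it is routine: applying Cauchy--Schwarz to both terms on the right, together with the Poincar\'e inequality $\|{\boldsymbol\phi}\|_{L^2(\Omega)} \leq C \|\nabla {\boldsymbol\phi}\|_{L^2(\Omega)}$ (valid because ${\boldsymbol\phi} \in H_0^1$) and the control \eqref{estphif}, I obtain
\begin{equation*}
\|q - q_\Omega\|_{L^2(\Omega)}^2 \leq C\bigl(\|\nabla {\bf w}\|_{L^2(\Omega)} + \|{\bf g}\|_{L^2(\Omega)}\bigr)\|q - q_\Omega\|_{L^2(\Omega)}.
\end{equation*}
Dividing through by $\|q - q_\Omega\|_{L^2(\Omega)}$ (the case where it vanishes is trivial) and squaring gives \eqref{qL2}.

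There is no real obstacle here; the proof is a clean application of Lemma \ref{lemmaf0} plus integration by parts. The only place where one must be slightly careful is the justification of the integration by parts when ${\bf w}$ and $q$ are only assumed to satisfy \eqref{eqbfwq} in the weak sense; this can be handled by approximation or by taking the equation in its natural variational formulation, and the constant $C$ in the final estimate depends only on $d$, $\mu$, and $\mathrm{diam}(\Omega)$ through the Poincar\'e constant and the constant supplied by Lemma \ref{lemmaf0}.
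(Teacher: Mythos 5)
Your proof is correct and follows essentially the same route as the paper: test the equation with the Bogovskii field ${\boldsymbol\phi}$ from Lemma \ref{lemmaf0} solving $\Div{\boldsymbol\phi}=q-q_{_\Omega}$, integrate by parts, and control the right-hand side using \eqref{estphif} and Poincar\'e. The only cosmetic difference is that you conclude by Cauchy--Schwarz and dividing by $\|q-q_{_\Omega}\|_{L^2(\Omega)}$, whereas the paper absorbs via Young's inequality; both yield \eqref{qL2} with the same dependence of $C$.
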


\begin{proof}
By Lemma \ref{lemmaf0}, there exists a function ${\boldsymbol\phi}\in H_0^{1}(\Omega;\mathbb R^d)$ such that
$$\Div{\boldsymbol\phi}=q-q_{_\Omega},\quad\mbox{in}~\Omega,$$
with estimate
\begin{equation}\label{estphiq}
\|{\boldsymbol\phi}\|_{H_{0}^{1}(\Omega)}^2\leq C_0\|q-q_{_\Omega}\|_{L^2(\Omega)}^2,
\end{equation}
where the constant $C_{0}$ is fixed now. Now we use this ${\boldsymbol\phi}$ as a test function to \eqref{eqbfwq}, and apply the integration by parts, 
\begin{equation*}
\mu\int_{\Omega}\nabla{\bf w}\cdot\nabla{\boldsymbol\phi}\mathrm{d}x-\int_{\Omega}|q-q_{_\Omega}|^2\mathrm{d}x=\int_{\Omega}{\bf g}{\boldsymbol\phi}\mathrm{d}x.
\end{equation*}
Employing Young's inequality and taking into account \eqref{estphiq}, we have
\begin{align*}
\int_{\Omega}|q-q_{_\Omega}|^2\mathrm{d}x&\leq \frac{1}{2C_0}\int_{\Omega}|\nabla{\boldsymbol\phi}|^2\mathrm{d}x+C\int_{\Omega}|\nabla{\bf w}|^2\mathrm{d}x+\frac{1}{2C_0}\int_{\Omega}|{\boldsymbol\phi}|^2\mathrm{d}x+C\int_{\Omega}|{\bf g}|^2\mathrm{d}x\\
&\leq \frac{1}{2}\int_{\Omega}|q-q_{_\Omega}|^2\mathrm{d}x+C\int_{\Omega}|\nabla{\bf w}|^2\mathrm{d}x+C\int_{\Omega}|{\bf g}|^2\mathrm{d}x,
\end{align*}
which implies \eqref{qL2}. The proof is completed.
\end{proof}

An immediate consequence of Lemma \ref{corowq} is obtained by a rescaling argument.
\begin{corollary}\label{rmkmean}
If $\Omega$ is a ball $B_r$, then estimate \eqref{estphif} becomes
\begin{equation*}
\|{\boldsymbol\phi}\|_{L^{2}(B_r)}+r\|\nabla{\boldsymbol\phi}\|_{L^{2}(B_r)}\leq Cr\|f\|_{L^2(B_r)},
\end{equation*}
and \eqref{qL2} becomes
\begin{align*}
\int_{B_r}|q-q_{_{B_r}}|^2\mathrm{d}x\leq C\int_{B_r}|\nabla{\bf w}|^2\mathrm{d}x+Cr^2\int_{B_r}|{\bf g}|^2\mathrm{d}x,
\end{align*}
where $C$ is independent of $r$.
\end{corollary}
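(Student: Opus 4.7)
The plan is to deduce both inequalities from the corresponding statements on the unit ball $B_1$ (where Lemma \ref{lemmaf0} and Lemma \ref{corowq} apply with a universal constant, since $\mathrm{diam}(B_1)=2$) by a standard dimensional rescaling $y=x/r$. The only real issue is to choose the right powers of $r$ in the definitions of the rescaled unknowns so that (a) the divergence equation and the Stokes equation are preserved, and (b) the pressure mean values transform compatibly.

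For the first estimate, given $f\in L^2(B_r)$ with zero mean, I would set $\tilde f(y):=f(ry)$, which still has zero mean on $B_1$, and apply Lemma \ref{lemmaf0} on $B_1$ to obtain $\tilde{\boldsymbol\phi}\in H_0^1(B_1;\mathbb R^d)$ with $\Div_y\tilde{\boldsymbol\phi}=\tilde f$ and $\|\tilde{\boldsymbol\phi}\|_{H^1_0(B_1)}\le C\|\tilde f\|_{L^2(B_1)}$. Then I would define ${\boldsymbol\phi}(x):=r\,\tilde{\boldsymbol\phi}(x/r)$, so that $\Div_x{\boldsymbol\phi}(x)=\tilde f(x/r)=f(x)$ and ${\boldsymbol\phi}\in H^1_0(B_r;\mathbb R^d)$. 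Changing variables gives $\|{\boldsymbol\phi}\|_{L^2(B_r)}=r^{1+d/2}\|\tilde{\boldsymbol\phi}\|_{L^2(B_1)}$, $\|\nabla{\boldsymbol\phi}\|_{L^2(B_r)}=r^{d/2}\|\nabla\tilde{\boldsymbol\phi}\|_{L^2(B_1)}$, and $\|f\|_{L^2(B_r)}=r^{d/2}\|\tilde f\|_{L^2(B_1)}$. Combining with the $B_1$-estimate and the Poincar\'e inequality on $B_1$ yields the two scaled bounds, which when added produce the claim $\|{\boldsymbol\phi}\|_{L^2(B_r)}+r\|\nabla{\boldsymbol\phi}\|_{L^2(B_r)}\le Cr\|f\|_{L^2(B_r)}$.

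For the second estimate, I would use exactly the same dilation, introducing $\tilde{\bf w}(y):={\bf w}(ry)/r$, $\tilde q(y):=q(ry)$, $\tilde{\bf g}(y):=r\,{\bf g}(ry)$. A direct computation shows $-\mu\Delta_y\tilde{\bf w}+\nabla_y\tilde q=\tilde{\bf g}$ in $B_1$, so Lemma \ref{corowq} applied on $B_1$ gives
\[
\int_{B_1}|\tilde q-\tilde q_{_{B_1}}|^2\,dy \le C\int_{B_1}|\nabla_y\tilde{\bf w}|^2\,dy+C\int_{B_1}|\tilde{\bf g}|^2\,dy.
\]
The key small observation is that $\tilde q_{_{B_1}}=q_{_{B_r}}$ because averaging commutes with the substitution $x=ry$, so subtracting the mean is preserved under scaling. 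Converting the three integrals back to $B_r$ produces factors $r^{-d}$, $r^{-d}$, and $r^{2-d}$, respectively; multiplying through by $r^d$ yields the asserted bound with the explicit $r^2$ weight on the $\|{\bf g}\|_{L^2(B_r)}^2$ term.

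This really is a bookkeeping exercise rather than a place where a serious obstacle arises; the only point that deserves care is keeping track of the scaling exponents consistently, in particular the choice $\tilde{\bf w}={\bf w}(ry)/r$ (rather than just ${\bf w}(ry)$), which is what forces $\tilde{\bf g}=r\,{\bf g}(ry)$ and eventually produces the crucial $r^2$ in front of $\int_{B_r}|{\bf g}|^2$. Once this scaling is fixed and the mean-value compatibility is noted, the corollary follows immediately from Lemmas \ref{lemmaf0} and \ref{corowq} on $B_1$, with a constant depending only on $d$ and $\mu$.
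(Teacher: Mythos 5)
Your proposal is correct and follows essentially the same route as the paper: rescale to the unit ball, apply Lemmas \ref{lemmaf0} and \ref{corowq} there, and scale back, tracking powers of $r$. Your normalization $\tilde{\bf w}={\bf w}(ry)/r$, $\tilde q=q(ry)$, $\tilde{\bf g}=r\,{\bf g}(ry)$ differs from the paper's choice $\tilde{\bf w}={\bf w}(ry)$, $\tilde q=rq(ry)$, $\tilde{\bf g}=r^{2}{\bf g}(ry)$ only by an overall factor of $1/r$, which is immaterial, and your treatment of the mean values and of the first (divergence) estimate is a correct spelling-out of what the paper leaves implicit.
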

\begin{proof}
Indeed, if $({\bf w},q)$ satisfies
$$-\mu\Delta{\bf w}+\nabla q={\bf g},\quad\mbox{in}~B_{r},
$$
then by rescaling, $(\tilde{\bf w},\tilde{q})$ satisfies
\begin{equation}\label{tilde_w}
-\mu\Delta\tilde{\bf w}+\nabla \tilde{q}=\tilde{\bf g},\quad\mbox{in}~B_{1},
\end{equation}
where
$$\tilde{\bf w}(y)={\bf w}(ry),\quad\tilde{q}(y)=rq(ry),\quad\,\tilde{\bf g}(y)=r^{2}{\bf g}(ry).$$
Employing \eqref{qL2} for \eqref{tilde_w} completes the proof.
\end{proof}

Before proceeding to our result, we must recall another well-known result for Stokes system. The following $L^{q}$-estimate for Stokes flow in a bounded domain, with partially vanishing boundary data, can be found in \cite[Theorem IV.5.1]{G2011}.

\begin{theorem}\label{thmWmq}
Let $\Omega$ be an arbitrary domain in $\mathbb R^d$, $d\geq2$, with a boundary portion $\sigma$ of class $C^{m+2}$, $m\geq0$. Let $\Omega_0$ be any bounded subdomain of $\Omega$ with $\partial\Omega_0\cap\partial\Omega=\sigma$. Further, let
\begin{align*}
{\bf u}\in W^{1,q}(\Omega_0), \quad p\in L^q(\Omega_0),\quad 1<q<\infty,
\end{align*}
be such that
\begin{align*}
(\nabla{\bf u},\nabla{\boldsymbol\psi})&=-\langle{\bf f},{\boldsymbol\psi}\rangle+(p,\nabla\cdot{\boldsymbol\psi}),\quad\mbox{for~all}~{\boldsymbol\psi}\in C_0^\infty(\Omega_0),\\
({\bf u},\nabla\varphi)&=0,\quad \mbox{for~all}~\varphi\in C_0^\infty(\Omega_0),\\
{\bf u}&=0,\quad \mbox{at}~\sigma.
\end{align*}
Then, if ${\bf f}\in W^{m,q}(\Omega_0)$, we have
$${\bf u}\in W^{m+2,q}(\Omega'),\quad p\in W^{m+1,q}(\Omega'),$$
for any $\Omega'$ satisfying 
\begin{enumerate}
\item $\Omega'\subset\Omega$,
\item $\partial\Omega'\cap\partial\Omega$ is a strictly interior subregion of $\sigma$.
\end{enumerate}
Finally, the following estimate holds
\begin{align*}
\|{\bf u}\|_{W^{m+2,q}(\Omega')}+\|p\|_{W^{m+1,q}(\Omega')}\leq C\left(\|{\bf f}\|_{W^{m,q}(\Omega_0)}+\|{\bf u}\|_{W^{1,q}(\Omega_0)}+\|p\|_{L^{q}(\Omega_0)}\right),
\end{align*}
where $C=C(d,m,q,\Omega',\Omega_0)$.
\end{theorem}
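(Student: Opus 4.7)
The plan is to establish the estimate in two stages (interior and boundary), then bootstrap on $m$. First I would fix a smooth cutoff $\eta \in C_0^\infty(\Omega_0)$ with $\eta \equiv 1$ on $\Omega'$, and set $(\tilde{\bf u},\tilde p):=(\eta{\bf u},\eta p)$. A direct computation shows that $(\tilde{\bf u},\tilde p)$ satisfies a Stokes system
\[
-\Delta\tilde{\bf u} + \nabla \tilde p = \tilde{\bf f}, \qquad \nabla\cdot \tilde{\bf u} = \tilde g,
\]
where $\tilde{\bf f}$ involves ${\bf f},\nabla{\bf u},{\bf u},p$ and $\tilde g=\nabla\eta\cdot{\bf u}$, both of which gain a derivative's worth of regularity compared to ${\bf u},p$ themselves thanks to the cutoff. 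Since $\tilde{\bf u}$ vanishes identically near $\partial\Omega_0\setminus\sigma$ and on $\sigma$, the analysis reduces to the purely boundary-free half-space problem (after flattening $\sigma$) plus interior pieces covered by a finite partition of unity.

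Next, for the base case $m=0$, I would cover $\Omega'$ by finitely many balls $B_r(x_0)$ such that either $B_{2r}(x_0)\subset\Omega_0$ (interior) or $x_0\in\sigma$ and $B_{2r}(x_0)\cap\partial\Omega\subset\sigma$ (boundary). In the interior case, standard Stokes theory (Calder\'on--Zygmund estimates applied to the Green tensor $G_{ij}$ and pressure kernel $Q_j$ for the whole space) gives
\[
\|\nabla^2{\bf u}\|_{L^q(B_r)} + \|\nabla p\|_{L^q(B_r)} \leq C\bigl(\|{\bf f}\|_{L^q(B_{2r})} + \|{\bf u}\|_{W^{1,q}(B_{2r})} + \|p\|_{L^q(B_{2r})}\bigr).
\]
For the boundary case, after a $C^{m+2}$ flattening $\Phi$ of $\sigma$, the transformed system has $C^{m+1}$ coefficients of the form $a^{ij}\partial_{ij}\tilde{\bf u} + b^i\partial_i\tilde{\bf u} + \nabla\tilde p = \tilde{\bf f}$ with $\tilde{\bf u}|_{\{y_d=0\}}=0$. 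On the half-space, the Stokes Green tensor of Solonnikov yields the same Calder\'on--Zygmund estimate; the variable-coefficient perturbation is absorbed into the right-hand side via a standard Korn/freezing-of-coefficients argument, finalized by the hypothesis that the coefficients are uniformly continuous.

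For the induction step $m\geq 1$, I would apply tangential difference quotients $D_h^\tau$ (in directions parallel to $\sigma$, or freely in the interior) to the equations, observe that the commutator with the flattened Stokes operator is lower order, and invoke the $m=0$ estimate for $(D_h^\tau \tilde{\bf u}, D_h^\tau \tilde p)$. This yields control of tangential $(m+1)$-th derivatives of ${\bf u}$ and $m$-th derivatives of $p$. The missing normal derivatives are then recovered algebraically: one normal derivative of ${\bf u}$ is extracted from the divergence-free condition $\partial_d u^d = -\sum_{i<d}\partial_i u^i$, two normal derivatives of ${\bf u}$ and the normal derivative of $p$ from the momentum equation $\mu\partial_{dd}{\bf u} = -{\bf f} + \nabla p -\mu\sum_{i<d}\partial_{ii}{\bf u}$ combined with its component perpendicular to $e_d$.

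The main obstacle is the pressure: since $p$ has no prescribed boundary condition on $\sigma$, one cannot treat it as a standard elliptic unknown. The crucial trick is exactly the divergence-equation argument that underlies Lemma \ref{corowq}: given the velocity estimate, $p$ is controlled up to an additive constant by testing the momentum equation against a Bogovski\u\i{}-type solution ${\boldsymbol\phi}\in H_0^1$ of $\Div{\boldsymbol\phi} = p - p_{\Omega'}$ (Lemma \ref{lemmaf0}), which converts $L^q$ estimates on $p$ into $L^q$ estimates on ${\bf f}$, $\nabla{\bf u}$ and $p$ itself (absorbed). Combining the interior and boundary contributions via the partition of unity and summing over the covering produces the claimed inequality.
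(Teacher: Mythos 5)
You should know that the paper does not prove this statement at all: it is quoted verbatim as a known result, with the proof delegated to Galdi \cite[Theorem IV.5.1]{G2011}, so there is no internal argument to compare yours against. Your sketch is, in substance, the standard proof of that theorem (and essentially the one carried out in Galdi's book): localize with a cutoff, treat interior balls by the whole-space Stokes estimates, flatten $\sigma$ and use the half-space estimates of Solonnikov/Cattabriga with a freezing-of-coefficients perturbation argument, run tangential difference quotients for the induction on $m$, recover normal derivatives algebraically from $\nabla\cdot{\bf u}=0$ and the momentum equation, and control the pressure through the divergence (Bogovski\u{\i}) construction. So the approach is correct and appropriate.

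Two points where the outline is thinner than a complete proof. First, after the cutoff the velocity $\eta{\bf u}$ is no longer solenoidal, so the interior and half-space estimates you invoke must be the versions for the Stokes problem with \emph{prescribed nonzero divergence} (or one must correct the velocity itself by a Bogovski\u{\i} solution of $\Div{\boldsymbol\phi}=\nabla\eta\cdot{\bf u}$); your text only deploys the Bogovski\u{\i} argument for the pressure, which by itself does not fix the divergence mismatch in the localized system. Second, the hypotheses are very weak (${\bf u}\in W^{1,q}$, $p\in L^q$, and the equation holds only against $C_0^\infty$ test fields), so the difference-quotient and integration-by-parts manipulations, and the interpretation of ${\bf u}=0$ on $\sigma$ in the trace sense, require the usual justification at this regularity level before the bootstrap can start. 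Neither point is a conceptual obstruction; both are handled in the cited reference, which is why the paper simply cites it.
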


Now we consider the domain 
\begin{equation}\label{defQ}
Q_{r}=\left\{y\in\mathbb{R}^{d}~\Big|~-\frac{\varepsilon}{2\delta}-\frac{1}{\delta}h_{2}(\delta\,y'+z')<y_{d}
<\frac{\varepsilon}{2\delta}+\frac{1}{\delta}h_{1}(\delta\,y'+z'),~|y'|<r\right\},
\end{equation}
where $\delta=\delta(z')=\varepsilon+|z'|^{2}$ and $h_{1}(x')=h_{2}(x')=\frac{1}{2}|x'|^{2}$. Then $Q_{1}$ has a nearly unit size.  Denote its top and bottom boundaries by $\hat\Gamma_{1}^{+}$ and $\hat\Gamma_{1}^{-}$. Applying a bootstrap argument yields the following Proposition from Theorem \ref{thmWmq}.

\begin{prop}\label{lemWG}
Let $q\in(1,\infty)$ and let ${\mathcal W}\in W^{1,q}(Q_1)$ and $\mathcal{G}\in L^q(Q_1)$ be a weak solution of
\begin{align}\label{eqWG}
\begin{cases}
-\nabla\cdot\sigma[{\mathcal W},\mathcal{G}]=\mathcal{F},\quad&\mathrm{in}\,Q_1\\
\nabla\cdot {{\mathcal W}}=0,\quad&\mathrm{in}\,Q_1,\\
{{\mathcal W}}=0,\quad&\mathrm{on}\,\sigma:=\hat\Gamma_{1}^{+}\cup\hat\Gamma_{1}^{-}.
\end{cases}
\end{align}
Then if $\mathcal{F}\in L^\infty(Q_1)$, we have 
\begin{align}\label{energy0Q1/2}
\|\nabla\mathcal{W}\|_{L^{\infty}(Q_{1/2})}+\|\mathcal{G}-\mathcal{G}_{1}\|_{L^{\infty}(Q_{1/2})}\leq C\big(\|\mathcal{F}\|_{L^{\infty}(Q_1)}+\|\nabla\mathcal{W}\|_{L^{2}(Q_1)}\big),
\end{align}
where $\mathcal{G}_{1}=\frac{1}{|Q_1|}\int_{Q_1}\mathcal{G}$; and for $m\geq1$, 
\begin{align}\label{energyQ1/2}
\|\nabla^{m+1}\mathcal{W}\|_{L^{\infty}(Q_{1/2})}+\|\nabla^{m}\mathcal{G}\|_{L^{\infty}(Q_{1/2})}\leq C\left(\sum_{j=0}^{m}\|\nabla^{j}\mathcal{F}\|_{L^{\infty}(Q_1)}+\|\nabla\mathcal{W}\|_{L^{2}(Q_1)}\right).
\end{align}
\end{prop}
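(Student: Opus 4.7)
The plan is to deduce both estimates from the general $W^{m+2,q}$ boundary regularity result Theorem \ref{thmWmq} by a bootstrap argument on a nested family of subdomains. First, I would \emph{normalize} the pressure so that $\mathcal{G}_1 = 0$; this is legitimate because adding a constant to $\mathcal{G}$ leaves \eqref{eqWG} unchanged. With this normalization, Lemma \ref{corowq} (applied to $Q_1$, which has comparable-to-unit diameter) gives the a priori bound
$$\|\mathcal{G}\|_{L^2(Q_1)} \leq C\bigl(\|\nabla \mathcal{W}\|_{L^2(Q_1)} + \|\mathcal{F}\|_{L^2(Q_1)}\bigr),$$
so $\mathcal{G}\in L^2(Q_1)$ is automatic once $\mathcal{F}\in L^\infty(Q_1)$ and $\nabla\mathcal{W}\in L^2(Q_1)$. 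Note also that the curves defining $\sigma$ are smooth (since $h_1=h_2=\tfrac{1}{2}|x'|^2$ after the rescaling), and that $\partial Q_{1/2}\cap\partial Q_1$ is a strictly interior subregion of $\sigma$, so all the structural hypotheses of Theorem \ref{thmWmq} are met.

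Next I would choose a chain of intermediate slabs $Q_{1/2}=Q_{r_\infty}\subsetneq\cdots\subsetneq Q_{r_1}\subsetneq Q_{r_0}=Q_1$ with $r_k\downarrow 1/2$, and iteratively apply Theorem \ref{thmWmq} with $m=0$. A first application on the pair $(Q_{r_1},Q_1)$ with $q=2$ yields $\mathcal{W}\in W^{2,2}(Q_{r_1})$ and $\mathcal{G}\in W^{1,2}(Q_{r_1})$, with bound controlled by $\|\mathcal{F}\|_{L^\infty(Q_1)}+\|\nabla\mathcal{W}\|_{L^2(Q_1)}+\|\mathcal{G}\|_{L^2(Q_1)}$, hence (by the pressure bound above) by $\|\mathcal{F}\|_{L^\infty(Q_1)}+\|\nabla\mathcal{W}\|_{L^2(Q_1)}$. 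Sobolev embedding then upgrades the integrability of $\nabla\mathcal{W}$ and $\mathcal{G}$ on $Q_{r_1}$ from $L^2$ to some $L^{q_1}$ with $q_1>2$. Feeding this back into Theorem \ref{thmWmq} on the pair $(Q_{r_2},Q_{r_1})$ with exponent $q_1$ improves integrability further, and after finitely many steps the exponent surpasses $d$. Morrey's inequality then produces an $L^\infty$ bound for $\nabla\mathcal{W}$ and $\mathcal{G}$ on $Q_{1/2}$, which after undoing the normalization is exactly \eqref{energy0Q1/2}.

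The higher-order estimate \eqref{energyQ1/2} is obtained by the same bootstrap, but invoking Theorem \ref{thmWmq} with the parameter $m$. Since $\nabla^j\mathcal{F}\in L^\infty(Q_1)$ for $j\leq m$ by hypothesis, once the zeroth-order bootstrap has delivered $\mathcal{W}\in W^{1,q}(Q_{r_k})$ and $\mathcal{G}\in L^q(Q_{r_k})$ for some very large $q$, one more application of the theorem with that $m$ gives $\mathcal{W}\in W^{m+2,q}$ and $\mathcal{G}\in W^{m+1,q}$ on a slightly smaller slab, and Morrey once more upgrades this to the $L^\infty$ statement. The main piece of bookkeeping, which I expect to be the only real subtlety, is the pressure: Theorem \ref{thmWmq} controls $\|p\|_{L^q}$, not the pressure modulo constants, so the normalization $\mathcal{G}_1=0$ must be fixed at the very beginning and propagated consistently through the iteration. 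The final statement uses the mean $\mathcal{G}_1$ over $Q_1$ rather than the mean over any intermediate slab, but the discrepancy between these means is controlled by $\|\mathcal{G}\|_{L^2(Q_1)}$ and is therefore absorbed into the right-hand side.
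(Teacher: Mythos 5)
Your proposal is correct and follows essentially the same route as the paper: apply Theorem \ref{thmWmq} with $m=0$, control the pressure (modulo its mean) via Lemma \ref{corowq} together with Poincar\'e's inequality for $\mathcal{W}$ vanishing on $\sigma$, bootstrap the integrability through Sobolev embedding until the exponent exceeds $d$, and then invoke Theorem \ref{thmWmq} once more with general $m$ on a slightly smaller slab for \eqref{energyQ1/2}. Your explicit remark that finitely many bootstrap steps may be needed to surpass $d$, and your bookkeeping of the pressure normalization $\mathcal{G}_1$, are only minor refinements of the paper's argument, not a different method.
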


\begin{proof}
We apply Theorem \ref{thmWmq} to \eqref{eqWG}, starting with $m=0$ (since $\partial{D}_{i}$ is of class $C^{3}$) and $q=2$, then the following holds:
\begin{align*}
\|\mathcal{W}\|_{W^{2,2}(Q_{2/3})}+&\|\mathcal{G}-\mathcal{G}_{1}\|_{W^{1,2}(Q_{2/3})}\\
&\leq C\left(\|\mathcal{F}\|_{L^{\infty}(Q_1)}+\|\mathcal{W}\|_{W^{1,2}(Q_1)}+\|\mathcal{G}-\mathcal{G}_{1}\|_{L^{2}(Q_1)}\right),
\end{align*}
where $\mathcal{G}_{1}=\frac{1}{|Q_1|}\int_{Q_1}\mathcal{G}$. By virtue of Lemma \ref{corowq} and Poincar\'e inequality (since $\mathcal{W}=0$ on $\sigma$), we get
\begin{align*}
\|\mathcal{W}\|_{W^{2,2}(Q_{2/3})}+&\|\mathcal{G}-\mathcal{G}_{1}\|_{W^{1,2}(Q_{2/3})}\leq C\left(\|\mathcal{F}\|_{L^{\infty}(Q_1)}+\|\nabla\mathcal{W}\|_{L^{2}(Q_1)}\right).
\end{align*}
Employing Sobolev embedding theorem, we have $W^{1,2}(Q_1)\subset L^{2^*}(Q_1)$, where $2^*=\frac{2d}{d-2}>d$ if $d>2$, and $2^*>2$ if $d=2$. Moreover, $W_{0}^{1,q}(Q_1)$ is compactly embedded in $C^{0}(\bar{Q}_1)$ if $q>d$. Hence, using Theorem \ref{thmWmq} again for $m=0$ and applying bootstrap arguments for some $q>d$, we obtain \eqref{energy0Q1/2}. 
For $m\geq 1$, by using Theorem \ref{thmWmq} to \eqref{eqWG} with a
slightly smaller domain, we obtain 
\begin{align*}
\|\mathcal{W}\|_{W^{m+2,q}(Q_{1/2})}+&\|\mathcal{G}-\mathcal{G}_{1}\|_{W^{m+1,q}(Q_{1/2})}\\
&\leq C\left(\|\mathcal{F}\|_{W^{m,q}(Q_{2/3})}+\|\mathcal{W}\|_{W^{1,q}(Q_{2/3})}+\|\mathcal{G}-\mathcal{G}_{1}\|_{L^{q}(Q_{2/3})}\right).
\end{align*}
Then by using Poincar\'e inequality and \eqref{energy0Q1/2}, we have 
\begin{align*}
\|\nabla^{m+1}\mathcal{W}\|_{W^{1,q}(Q_{1/2})}+\|\nabla^{m}\mathcal{G}\|_{W^{1,q}(Q_{1/2})}\leq C\left(\|\mathcal{F}\|_{W^{m,q}(Q_{1})}+\|\nabla\mathcal{W}\|_{L^{2}(Q_{1})}\right).
\end{align*}
Together with Sobolev embedding theorem for $q>d$, we obtain \eqref{energyQ1/2}. 
\end{proof}

\subsection{Local $W^{1,\infty}$ Estimates}
Thanks to the following change of variables
\begin{equation}\label{changeofvariant}
\left\{
\begin{aligned}
&x'-z'=\delta(z') y',\\
&x_d=\delta(z') y_{d},
\end{aligned}
\right.
\end{equation}
we transform $\Omega_{\delta(z')}(z')$ into a nearly unit size domain 
$Q_{1}$ defined by \eqref{defQ}. Recall that each ${\bf u}_{i}^{\alpha}$ is incompressible, that is,
$$\nabla\cdot {\bf u}_{i}^{\alpha}=0,\quad\mathrm{in}~\Omega,$$
and meanwhile each ${\bf v}_{i}^{\alpha}$ that we constructed in Section \ref{auxiliary} is also incompressible in $\Omega_{2R}$ with $\|{\bf v}_{i}^{\alpha}\|_{C^{2}(\Omega\setminus\Omega_{R})}\leq\,C$.
Then their differences
\begin{align*}
{\bf w}_{i}^{\alpha}:={\bf u}_{i}^{\alpha}-{\bf v}_{i}^{\alpha},\quad\mbox{and}\quad q_{i}^{\alpha}:=p_{i}^{\alpha}-\bar{p}_{i}^{\alpha},\quad\,i=1,2,~\alpha=1,2,3,
\end{align*}
verify the boundary value problem \eqref{w1}. By applying Proposition \ref{lemWG}, the following Proposition holds in any dimension:
\begin{prop}\label{lemWG2}
Let $({\bf w},q)$ be the solution to \eqref{w1}. Then the following estimates hold
\begin{align}\label{W2pstokes}
\|\nabla {\bf w}\|_{L^{\infty}(\Omega_{\delta/2}(z'))}&+\|q-q_{\delta;z'}\|_{L^{\infty}(\Omega_{\delta/2}(z'))}\nonumber\\
&\leq
\,C\left(\frac{1}{\delta^{d/2}(z')}\|\nabla {\bf w}\|_{L^{2}(\Omega_{\delta}(z'))}+\delta(z')\|{\bf f}\|_{L^{\infty}(\Omega_{\delta}(z'))} \right),
\end{align}
where $\delta=\delta(z')$ and $q_{\delta;z'}=\frac{1}{|\Omega_{\delta}(z')|}\int_{\Omega_{\delta}(z')}q$; and for $m\geq1$,
\begin{align}\label{Wmpstokes}
&\|\nabla^{m+1}{\bf w}\|_{L^{\infty}(\Omega_{\delta/2}(z'))}+\|\nabla^{m}q\|_{L^{\infty}(\Omega_{\delta/2}(z'))}\nonumber\\
&\leq
\frac{C}{\delta^{m+1}(z')}\left(\delta^{1-\frac{d}{2}}(z')\|\nabla {\bf w}\|_{L^{2}(\Omega_{\delta}(z'))}+\sum_{j=0}^{m}\delta^{2+j}(z')\|\nabla^{j}{\bf f}\|_{L^{\infty}(\Omega_{\delta}(z'))} \right).
\end{align}
\end{prop}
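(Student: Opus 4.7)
The strategy is a rescaling argument: use \eqref{changeofvariant} to map the variable-size region $\Omega_{\delta}(z')$ to the nearly unit-size reference domain $Q_1$ defined in \eqref{defQ}, apply the already-established Proposition \ref{lemWG} there, and convert the resulting estimates back to the original variables.

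First, I would introduce the rescaled unknowns
$$\tilde{\bf w}(y):={\bf w}(z'+\delta y',\delta y_d),\qquad \tilde q(y):=\delta\,q(z'+\delta y',\delta y_d),\qquad \tilde{\bf f}(y):=\delta^{2}\,{\bf f}(z'+\delta y',\delta y_d),$$
with $\delta=\delta(z')$. The chain rule gives $\nabla_y\tilde{\bf w}=\delta(\nabla_x{\bf w})$ and $\nabla_y\tilde q=\delta^{2}(\nabla_x q)$, so the equation $-\mu\Delta_x{\bf w}+\nabla_x q={\bf f}$ in $\Omega_\delta(z')$ becomes $-\mu\Delta_y\tilde{\bf w}+\nabla_y\tilde q=\tilde{\bf f}$ in $Q_1$. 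Since $\Omega_\delta(z')\subset\Omega_{2R}$ for $|z'|\le R$ and small $\varepsilon$, the divergence-free condition from \eqref{w1} transforms into $\nabla_y\cdot\tilde{\bf w}=0$ in $Q_1$. The images of the two curved portions of $\partial D_1\cup\partial D_2$ bounding $\Omega_\delta(z')$ are precisely $\hat\Gamma_1^{+}\cup\hat\Gamma_1^{-}$, and ${\bf w}=0$ there, so $\tilde{\bf w}=0$ on $\sigma$. Hence $(\tilde{\bf w},\tilde q)$ satisfies \eqref{eqWG} in $Q_1$ with source $\tilde{\bf f}$.

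Next, I would apply Proposition \ref{lemWG}. For $m=0$ it yields
$$\|\nabla_y\tilde{\bf w}\|_{L^\infty(Q_{1/2})}+\|\tilde q-\tilde q_1\|_{L^\infty(Q_{1/2})}\le C\bigl(\|\tilde{\bf f}\|_{L^\infty(Q_1)}+\|\nabla_y\tilde{\bf w}\|_{L^2(Q_1)}\bigr),$$
where $\tilde q_1$ denotes the average of $\tilde q$ over $Q_1$, and the analogous estimate \eqref{energyQ1/2} holds for $m\ge 1$. Using that the Jacobian of the change of variables is $\delta^{d}$, one obtains the identities
$$\|\nabla_y^k\tilde{\bf w}\|_{L^\infty(Q_{1/2})}=\delta^{k}\|\nabla_x^k{\bf w}\|_{L^\infty(\Omega_{\delta/2}(z'))},\quad \|\nabla_y^k\tilde q\|_{L^\infty(Q_{1/2})}=\delta^{k+1}\|\nabla_x^k q\|_{L^\infty(\Omega_{\delta/2}(z'))},$$
together with $\|\nabla_y\tilde{\bf w}\|_{L^2(Q_1)}=\delta^{1-d/2}\|\nabla{\bf w}\|_{L^2(\Omega_\delta(z'))}$ and $\|\nabla_y^j\tilde{\bf f}\|_{L^\infty(Q_1)}=\delta^{j+2}\|\nabla^j{\bf f}\|_{L^\infty(\Omega_\delta(z'))}$; moreover $\tilde q_1=\delta\,q_{\delta;z'}$, so $\|\tilde q-\tilde q_1\|_{L^\infty(Q_{1/2})}=\delta\|q-q_{\delta;z'}\|_{L^\infty(\Omega_{\delta/2}(z'))}$. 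Substituting these into the rescaled estimates and dividing through by $\delta$ (respectively $\delta^{m+1}$) produces exactly \eqref{W2pstokes} (respectively \eqref{Wmpstokes}).

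\textbf{Main obstacle.} There is no real analytic difficulty; the only point deserving scrutiny is that the constant $C$ in Proposition \ref{lemWG} must be independent of $z'$. This reduces to the observation that the rescaled profiles $y'\mapsto\tfrac{1}{\delta}h_i(\delta y'+z')$ have $C^3$-norms bounded uniformly for $|z'|\le R$ and small $\varepsilon$, which is immediate in the model case $h_i=\tfrac12|x'|^2$ assumed in Subsection \ref{auxiliary} and follows from \eqref{h1h14} in general, so the geometry of $Q_1$ is controlled uniformly in $z'$. The inhomogeneous divergence condition $\nabla\cdot{\bf w}=-\nabla\cdot{\bf v}$ on $\Omega\setminus\Omega_R$ from \eqref{w1} plays no role because the entire argument is localized inside the neck region where $\nabla\cdot{\bf w}=0$.
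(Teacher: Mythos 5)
Your proposal is correct and follows essentially the same route as the paper: the paper's proof also rescales $({\bf w},q,{\bf f})$ via \eqref{changeofvariant} to the reference domain $Q_1$ (with $\mathcal W(y)={\bf w}(\delta y'+z',\delta y_d)$, $\mathcal G(y)=\delta q(\cdot)$, $\mathcal F=\delta^2{\bf f}(\cdot)$), applies Proposition \ref{lemWG}, and scales back, exactly as you do. Your bookkeeping of the scaling factors and the remark on uniformity of the constant in $z'$ match the intended argument.
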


\begin{proof}
Using change of variables
 \eqref{changeofvariant}, let us define
\begin{equation*}
\mathcal{W}(y', y_{d}):={\bf w}(\delta\,y'+z',\delta\,y_{d}),\quad \mathcal{V}(y', y_{d}):={\bf v}(\delta\,y'+z',\delta\,y_{d}),\quad y\in{Q}_{1},
\end{equation*}
and
\begin{equation*}
\mathcal{G}(y', y_{d}):=\delta q(\delta\,y'+z',\delta\,y_{d}),\quad \bar {\mathcal{P}}(y', y_{d}):=\delta\bar{p}(\delta\,y'+z',\delta\,y_{d}),\quad y\in{Q}_{1}.
\end{equation*}
It follows from \eqref{w1} that
\begin{align*}
\begin{cases}
-\mu\Delta\mathcal{W}+\nabla(\mathcal{G}-\mathcal{G}_{1})=\mu\Delta\mathcal{V}-\nabla\bar{\mathcal{P}}=:\mathcal{F},\quad&\mathrm{in}\,Q_1\\
\nabla\cdot \mathcal{W}=0,\quad&\mathrm{in}\,Q_1,\\
\mathcal{W}=0,\quad&\mathrm{on}\,\hat\Gamma_{1}^{+}\cup\hat\Gamma_{1}^{-}.
\end{cases}
\end{align*}
Then by Lemma \ref{lemWG} and rescaling back, there holds \eqref{W2pstokes} and \eqref{Wmpstokes}.
\end{proof}

\subsection{Global Energy Estimates}
To apply Proposition \ref{lemWG2}, we need to first establish the energy estimates $\|\nabla {\bf w}\|_{L^{2}(\Omega_{\delta}(z'))}$. To this aim, it requires to prove the global energy is bounded in the whole domain $\Omega$, under certain assumptions on the right hand side of the equation in \eqref{w1}. The following lemma holds in any dimension $d\geq2$ and will be used to directly prove the boundedness of the energy of ${\bf w}_{i}^{\alpha}$ for the case $\alpha=1$. While, for other cases, more technique is presented in Subsection \ref{subsec2} and Subsection \ref{subsec3}.

\begin{lemma}\label{lemmaenergy}
Let $({\bf w},q)$ be the solution to \eqref{w1}. Then if ${\bf v}\in C^{2}(\Omega;\mathbb R^d)$ and $\bar{p}\in C^{1}(\Omega)$ satisfy
\begin{equation}\label{estv113D3}
\|{\bf v}\|_{C^{2}(\Omega\setminus\Omega_R)}\leq\,C,\quad\|\bar{p}\|_{C^1(\Omega\setminus\Omega_{R})}\leq C,
\end{equation}
and 
\begin{align}\label{int-fw}
\Big| \int_{\Omega_{R}}\sum_{j=1}^{d}{\bf f}^{(j)}{\bf w}^{(j)}\mathrm{d}x\Big|\leq\,C\left(\int_{\Omega}|\nabla {\bf w}|^2\mathrm{d}x\right)^{1/2},
\end{align}
then
\begin{align}\label{estw11case2}
\int_{\Omega}|\nabla {\bf w}|^{2}\mathrm{d}x\leq C.
\end{align}
\end{lemma}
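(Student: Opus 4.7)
The strategy is to test the equation $-\mu\Delta{\bf w}+\nabla q={\bf f}$ against ${\bf w}$ itself, reduce the resulting pressure contribution to an integral over the ``safe'' outer region $\Omega\setminus\Omega_{2R}$ where the data is bounded, and close the resulting inequality by Young's inequality. Since ${\bf w}=0$ on $\partial\Omega$, integration by parts gives the energy identity
$$
\mu\int_{\Omega}|\nabla{\bf w}|^{2}\,\mathrm{d}x = \int_{\Omega}q\,\nabla\cdot{\bf w}\,\mathrm{d}x + \int_{\Omega}{\bf f}\cdot{\bf w}\,\mathrm{d}x,
$$
so the task reduces to bounding the right-hand side by $C(\|\nabla{\bf w}\|_{L^{2}(\Omega)}+1)$.

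For the pressure term, the divergence conditions in \eqref{w1} give $\nabla\cdot{\bf w}\equiv 0$ on $\Omega_{2R}$ and $\nabla\cdot{\bf w}=-\nabla\cdot{\bf v}$ on $\Omega\setminus\Omega_{R}$; the two prescriptions are compatible on the overlap because ${\bf v}$ is constructed to be divergence-free on $\Omega_{2R}$. Consequently $\nabla\cdot{\bf w}$ is supported in $\Omega\setminus\Omega_{2R}$, and
$$
\int_{\Omega}q\,\nabla\cdot{\bf w}\,\mathrm{d}x = -\int_{\Omega\setminus\Omega_{2R}}q\,\nabla\cdot{\bf v}\,\mathrm{d}x.
$$
Since $\int_{\Omega}\nabla\cdot{\bf w}=0$, $q$ can be shifted by any constant; taking $c$ to be the mean of $q$ over $\Omega\setminus\Omega_{2R}$ and applying Cauchy--Schwarz together with the bound $\|\nabla\cdot{\bf v}\|_{L^{\infty}(\Omega\setminus\Omega_{2R})}\le C$ from \eqref{estv113D3} reduces matters to estimating $\|q-c\|_{L^{2}(\Omega\setminus\Omega_{2R})}$. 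Because $\Omega\setminus\Omega_{2R}\subset\Omega\setminus\Omega_{R}$, assumption \eqref{estv113D3} also yields $\|{\bf f}\|_{L^{\infty}(\Omega\setminus\Omega_{2R})}\le C$, so Lemma \ref{corowq} applied on the Lipschitz domain $\Omega\setminus\overline{\Omega_{2R}}$ gives $\|q-c\|_{L^{2}(\Omega\setminus\Omega_{2R})}^{2}\le C\|\nabla{\bf w}\|_{L^{2}(\Omega)}^{2}+C$, and therefore $|\int_{\Omega}q\,\nabla\cdot{\bf w}|\le C(\|\nabla{\bf w}\|_{L^{2}(\Omega)}+1)$.

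For the forcing term I split $\int_{\Omega}{\bf f}\cdot{\bf w}$ into a contribution over $\Omega_{R}$, which is bounded by $C\|\nabla{\bf w}\|_{L^{2}(\Omega)}$ directly by hypothesis \eqref{int-fw}, and a contribution over $\Omega\setminus\Omega_{R}$, which is bounded by $\|{\bf f}\|_{L^{\infty}(\Omega\setminus\Omega_{R})}\|{\bf w}\|_{L^{2}(\Omega)}\le C\|\nabla{\bf w}\|_{L^{2}(\Omega)}$ using \eqref{estv113D3} and the Poincar\'e inequality (available because ${\bf w}=0$ on $\partial\Omega$). Combining these with the pressure estimate produces $\mu\|\nabla{\bf w}\|_{L^{2}(\Omega)}^{2}\le C\|\nabla{\bf w}\|_{L^{2}(\Omega)}+C$, and Young's inequality yields \eqref{estw11case2}.

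The expected main obstacle is exactly the pressure term. Because the Stokes system forces $q$ into the energy identity through the term $\int q\,\nabla\cdot{\bf w}$, a naive argument would require a global $L^{2}$ estimate on $q-q_{\Omega}$; but such a bound is unavailable with a universal constant, since ${\bf f}=\mu\Delta{\bf v}-\nabla\bar{p}$ can blow up on $\Omega_{R}$ as $\varepsilon\to 0$. The decisive point is that the partial divergence-free condition on $\Omega_{2R}$ localizes the integrand to the outer region $\Omega\setminus\Omega_{2R}$, where the $C^{2}/C^{1}$ bounds in \eqref{estv113D3} provide an $L^{\infty}$ control on ${\bf f}$ and thus allow Lemma \ref{corowq} to furnish a local $L^{2}$ estimate on $q$ sufficient to close the global energy inequality.
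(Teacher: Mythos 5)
Your proposal is correct and follows essentially the same route as the paper: test the equation with ${\bf w}$, use the partial divergence-free conditions to confine the pressure term $\int q\,\nabla\cdot{\bf w}$ to the outer region where ${\bf f}$ and $\nabla\cdot{\bf v}$ are bounded, control $q$ minus a constant there via Lemma \ref{corowq}, handle the forcing term by the hypothesis \eqref{int-fw} on $\Omega_R$ plus Poincar\'e outside, and close with Young's inequality. The only differences are cosmetic (you localize to $\Omega\setminus\Omega_{2R}$ and subtract the mean of $q$ after the identity, whereas the paper works on $\Omega\setminus\Omega_R$ and subtracts $q_{_{R,out}}$ in the equation), and they do not affect the argument.
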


\begin{proof}
Suppose $({\bf w},q)$ is the solution to \eqref{w1}, then it also verifies \begin{align}\label{w12}
-\mu\Delta{\bf w}+\nabla(q-q_{_{R,out}})=\mu\Delta{\bf v}-\nabla\bar{p},
\end{align}
where $q_{_{R,out}}:=\frac{1}{|\Omega\backslash\Omega_{R}|}\int_{\Omega\backslash\Omega_{R}}q\mathrm{d}x$. Multiplying equation \eqref{w12} by ${\bf w}$, and integrating by parts, yields
\begin{align*}
\mu\int_{\Omega}|\nabla{\bf w}|^2\mathrm{d}x-\int_{\Omega}(q-q_{_{R,out}})\nabla\cdot{\bf w}\mathrm{d}x=\int_{\Omega}{\bf f}\cdot{\bf w}\mathrm{d}x.
\end{align*}
In view of the second and third lines in \eqref{w1}, we have
\begin{align}\label{energy}
\mu\int_{\Omega}|\nabla{\bf w}|^2\mathrm{d}x=-\int_{\Omega\setminus\Omega_R}(q-q_{_{R,out}})\nabla\cdot{\bf v}\mathrm{d}x+\int_{\Omega}{\bf f}\cdot{\bf w}\mathrm{d}x.
\end{align}

By H\"{o}lder's inequality and \eqref{estv113D3}, 
\begin{align}\label{energy2}
\left|\int_{\Omega\setminus\Omega_R}(q-q_{_{R,out}})\nabla\cdot{\bf v}\mathrm{d}x\right|\leq\,C\left(\int_{\Omega\setminus\Omega_R}|q-q_{_{R,out}}|^{2}\mathrm{d}x\right)^{1/2}.
\end{align}
Applying Lemma \ref{corowq} to \eqref{w12} in $\Omega\backslash\Omega_{R}$ and using \eqref{estv113D3}, \begin{equation}\label{estaverageq1}
\int_{\Omega\backslash\Omega_{R}}
\left|q-q_{_{R,out}}\right|^{2}\mathrm{d}x\leq C\int_{\Omega\backslash\Omega_{R}}|\nabla{\bf w}|^2\mathrm{d}x+C.
\end{equation}
As far as the second term on the right hand side of \eqref{energy} is concerned, by assumptions \eqref{estv113D3} and \eqref{int-fw}, and using Poincar\'e inequality, we deduce
\begin{align}\label{bddnblaw1}
\Big|\int_{\Omega}{\bf f}\cdot{\bf w}\mathrm{d}x\Big|
&\leq C\Big|\int_{\Omega_{R}}{\bf f}\cdot{\bf w}\mathrm{d}x\Big|
+C\left(\int_{\Omega\backslash\Omega_{R}}|\nabla {\bf w}|^2\mathrm{d}x\right)^{1/2}\leq\,C\left(\int_{\Omega}|\nabla {\bf w}|^2\mathrm{d}x\right)^{1/2}.
\end{align}
Thus, combining \eqref{energy}--\eqref{bddnblaw1}, we conclude that \eqref{estw11case2} holds. 
\end{proof}

\begin{remark}
We remark that the boundedness of the global energy is an important step to employ our iteration approach to obtain the boundedness of the local energy in a small subregion of the narrow region. We begin by the following Caccioppoli-type inequality to establish the local energy estimates. More details, see Section \ref{sec_estimate2D} below.
\end{remark}

\subsection{A Caccioppoli-type Inequality}
We need the following simple lemma, see, for example, \cite{HanLin}.
\begin{lemma}\label{simpleLemma}
Let $f(t)\geq 0$ be bounded in $[\tau_{0},\tau_{1}]$ with $\tau_{0}\geq 0$. Suppose for $\tau_{0}\leq t<s\leq\tau_{1}$, we have
$$f(t)\leq\theta\,f(s)+\frac{A}{(s-t)^{\alpha}}+B,$$
for some $\theta\in[0,1)$. Then for any $\tau_{0}\leq t<s\leq\tau_{1}$ there holds
$$f(t)\leq\,C(\alpha,\theta)\Big(\frac{A}{(s-t)^{\alpha}}+B\Big).$$
\end{lemma}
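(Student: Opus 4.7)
This is a standard iteration (hole-filling) lemma, and I would prove it by a geometric sequence argument. The idea is to fix $t<s$ in $[\tau_0,\tau_1]$ and choose an auxiliary ratio $\tau\in(0,1)$ close enough to $1$ that the ``blow-up'' factor $A/(\cdot)^\alpha$ generated at each iteration is more than compensated by the contraction $\theta$. Explicitly, pick any $\tau\in(0,1)$ with $\theta\,\tau^{-\alpha}<1$ (this is possible because $\theta<1$; e.g.\ $\tau=\bigl((1+\theta)/2\bigr)^{1/\alpha}$ if that is less than $1$, otherwise any $\tau$ sufficiently close to $1$ works).

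\textbf{Construction of the sequence.} Define $t_0:=t$ and $t_{k+1}:=t_k+(1-\tau)\tau^k(s-t)$, so that $s-t_k=\tau^k(s-t)$ and $t_{k+1}-t_k=(1-\tau)\tau^k(s-t)$. In particular $t_k$ is increasing, $t_k\in[t,s)\subset[\tau_0,\tau_1]$ for every $k$, and $t_k\to s$. Applying the hypothesis to the pair $(t_k,t_{k+1})$ gives
\begin{equation*}
f(t_k)\le \theta f(t_{k+1})+\frac{A}{(1-\tau)^\alpha\tau^{k\alpha}(s-t)^\alpha}+B.
\end{equation*}

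\textbf{Iteration.} Iterating this bound $N$ times yields
\begin{equation*}
f(t)=f(t_0)\le \theta^N f(t_N)+\frac{A}{(1-\tau)^\alpha(s-t)^\alpha}\sum_{k=0}^{N-1}(\theta\tau^{-\alpha})^k+B\sum_{k=0}^{N-1}\theta^k.
\end{equation*}
Since $\theta\in[0,1)$ and $\theta\tau^{-\alpha}<1$ by our choice of $\tau$, both geometric series converge as $N\to\infty$, giving sums bounded by constants $C(\alpha,\theta)$. Since $f$ is assumed bounded on $[\tau_0,\tau_1]$, $\theta^N f(t_N)\to 0$. Letting $N\to\infty$ therefore gives
\begin{equation*}
f(t)\le \frac{1}{(1-\tau)^\alpha(1-\theta\tau^{-\alpha})}\cdot\frac{A}{(s-t)^\alpha}+\frac{B}{1-\theta},
\end{equation*}
which is precisely $f(t)\le C(\alpha,\theta)\bigl(\tfrac{A}{(s-t)^\alpha}+B\bigr)$, as desired.

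\textbf{Remarks on obstacles.} There is no real obstacle here; the only delicate point is the choice of $\tau$, which must satisfy both $\tau<1$ (so $t_k$ stays strictly inside $[t,s]$ and we can apply the hypothesis at each step) and $\theta\tau^{-\alpha}<1$ (so the geometric series converges). Since $\theta<1$, one can always pick such a $\tau$, and the resulting constant depends only on $\alpha$ and $\theta$. The boundedness assumption on $f$ is used only to kill the tail term $\theta^N f(t_N)$; it cannot be dropped without some replacement, since otherwise the iteration would only yield a bound modulo an unknown residual at the endpoint.
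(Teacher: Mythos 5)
Your proof is correct and is exactly the standard geometric-sequence iteration argument; the paper itself gives no proof but simply cites Han--Lin, whose proof of this lemma is the same construction (choose $\tau$ with $\theta\tau^{-\alpha}<1$, iterate along $t_k$ with $s-t_k=\tau^k(s-t)$, and sum the geometric series, using boundedness of $f$ to discard $\theta^N f(t_N)$). No gaps.
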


For Stokes system, the following Caccioppoli-type inequality is a starting point to build an adapted version of the iteration formula used in \cite{BLL,BLL2} to deal with Lam\'e system. For $|z'|\leq R/2$ and $s<R/2$, let
$$\Omega_{s}(z'):=\left\{(x',x_{2})\big| -\frac{\varepsilon}{2}+h_{2}(x')<x_{2}
<\frac{\varepsilon}{2}+h_{1}(x'),\,|x'-z'|<s \right\}.$$

\begin{lemma} (Caccioppoli-type Inequality) Let $({\bf w},q)$ be the solution to \eqref{w1}. For $0<t<s\leq R$, there holds
\begin{align}\label{iterating1}
\int_{\Omega_{t}(z')}|\nabla {\bf w}|^{2}\mathrm{d}x\leq\,&
\frac{C\delta^2(z')}{(s-t)^{2}}\int_{\Omega_{s}(z')}|\nabla {\bf w}|^2\mathrm{d}x+C\left((s-t)^{2}+\delta^{2}(z')\right)\int_{\Omega_{s}(z')}|{\bf f}|^{2}\mathrm{d}x.
\end{align}
\end{lemma}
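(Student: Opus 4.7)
The plan is to employ the standard Caccioppoli scheme adapted to the Stokes setting: test the momentum equation against $\eta^2\mathbf{w}$ for a tangential cutoff $\eta(x')\in C_c^1$ depending only on $x'$, with $\eta\equiv 1$ on $B_t(z')$, $\eta\equiv 0$ off $B_s(z')$, and $|\nabla\eta|\le C/(s-t)$, extended trivially in $x_d$. Because $\eta=0$ on the lateral boundary $\{|x'-z'|=s\}$ and $\mathbf{w}=0$ on the top and bottom portions of $\partial D_1\cup\partial D_2$, all boundary terms vanish after integration by parts, yielding
\[
\mu\int_{\Omega_s(z')}\eta^2|\nabla\mathbf{w}|^2\,dx=-2\mu\int\eta\,\nabla\mathbf{w}:(\mathbf{w}\otimes\nabla\eta)\,dx+\int q\,\nabla\!\cdot(\eta^2\mathbf{w})\,dx+\int\mathbf{f}\cdot\eta^2\mathbf{w}\,dx.
\]

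The pressure term is the heart of the matter. Since $\Omega_s(z')\subset\Omega_R$, the second line of \eqref{w1} gives $\nabla\cdot\mathbf{w}=0$, whence $\nabla\cdot(\eta^2\mathbf{w})=2\eta\nabla\eta\cdot\mathbf{w}$, and this function has zero mean over $\Omega_s(z')$ by the divergence theorem together with the boundary conditions. I may thus replace $q$ by $q-\bar q$ for any constant $\bar q$, and I will take $\bar q$ to be the mean of $q$ on the annular shell $A:=\Omega_s(z')\setminus\Omega_t(z')$ where $\nabla\eta$ is supported. I then construct $\boldsymbol\phi\in H^1_0(A)$ with $\nabla\cdot\boldsymbol\phi=q-\bar q$ and $\|\nabla\boldsymbol\phi\|_{L^2(A)}\le C\|q-\bar q\|_{L^2(A)}$ via Lemma \ref{lemmaf0}; testing the first line of \eqref{w1} against $\boldsymbol\phi$ and applying Cauchy--Schwarz delivers a Lemma \ref{corowq}-type bound
\[
\|q-\bar q\|_{L^2(A)}^2\le C\|\nabla\mathbf{w}\|_{L^2(A)}^2+C(s-t)^2\|\mathbf{f}\|_{L^2(A)}^2,
\]
which is the source of the $(s-t)^2$ weight on the forcing term in the final estimate.

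Once the pressure is under control, I apply Young's inequality to absorb $\frac{\mu}{4}\int\eta^2|\nabla\mathbf{w}|^2$ back into the left side, leaving on the right only terms of the form $\int|\nabla\eta|^2|\mathbf{w}|^2$, $\int\eta^2|\mathbf{w}|^2$ and forcing contributions. The vertical Poincar\'e inequality
\[
\int|\mathbf{w}|^2\,dx\le C\delta(z')^2\int|\partial_{x_d}\mathbf{w}|^2\,dx,
\]
valid since $\mathbf{w}=0$ on the top and bottom of the neck of effective width $\delta(z')$, then converts these into the claimed factors $\frac{C\delta^2(z')}{(s-t)^2}$ and $(\delta^2(z')+(s-t)^2)$. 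The two delicate points I expect are (i) producing the Bogovskii corrector $\boldsymbol\phi$ on the anisotropic set $A$ with a constant uniform in $s,t,z'$, which I would handle by first rescaling $A$ via \eqref{changeofvariant} to a set of order one so that Lemma \ref{lemmaf0} applies with a universal constant; and (ii) justifying the vertical Poincar\'e constant as $\delta(z')^2$ even though the neck height $\delta(x')$ varies with $x'$, which requires exploiting that the iteration is subsequently applied with $s-t$ at most of order $\sqrt{\delta(z')}$, so that $\delta(x')\lesssim\delta(z')$ on the relevant annulus.
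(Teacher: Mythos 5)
Your skeleton (testing with $\eta^{2}{\bf w}$, a Bogovskii-type duality bound for the pressure, and the vertical Poincar\'e inequality with width $\delta(z')$) is the paper's, but the step where you claim that Young's inequality leaves ``only terms of the form $\int|\nabla\eta|^{2}|{\bf w}|^{2}$, $\int\eta^{2}|{\bf w}|^{2}$ and forcing contributions'' does not close. The pressure contribution is $\int_{A}(q-\bar q)\,2\eta\nabla\eta\cdot{\bf w}$, and with your own shell estimate and the Poincar\'e inequality it is bounded by $\frac{C\delta}{s-t}\|\nabla{\bf w}\|_{L^{2}(A)}^{2}+C\delta(s-t)\|{\bf f}\|_{L^{2}(A)}^{2}$; when $s-t\gg\delta(z')$ the first term is $c\,\|\nabla{\bf w}\|_{L^{2}(A)}^{2}$ with a constant bounded below, no matter how the Young weights are distributed. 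Such a term can be absorbed neither into $\mu\int\eta^{2}|\nabla{\bf w}|^{2}$ (the cutoff is not bounded below on $A$) nor into $\frac{C\delta^{2}}{(s-t)^{2}}\int_{\Omega_{s}(z')}|\nabla{\bf w}|^{2}$. This is exactly the obstruction the paper meets: its Young weight $\frac{1}{4C_{1}}$ in \eqref{etaw1q1} deliberately leaves $\frac12\int_{\Omega_{s}(z')}|\nabla{\bf w}|^{2}$ on the right, and \eqref{iterating1} is then reached only through the iteration Lemma \ref{simpleLemma} (equivalently, hole-filling: add $c\int_{\Omega_{t}(z')}|\nabla{\bf w}|^{2}$ to both sides to get a factor $\theta<1$, then iterate). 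Your proposal omits this step, and without it the stated inequality does not follow.

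The localization of the pressure estimate to the shell $A=\Omega_{s}(z')\setminus\Omega_{t}(z')$ creates two further problems that the paper's choice (subtracting the mean $q_{s;z'}$ over the whole connected set $\Omega_{s}(z')$ and using the rescaled bound \eqref{narrowq1} as in Corollary \ref{rmkmean}) avoids. First, in dimension two $A$ has two connected components, so with $\bar q$ the mean over all of $A$ the datum $q-\bar q$ need not integrate to zero on each component, and Lemma \ref{lemmaf0} does not furnish $\boldsymbol\phi\in H^{1}_{0}(A)$ with $\nabla\cdot\boldsymbol\phi=q-\bar q$; the component means cannot be equalized for free, because subtracting separate constants $\bar q^{\pm}$ on the two pieces introduces in the energy identity the extra term $(\bar q^{-}-\bar q^{+})\int{\bf w}^{(1)}\,\mathrm{d}x_{2}$ (the horizontal flux of ${\bf w}$ through the neck), which does not vanish in general and is not controlled by the right-hand side of \eqref{iterating1}. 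Second, the rescaling \eqref{changeofvariant} sends each component of $A$ to a strip of height of order one and width $(s-t)/\delta(z')$, not to a set of order one; the constant in the divergence problem is governed by the shape (not merely the diameter) and degenerates like $\max\{(s-t)/\delta(z'),\,\delta(z')/(s-t)\}$, so your ``uniform in $s,t,z'$'' constant is only available when $s-t\sim\delta(z')$, whereas the lemma is claimed for all $0<t<s\le R$. Your caveat (ii) about the Poincar\'e constant points in the right direction, but the same restriction then applies to your pressure step, and neither is reconciled with the stated range of $s$ and $t$.
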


\begin{proof}
Since ${\bf w}=0$ on $\partial D_1\cup\partial D_2$, employing Poincar\'e inequality, it is not difficult to deduce, see \cite[(3.36),(3.39)]{BLL2},
\begin{align}\label{estwDwl2}
\int_{\Omega_{s}(z')}|{\bf w}|^2\mathrm{d}x\leq
C\delta^2(z')\int_{\Omega_{s}(z')}|\nabla {\bf w}|^2\mathrm{d}x.
\end{align}
For $0<t<s\leq R$, let $\eta$ be a smooth function satisfying $\eta(x')=1$ if $|x'-z'|<t$, $\eta(x')=0$ if $|x'-z'|>s$, $0\leqslant\eta(x')\leqslant1$ if $t\leqslant|x'-z'|<s$,
 and
$|\eta'(x_1)\leq \frac{2}{s-t}$. Let $q_{s;z'}=\frac{1}{|\Omega_{s}(z')|}\int_{\Omega_{s}(z')}q\mathrm{d}x$. Multiplying the equation 
$$-\mu\Delta{\bf w}+\nabla(q-q_{s;z'})={\bf f}:=\mu\Delta{\bf v}-\nabla\bar{p},\quad\mbox{in}~\Omega_{s}(z'),$$
by $\eta^{2}{\bf w}$ and integrating by parts, we easily obtain
\begin{align}\label{energynarr}
\mu\int_{\Omega_{s}(z')}\eta^{2}|\nabla {\bf w}|^{2}\mathrm{d}x=&
-\mu\int_{\Omega_{s}(z')}({\bf w}\nabla {\bf w})\cdot\nabla\eta^{2}\mathrm{d}x
+\int_{\Omega_{s}(z')} \eta^{2}{\bf f}\cdot{\bf w}\mathrm{d}x\nonumber\\
&+\int_{\Omega_{s}(z')}\nabla\cdot(\eta^{2}{\bf w})\left(q-q_{s;z'}\right)\mathrm{d}x.
\end{align}
Next, we shall estimate the terms on the right-hand side of \eqref{energynarr} one by one. 

For the first term in \eqref{energynarr}, employing the Cauchy's inequality, 
\begin{align}\label{w1Dw1eta}
\int_{\Omega_{s}(z')}|{\bf w}\nabla {\bf w}||\nabla\eta^{2}|\mathrm{d}x
\leq \frac{1}{4}\int_{\Omega_{s}(z')}|\nabla {\bf w}|^{2}\mathrm{d}x
+\frac{C}{(s-t)^2}\int_{\Omega_{s}(z')}|{\bf w}|^{2}\mathrm{d}x,
\end{align}
and for the second term, 
\begin{align}\label{etaw1v1p1}
\frac{1}{\mu}\left|\int_{\Omega_{s}(z')} \eta^{2}{\bf f}\cdot{\bf w}\mathrm{d}x\right|&\leq
\frac{C}{(s-t)^{2}}\int_{\Omega_{s}(z')}|{\bf w}|^{2}\mathrm{d}x+C(s-t)^{2}\int_{\Omega_{s}(z')}|{\bf f}|^2\mathrm{d}x.
\end{align}
For the last term, in view of \eqref{changeofvariant}, and by means of  the rescaling argument as in Corollary \ref{rmkmean}, we have
\begin{align}\label{narrowq1}
\int_{\Omega_{s}(z')}\left|q-q_{s;z'}\right|^{2}\mathrm{d}x&\leq C_{1}
\int_{\Omega_{s}(z')}|\nabla {\bf w}|^2\mathrm{d}x
+C\delta^{2}(z')\int_{\Omega_{s}(z')}|{\bf f}|^2\mathrm{d}x,
\end{align}
where $C_{1}$ is fixed now. Making use of the second line of \eqref{w1}, $\nabla\cdot{\bf w}=\nabla\cdot{\bf v}=0$ in $\Omega_R$, and applying Young's inequality, we have
\begin{align}\label{etaw1q1}
&\frac{1}{\mu}\left|\int_{\Omega_{s}(z')}\left(q-q_{s;z'}\right)\nabla\cdot(\eta^{2}{\bf w})\mathrm{d}x \right|\nonumber\\
\leq&\, \frac{1}{4C_{1}}\int_{\Omega_{s}(z')}|q-q_{s;z'}|^2\eta^{2}\mathrm{d}x+C\int_{\Omega_{s}(z')}|{\bf w}|^{2}|\nabla\eta|^{2}\mathrm{d}x.
\end{align}
Combining \eqref{estwDwl2} and \eqref{w1Dw1eta}--\eqref{etaw1q1} with  \eqref{energynarr} yields
\begin{align*}
\int_{\Omega_{t}(z')}|\nabla {\bf w}|^{2}\mathrm{d}x\leq\,& \frac{1}{2}\int_{\Omega_{s}(z')}|\nabla {\bf w}|^{2}\mathrm{d}x
+\frac{C\delta^2(z')}{(s-t)^{2}}\int_{\Omega_{s}(z')}|\nabla {\bf w}|^2\mathrm{d}x\nonumber\\
&+C\left((s-t)^{2}+\delta^{2}(z')\right)\int_{\Omega_{s}(z')}|{\bf f}|^2\mathrm{d}x.
\end{align*}
By virtue of Lemma \ref{simpleLemma}, we deduce that the estimate \eqref{iterating1} holds.
\end{proof}

In order to apply Proposition \ref{lemWG2} to establish the $L^{\infty}$ estimates of $|\nabla{\bf w}_1^\alpha|$ in $\Omega_{2R}$, from the right hand side of \eqref{W2pstokes}, we will start from the Caccioppoli-type inequality \eqref{iterating1} and build an iteration formula, based on the method developed in \cite{LLBY,BLL} to obtain the local energy in the small region $\Omega_{\delta(z')}(z')$. This depends on more information of every ${\bf v}_1^\alpha$ and $p_1^\alpha$, constructed in Section \ref{auxiliary}. More calculations are given in the next Section.

\section{Proofs for the main Estimates}\label{sec_estimate2D}
This section is devoted to proving the main estimates, listed in Section \ref{auxiliary}, to prove our main result Theorem \ref{mainthm2D}. To express our idea clear, first we consider the case $\alpha=1$ in Subsection \ref{subsec1}, where the derivation is not too complex, compared to the rest cases. Since the treatment is subtle for the cases $\alpha=2$, Subsection \ref{subsec2} is technical, especially when we deal with the boundedness of the corresponding global energy $E_i^{\alpha}$ defined in \eqref{wialpha} below.

\subsection{Estimates of $|\nabla{\bf u}_{i}^{1}|$ and $|p_{i}^{1}|$}\label{subsec1}

Recalling \eqref{v11}, by direct calculations,
\begin{equation}\label{estv112}
\partial_{x_1}({\bf v}^{1}_{1})^{(1)}=\partial_{x_1}k(x)=-\frac{2x_{1}}{\delta(x_{1})}k(x),\quad \partial_{x_2}({\bf v}^{1}_{1})^{(1)}=\partial_{x_2}k(x)=\frac{1}{\delta(x_{1})};
\end{equation}
and
\begin{align}\label{estv1122}
|\partial_{x_1}({\bf v}^{1}_{1})^{(2)}|\leq C,\quad
\partial_{x_2}({\bf v}^{1}_{1})^{(2)}= \frac{2x_{1}}{\delta(x_{1})}k(x).
\end{align}
Clearly, using \eqref{defp11}, we have 
\begin{equation}\label{v11p11}
\mu\partial_{x_2}({\bf v}^{1}_{1})^{(2)}-\bar{p}_1^1=0, \quad\mathrm{in}~\Omega_{2R}.
\end{equation}

By further calculations,
\begin{align}
\partial_{x_1x_1}({\bf v}^{1}_{1})^{(1)}&=-\frac{2}{\delta(x_{1})}k(x)+\frac{8x_{1}^{2}}{\delta^{2}(x_{1})}k(x),\quad\partial_{x_2x_2}({\bf v}^{1}_{1})^{(1)}=0;\label{v11-22-2D}\\
|\partial_{x_1x_1}({\bf v}^{1}_{1})^{(2)}|&\leq\frac{C|x_{1}|}{\delta(x_{1})},\quad\mbox{and}~
|\partial_{x_1}\bar{p}_1^1|\leq\frac{C}{\delta(x_1)}\nonumber.
\end{align}
Therefore,
\begin{align}\label{estdivv11p1}
|{\bf f}_{1}^{1}|=\left|\mu\Delta{\bf v}_{1}^1-\nabla\bar{p}_1^1\right|\leq \frac{C}{\delta(x_{1})}.
\end{align}

By virtue of divergence free condition, \eqref{freev112D}, using estimates  \eqref{v11-22-2D}, \eqref{estv112}, and thanks to Lemma \ref{lemmaenergy}, we can prove
\begin{align*}
\int_{\Omega}|\nabla {\bf w}_1^1|^{2}\mathrm{d}x\leq C.
\end{align*}
Then, combining with \eqref{estsigvp} and applying the iteration process in Lemma \ref{lem3.1} allows us to show that
\begin{align}\label{estw11narrow}
\int_{\Omega_{\delta}(z_{1})}|\nabla {\bf w}_1^1|^{2}\mathrm{d}x\leq C\delta^2(z_1).
\end{align}
Substituting \eqref{estw11narrow} and \eqref{estdivv11p1} into \eqref{W2pstokes},
we conclude that the above $\nabla {\bf v}_{1}^1$ and $\bar{p}_1^1$ are the main singular terms of $\nabla {\bf u}_{1}^1$ and $p_1^1$, respectively.   

Let us denote the total energy in $\Omega$ by
\begin{align}\label{wialpha}
E_{i}^{\alpha}:=\int_{\Omega}|\nabla{\bf w}_{i}^{\alpha}|^{2}\mathrm{d}x,\quad\alpha=1,2,3.
\end{align}
The above properties of ${\bf v}_{1}^{1}$ and $\bar{p}_1^{1}$ enables us to obtain the boundedness of $E_{i}^{1}$,  which is a very important step to employ our iteration approach.
\begin{lemma}\label{lem3.0}
Let $({\bf w}_i^1,q_i^1)$ be the solution to \eqref{w1}. Then
\begin{align}\label{w1alpha}
E_{i}^{1}=\int_{\Omega}|\nabla{\bf w}_{i}^{1}|^{2}\mathrm{d}x\leq\,C,\quad i=1,2.
\end{align}
\end{lemma}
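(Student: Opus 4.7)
\textbf{Proof Proposal for Lemma \ref{lem3.0}.}

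The plan is to invoke Lemma \ref{lemmaenergy} directly with $({\bf v},\bar p,{\bf f}) = ({\bf v}_i^1,\bar p_i^1,{\bf f}_i^1)$, so the entire task reduces to checking its two hypotheses. The boundary bound $\|{\bf v}_1^1\|_{C^2(\Omega\setminus\Omega_R)} + \|\bar p_1^1\|_{C^1(\Omega\setminus\Omega_R)} \leq C$ is built into the construction in Subsection \ref{auxiliary}, so only the narrow-region hypothesis \eqref{int-fw} requires work. For that, the essential input is the pointwise estimate $|{\bf f}_1^1|\leq C/\delta(x_1)$ in $\Omega_{2R}$, which is precisely \eqref{estdivv11p1} and which arises from two cancellations: the identity \eqref{v11p11} eliminates what would otherwise be the $1/\delta^2$ singularity of $\partial_{x_2}({\bf v}_1^1)^{(2)}$ against $\bar p_1^1$, and in the first component the vanishing of $\partial_{x_2 x_2}({\bf v}_1^1)^{(1)}$ combines with the explicit form of $\partial_{x_1}\bar p_1^1$ to reduce $\mu\Delta({\bf v}_1^1)^{(1)}-\partial_{x_1}\bar p_1^1$ from a potential $1/\delta^2$ term down to order $1/\delta$.

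Given $|{\bf f}_1^1|\leq C/\delta(x_1)$ in $\Omega_R$ and $|{\bf f}_1^1|\leq C$ on $\Omega\setminus\Omega_R$, the remaining ingredient is a one-dimensional Poincar\'e inequality along the $x_2$-direction: since ${\bf w}_1^1$ vanishes on $\partial D_1\cup\partial D_2$, for each fixed $x_1\in(-R,R)$ one has
\begin{equation*}
\int_{-\varepsilon/2-h_2(x_1)}^{\varepsilon/2+h_1(x_1)} |{\bf w}_1^1|^2\,\mathrm{d}x_2 \leq C\,\delta^2(x_1)\int |\partial_{x_2}{\bf w}_1^1|^2\,\mathrm{d}x_2,
\end{equation*}
hence $\int |{\bf w}_1^1|\,\mathrm{d}x_2 \leq C\,\delta^{3/2}(x_1)\bigl(\int|\partial_{x_2}{\bf w}_1^1|^2\,\mathrm{d}x_2\bigr)^{1/2}$. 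Multiplying by $C/\delta(x_1)$, integrating in $x_1$, and applying Cauchy--Schwarz yields
\begin{equation*}
\Bigl|\int_{\Omega_R}{\bf f}_1^1\cdot{\bf w}_1^1\,\mathrm{d}x\Bigr|
\leq C\int_{-R}^R \delta^{1/2}(x_1)\Bigl(\int|\partial_{x_2}{\bf w}_1^1|^2\,\mathrm{d}x_2\Bigr)^{1/2}\mathrm{d}x_1
\leq C\Bigl(\int_{-R}^R\delta(x_1)\,\mathrm{d}x_1\Bigr)^{1/2}\Bigl(\int_\Omega|\nabla{\bf w}_1^1|^2\Bigr)^{1/2},
\end{equation*}
and since $\int_{-R}^R\delta(x_1)\,\mathrm{d}x_1=\int_{-R}^R(\varepsilon+x_1^2)\,\mathrm{d}x_1\leq C$ uniformly in $\varepsilon$, this delivers exactly the bound \eqref{int-fw}.

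With both hypotheses of Lemma \ref{lemmaenergy} verified, the conclusion $E_i^1\leq C$ is immediate. The case $i=2$ follows by the symmetric construction using $\tilde k(x)=-x_2/\delta(x_1)$ in place of $k(x)$, as noted in Subsection \ref{auxiliary}. The main obstacle in the argument above is not the Poincar\'e/Cauchy--Schwarz bookkeeping but rather the a priori design of $\bar p_1^1$: the choice \eqref{defp11} is what makes $|{\bf f}_1^1|$ decay like $1/\delta$ instead of $1/\delta^2$, and without that cancellation the integral $\int_{\Omega_R}\delta^{-2}\cdot|w|$ would not be controllable by the global energy. Thus the nontrivial content is packaged into the construction of $\bar p_1^1$ in the previous subsection, and the proof of the lemma itself is a relatively clean verification.
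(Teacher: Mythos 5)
Your proof is correct, but it verifies the key hypothesis \eqref{int-fw} by a genuinely different route than the paper. The paper never uses the pointwise bound \eqref{estdivv11p1} at this stage: instead it integrates by parts in $x_1$, exploiting $\partial_{x_2x_2}({\bf v}_1^1)^{(1)}=0$ from \eqref{v11-22-2D} and the cancellation \eqref{v11p11} so that only $\partial_{x_1}$-derivatives of ${\bf v}_1^1$ and $\bar p_1^1$ appear, then uses the fact that $\partial_{x_1}({\bf v}_1^1)^{(1)}$, $\partial_{x_1}({\bf v}_1^1)^{(2)}$ and $\bar p_1^1$ are in $L^2(\Omega_{r_0})$ uniformly in $\varepsilon$, together with a slice estimate \eqref{w1Dw1} at a mean-value radius $r_0\in(R,\tfrac32 R)$ to control the resulting boundary term. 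You instead keep ${\bf f}_1^1$ intact, use $|{\bf f}_1^1|\leq C/\delta(x_1)$, the one-dimensional Poincar\'e inequality in the thin direction (legitimate, since ${\bf w}_1^1$ vanishes on both $\partial D_1$ and $\partial D_2$, exactly as in \eqref{estwDwl2}), and a weighted Cauchy--Schwarz in $x_1$ with the observation $\int_{-R}^R\delta(x_1)\,\mathrm{d}x_1\leq C$; this avoids the choice of $r_0$ and the boundary term altogether and is arguably shorter and more transparent. Your diagnosis of where the real content lies is also accurate: both arguments hinge on the choice \eqref{defp11} of $\bar p_1^1$, which reduces ${\bf f}_1^1$ from order $\delta^{-2}$ to order $\delta^{-1}$; with a $\delta^{-2}$ right-hand side your weight would become $\delta^{-1/2}$, whose square is not integrable, so the bound would fail. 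One further remark: your weighted-Poincar\'e computation is robust enough that the same one-line criterion $\int_{-R}^R\delta^3(x_1)\,\|{\bf f}(x_1,\cdot)\|_{L^\infty}^2\,\mathrm{d}x_1\leq C$ would also dispatch the cases $\alpha=2,3$ (where $|{\bf f}_1^\alpha|\lesssim |x_1|/\delta^2$ or $1/\delta$), whereas the paper handles those by rewriting ${\bf f}$ as a polynomial in $x_2$ and integrating by parts in $x_2$ (Lemma \ref{lem_energyw12}); the paper's integration-by-parts framework is thus more uniform with the rest of Section \ref{sec_estimate2D}, while your argument buys brevity and self-containedness.
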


\begin{proof}
Here and throughout this paper, we only prove the case of $i=1$  for instance, since the case $i=2$ is the same. From Lemma  \ref{lemmaenergy},  the proof of inequality \eqref{w1alpha} is reduced to proving condition \eqref{int-fw}, i.e.
\begin{align*}
\Big| \int_{\Omega_{R}}\sum_{j=1}^{2}({\bf f}_1^1)^{(j)}({\bf w}_1^1)^{(j)}\mathrm{d}x\Big|\leq\,C\left(\int_{\Omega}|\nabla {\bf w}_1^1|^2\mathrm{d}x\right)^{1/2}.
\end{align*}
For this, first as in \cite{BLL}, by mean value theorem and Poincar\'e inequality, there exists $r_{0}\in(R,\frac{3}{2}R)$ such that
\begin{align}\label{w1Dw1}
\int_{\substack{|x_1|=r_{0},\\-\epsilon/2-h_{2}(x_1)<x_{2}<\epsilon/2+h_{1}(x_1)}}|{\bf w}_1^1|\mathrm{d}x_{2}
&\leq C\left(\int_{\Omega}|\nabla {\bf w}_1^1|^2\mathrm{d}x\right)^{1/2}.
\end{align}

Applying the integration by parts with respect to $x_1$, for $j=1$, using $\partial_{x_2x_2}({\bf v}^{1}_{1})^{(1)}=0$ in \eqref{v11-22-2D}, we have 
\begin{align*}
\left|\int_{\Omega_{r_{0}}} ({\bf f}_1^1)^{(1)}({\bf w}_1^1)^{(1)}\mathrm{d}x\right|
&=\left|\int_{\Omega_{r_{0}}} ({\bf w}_1^1)^{(1)}(\mu\partial_{x_1x_1}({\bf v}_1^1)^{(1)}-\partial_{x_1}\bar{p}_1^1)\mathrm{d}x\right|\nonumber\\
&\leq\,C\int_{\Omega_{r_{0}}}\big(|\partial_{x_1}({\bf w}_1^1)^{(1)}||\partial_{x_1}({\bf v}_1^1)^{(1)}|+|\partial_{x_1}({\bf w}_1^1)^{(1)}||\bar{p}_1^1|\big)\mathrm{d}x\nonumber\\
&\quad+C\,\int_{\substack{|x_1|=r_{0},\\-\epsilon/2-h_{2}(x_1)<x_{2}<\epsilon/2+h_{1}(x_1)}}|({\bf w}_1^1)^{(1)}|\mathrm{d}x_{2}=:C\mathrm{I}_1+C\,\mathrm{I}_2^{(1)};
\end{align*}
and for $j=2$, using \eqref{v11p11}, we obtain
\begin{align*}
&\left|\int_{\Omega_{r_{0}}} ({\bf f}_1^1)^{(2)}({\bf w}_1^1)^{(2)}\mathrm{d}x\right|=\left|\mu\int_{\Omega_{r_{0}}} ({\bf w}_1^1)^{(2)}\partial_{x_1x_1}({\bf v}_1^1)^{(2)}\mathrm{d}x\right|\nonumber\\
\leq&\,C\int_{\Omega_{r_{0}}}|\partial_{x_1}({\bf w}_1^1)^{(2)}||\partial_{x_1}({\bf v}_1^1)^{(2)}|\mathrm{d}x+C\int_{\substack{|x_1|=r_{0},\\-\epsilon/2-h_{2}(x_1)<x_{2}<\epsilon/2+h_{1}(x_1)}}|({\bf w}_1^1)^{(2)}|\mathrm{d}x_{2}\nonumber\\
=&\,:C\mathrm{I}_2+C\,\mathrm{I}_2^{(2)}.
\end{align*}
Applying H\"{o}lder's inequality and using \eqref{estv112}, \eqref{estv1122}, and \eqref{defp11},
\begin{align*}
|\mathrm{I}_1|&\leq C\left(\int_{\Omega_{r_{0}}}|\partial_{x_1}({\bf v}_1^1)^{(1)}|^{2}+|\bar{p}_1^1|^2\mathrm{d}x\right)^{1/2}
\left(\int_{\Omega} |\partial_{x_1}({\bf w}_1^1)^{(2)}|^2\mathrm{d}x\right)^{1/2}\nonumber\\
&\leq C \left(\int_{\Omega} |\nabla {\bf w}_1^1|^2\mathrm{d}x\right)^{1/2},
\end{align*}
and
\begin{align*}
|\mathrm{I}_2|&\leq C\left(\int_{\Omega_{r_{0}}}|\partial_{x_1}({\bf v}_1^1)^{(2)}|^{2}\mathrm{d}x\right)^{1/2}
\left(\int_{\Omega} |\partial_{x_1}({\bf w}_1^1)^{(2)}|^2\mathrm{d}x\right)^{1/2}\leq C \left(\int_{\Omega} |\nabla {\bf w}_1^1|^2\mathrm{d}x\right)^{1/2}.
\end{align*}
Together with \eqref{w1Dw1}, we derive \eqref{int-fw}. Then, thanks to Lemma  \ref{lemmaenergy}, the proof of Lemma \ref{lem3.0} is finished.
\end{proof}

\begin{remark}
With \eqref{w1alpha}, as mentioned before, we can directly apply classical elliptic estimates, see \cite{ADN1959,ADN1964,Solonni1966}, to obtain $\|\nabla{\bf w}_{i}^{1}\|_{L^{\infty}(\Omega\setminus\Omega_{R})}\leq\,C$. So we only focus on the estimates in $\Omega_{R}$.
\end{remark}

Integrating \eqref{estdivv11p1}, we have
\begin{align}\label{estsigvp}
\int_{\Omega_{s}(z_1)}|{\bf f}_{1}^{1}|^{2}\mathrm{d}x\leq
\frac{Cs}{\delta(z_1)}.
\end{align}
This and \eqref{w1alpha} are good enough to employ the adapted iteration technique to obtain the following local energy estimates.

\begin{lemma}\label{lem3.1}
Let $({\bf w}_i^1,q_i^1)$ be the solution to \eqref{w1}. Then
\begin{align}\label{estw11narrow3D}
\int_{\Omega_{\delta}(z_1)}|\nabla {\bf w}_i^1|^{2}\mathrm{d}x\leq C\delta^2(z_1),\quad i=1,2.
\end{align}
\end{lemma}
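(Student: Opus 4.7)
The plan is to apply the Caccioppoli-type inequality \eqref{iterating1} iteratively, starting from the global scale $s = R$ where the energy bound \eqref{w1alpha} from Lemma \ref{lem3.0} delivers a universal constant, and telescoping downward to the scale $t = \delta(z_1)$. Writing $\delta := \delta(z_1)$ and $F(s) := \int_{\Omega_s(z_1)}|\nabla {\bf w}_i^1|^2\,\mathrm{d}x$, the three ingredients I will use are the Caccioppoli estimate \eqref{iterating1}, the forcing bound \eqref{estsigvp} giving $\int_{\Omega_s(z_1)}|{\bf f}_i^1|^2\,\mathrm{d}x \leq Cs/\delta$ for every $s\leq R$, and the global energy bound $F(R)\leq C$.

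First I would fix a large constant $c$ (to be chosen) and form the arithmetic sequence of radii $t_k := \delta + kc\delta$, $k=0,1,\ldots,N$, where $N$ is the largest integer with $t_N\leq R$ (so $N$ is comparable to $R/(c\delta)$). Applying \eqref{iterating1} with $t=t_k$, $s=t_{k+1}$ and $s-t=c\delta$, combined with \eqref{estsigvp}, gives
\begin{align*}
F(t_k) \leq \frac{C_*\delta^2}{(c\delta)^2}\,F(t_{k+1}) + C\bigl((c\delta)^2+\delta^2\bigr)\frac{Ct_{k+1}}{\delta} \leq \frac{C_*}{c^2}\,F(t_{k+1})+Cc^2\delta\,t_{k+1}.
\end{align*}
Choosing $c$ so large that $C_*/c^2\leq 1/4$, this reduces to $F(t_k)\leq \tfrac14 F(t_{k+1})+Cc^2\delta\,t_{k+1}$, and iterating from $k=0$ to $k=N-1$ yields
\begin{align*}
F(\delta) \leq 4^{-N}F(t_N) + Cc^2\delta\sum_{k=1}^{N}4^{-(k-1)}t_k.
\end{align*}

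Since $t_k=(1+kc)\delta$, the geometric sum $\sum_{k\geq 1}4^{-(k-1)}(1+kc)\delta$ is bounded by $C\delta$, and the global bound $F(t_N)\leq F(R)\leq C$ closes the recursion, giving $F(\delta)\leq C\cdot 4^{-N}+C\delta^2$. Because $N\sim R/(c\delta)$, the term $4^{-N}$ decays like $e^{-c'/\delta}$ and is therefore dominated by $\delta^2$ for small $\delta$, while for $\delta$ bounded away from zero both terms are uniformly bounded and trivially controlled by $C\delta^2$ after enlarging $C$. This establishes \eqref{estw11narrow3D}.

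The only technical point to watch is the balance in the Caccioppoli inequality: the increment $s-t = c\delta$ must be chosen large enough that the contraction factor $C_*/c^2$ is strictly less than one, while the accumulated forcing contribution $Cc^2\delta\,t_{k+1}$ must remain summable to $O(\delta^2)$. The two $\delta^2$'s are however perfectly matched: the factor $\delta^2/(s-t)^2$ in the energy term drives the geometric contraction, and the $(s-t)^2$ factor in the forcing term together with the $s/\delta$ scaling of $\int|{\bf f}_i^1|^2$ yields precisely the $\delta^2$ on the right-hand side.
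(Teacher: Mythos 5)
Your overall architecture (Caccioppoli inequality \eqref{iterating1} $+$ geometric iteration $+$ the global bound \eqref{w1alpha}) is the same as the paper's, but there is a genuine gap in how far you iterate: you invoke \eqref{estsigvp} ``for every $s\le R$'', and your telescoping sum $\sum_k 4^{-(k-1)}t_k\lesssim\delta$ relies on this linear-in-$s$ forcing bound at scales $t_k$ all the way up to $R$. The bound \eqref{estsigvp} is obtained by integrating $|{\bf f}_1^1|\le C/\delta(x_1)$ over $\Omega_s(z_1)$, which gives $C\int_{|x_1-z_1|<s}\delta(x_1)^{-1}dx_1$; this is $\le Cs/\delta(z_1)$ only as long as $\delta(x_1)\sim\delta(z_1)$ on the interval, i.e.\ for $s$ at most a fixed fraction of $\sqrt{\delta(z_1)}$. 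Once $\Omega_s(z_1)$ reaches the center of the neck the estimate fails: take $z_1=\varepsilon^{1/4}$ and $s=2\varepsilon^{1/4}$; then $\int_{\Omega_s(z_1)}|{\bf f}_1^1|^2\,\mathrm{d}x\gtrsim \varepsilon^{-1/2}$ (since $|{\bf f}_1^1|\sim\varepsilon^{-1}$ on a set of measure $\sim\varepsilon^{3/2}$ near the origin), while $Cs/\delta(z_1)\sim C\varepsilon^{-1/4}$. So the inequality you quote is false in most of the range $t_k\gtrsim\sqrt{\delta(z_1)}$ over which you use it.

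This is not merely cosmetic: if you replace \eqref{estsigvp} in the ``bad'' range by the true bound $\int_{\Omega_s(z_1)}|{\bf f}_1^1|^2\lesssim\varepsilon^{-1/2}$ and keep iterating with step $c\delta$ up to scale $R$, the tail of your sum contributes about $4^{-k_1}\delta^2\varepsilon^{-1/2}$ with $k_1\sim 1/(c|z_1|)$, which for fixed $|z_1|$ and $\varepsilon\to0$ is not $O(\delta^2)$; the scheme of running the recursion down from scale $R$ therefore does not close. The repair is exactly what the paper does: stop the iteration at radius comparable to $\sqrt{\delta(z_1)}$ (it takes $k_0=\big[\tfrac{1}{4c_0\sqrt{\delta(z_1)}}\big]$ steps of size $2c_0\delta(z_1)$), where \eqref{estsigvp} is legitimate because $\delta(x_1)\sim\delta(z_1)$ throughout $\Omega_{t_{k_0}}(z_1)$, and then estimate $E(t_{k_0})$ by the global energy \eqref{w1alpha}; the number of steps $\sim\delta^{-1/2}$ still makes the contracted term $4^{-k_0}$ exponentially small compared with $\delta^2$, so \eqref{estw11narrow3D} follows. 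With that truncation your argument becomes the paper's proof; as written, the justification of the forcing sum is invalid.
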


\begin{proof} 
The iteration scheme we use in the proof is in
spirit similar  to that used in \cite{LLBY,BLL2}. Let us set
$$E(t):=\int_{\Omega_{t}(z_1)}|\nabla {\bf w}_1^1|^{2}\mathrm{d}x.$$
Combining \eqref{estsigvp} with \eqref{iterating1} yields
\begin{align}\label{iteration3D}
E(t)\leq \left(\frac{c_{0}\delta(z_1)}{s-t}\right)^2 E(s)+C\left((s-t)^{2}+\delta^2(z_1)\right)\frac{s}{\delta(z_1)},
\end{align}
where $c_{0}$ is a constant and we fix it now. Let $k_{0}=\left[\frac{1}{4c_{0}\sqrt{\delta(z_1)}}\right]$ and $t_{i}=\delta(z_1)+2c_{0}i\delta(z_1), i=0,1,2,\dots,k_{0}$. 
So, applying \eqref{iteration3D} with $s=t_{i+1}$ and $t=t_{i}$, we have the following iteration formula:
\begin{align*}
E(t_{i})\leq \frac{1}{4}E(t_{i+1})+C(i+1)\delta^2(z_1).
\end{align*}
After $k_{0}$ iterations, and by virtue of \eqref{w1alpha}, we obtain
\begin{align*}
E(t_0)&\leq \left(\frac{1}{4}\right)^{k}E(t_{k})
+C\delta^2(z_1)\sum\limits_{l=0}^{k-1}\left(\frac{1}{4}\right)^{l}(l+1)\nonumber\\
&\leq \left(\frac{1}{4}\right)^{k}E_{1}^{\alpha}+C\delta^2(z_1)\sum\limits_{l=0}^{k-1}\left(\frac{1}{4}\right)^{l}(l+1)\leq C\delta^2(z_1),
\end{align*}
for sufficiently small $\varepsilon$ and $|z_1|$.  As a consequence,  \eqref{estw11narrow3D} is proved.
\end{proof}

We are now in a position to prove Proposition \ref{propu11}.
\begin{proof}[Proof of Proposition \ref{propu11}.]
By using \eqref{estw11narrow3D} and \eqref{estdivv11p1}, and applying \eqref{W2pstokes}, we have 
\begin{align*}
\|\nabla {\bf w}_1^1\|_{L^{\infty}(\Omega_{\delta/2}(z_1))}
\leq
\,C\left(\delta^{-1}\|\nabla {\bf w}_1^1\|_{L^{2}(\Omega_{\delta}(z_1))}+\delta\|{\bf f}_1^1\|_{L^{\infty}(\Omega_{\delta}(z_1))} \right)\leq\,C.
\end{align*} 
Since ${\bf w}_1^1={\bf u}_1^1-{\bf v}_1^1$, it follows from \eqref{v11-1d} that
\begin{align*}
|\nabla {\bf u}_1^1(z)|&\leq|\nabla {\bf v}_1^1(z)|+|\nabla {\bf w}_1^1(z)|\leq\frac{C}{\delta(z_1)},\quad \quad~z\in\Omega_{R},\\
|\nabla {\bf u}_1^1(0',z_{2})|&\geq|\nabla {\bf v}_1^1(0',z_{2})|-C\geq\frac{1}{C\delta(z_1)},\quad |z_{2}|\leq\frac{\varepsilon}{2}.
\end{align*}
Similarly, we obtain from \eqref{Wmpstokes} that  
\begin{align*}
&\|\nabla^2 {\bf w}_1^1\|_{L^{\infty}(\Omega_{\delta/2}(z_1))}+\|\nabla q_1^1\|_{L^{\infty}(\Omega_{\delta/2}(z_1))}\\
&\leq
C\left(\delta^{-2}\|\nabla {\bf w}_1^1\|_{L^{2}(\Omega_{\delta}(z_1))}+\|{\bf f}_1^1\|_{L^{\infty}(\Omega_{\delta}(z_1))}+\delta\|\nabla{\bf f}_1^1\|_{L^{\infty}(\Omega_{\delta}(z_1))} \right).
\end{align*}
By \eqref{v11}, \eqref{defp11}, \eqref{v11-22-2D}, and using a direct computation, we derive 
\begin{align*}
|{\bf f}_1^1|+\delta|\nabla{\bf f}_1^1|\leq \frac{C}{\delta(z_1)},\quad z\in\Omega_R.
\end{align*}
Then together with \eqref{estw11narrow3D}, we obtain
\begin{align*}
\|\nabla^2 {\bf w}_1^1\|_{L^{\infty}(\Omega_{\delta/2}(z_1))}+\|\nabla q_1^1\|_{L^{\infty}(\Omega_{\delta/2}(z_1))}\leq\frac{C}{\delta(z_1)}.
\end{align*}
This in combination with ${\bf w}_1^1={\bf u}_1^1-{\bf v}_1^1$ and \eqref{v11} yields
$$|\nabla^2 {\bf u}_1^1|\leq C\left(\frac{1}{\delta(z_{1})}+\frac{|z_1|}{\delta^2(z_{1})}\right),\quad z\in\Omega_R.$$
Combining $q_1^1=p_1^1-\bar{p}_1^1$,  the mean value theorem, and \eqref{defp11}, we have 
\begin{align*}
|p_{1}^{1}(z)-(q_1^1)_{R}|\leq |q_1^{1}(z)-(q_1^1)_{R}|+|\bar{p}_1^{1}(z)|\leq\frac{C}{\varepsilon},\quad z\in\Omega_R,
\end{align*}
where $(q_1^1)_{R}$ is a constant independent of $\varepsilon$ defined in \eqref{defqialpha}. Similarly, 
$$|\nabla p_{1}^{1}(z)|\leq |\nabla q_1^{1}(z)|+|\nabla\bar{p}_1^{1}(z)|\leq C\left(\frac{1}{\delta(z_1)}+\frac{|z_1|}{\delta^2(z_1)}\right),\quad z\in\Omega_R.$$
Therefore, Proposition \ref{propu11} is proved. 
\end{proof}

\begin{remark}
From the above calculations, our method works well for general $h_{1}(x_1)$ and $h_{2}(x_1)$. For example, if $h_{1}=h_{2}=h(x_1)$ satisfies \eqref{h1-h2}--\eqref{h1h14}, then we take
$${\bf v}_{1}^1=\boldsymbol\psi_{1}\Big(k(x)+\frac{1}{2}\Big)+\frac{\partial_{x_{1}}(h_{1}+h_{2})(x_1)}{2}\boldsymbol\psi_{2}\Big(k^2(x)-\frac{1}{4}\Big),\quad\hbox{in}\ \Omega_{2R}.
$$
It is easy to check that $\nabla\cdot{\bf v}_{1}^1=0$ in $\Omega_{2R}$. As far as the estimates of ${\bf v}_{1}^1$ and ${\bf f}_{1}^1$ are concerned, only slight modification  is needed. 
\end{remark}

\begin{remark}
For $m\geq2$, by \eqref{Wmpstokes} and replicating the same argument as above, we obtain
\begin{align}\label{generalm}
\|\nabla^{m+1}{\bf w}_1^1\|_{L^{\infty}(\Omega_{\delta/2}(z_1))}+\|\nabla^{m} q_1^1\|_{L^{\infty}(\Omega_{\delta/2}(z_1))}\leq\frac{C}{\delta^{m}(z_1)}.
\end{align}
However, recalling the definition of ${\bf v}_1^1$ in \eqref{v11} and using direct calculations, we have for any $z\in\Omega_R$,
$$|\nabla^{m+1}{\bf v}_1^1|\leq
\begin{cases}
C\left(\frac{1}{\delta^{2}(z_1)}+\frac{|z_1|}{\delta^{3}(z_1)}\right),&\quad m=2,\\
\frac{C}{\delta^{3}(z_1)},&\quad m=3,\\
C\left(\frac{1}{\delta^{m-1}(z_1)}+\frac{|z_1|}{\delta^{m}(z_1)}\right),&\quad m\geq4.
\end{cases}$$
Compared to \eqref{generalm}, one can see that $\nabla^{m+1}{\bf v}_1^1$ cannot capture the main singular term of $\nabla^{m+1}{\bf u}_1^1$ any more when $m\geq3$. Therefore, to prove higher-order partial derivatives,  new auxiliary functions are needed.
\end{remark}

\subsection{Estimates of $|\nabla{\bf u}_{i}^{2}|$ and $|p_{i}^{2}|$}\label{subsec2}
Using \eqref{v12} and \eqref{p12}, a direct calculation yields
\begin{align*}
\mu\partial_{x_2} ({\bf v}_{1}^2)^{(2)}-\bar{p}_1^2=\frac{3\mu}{2\delta(x_1)}+\frac{3\mu}{\delta^{2}(x_{1})}(1-x_{1}^{2}),
\end{align*}
which directly implies that
\begin{align}\label{estv122}
\mu\partial_{x_2x_2} ({\bf v}_{1}^2)^{(2)}-\partial_{x_2}\bar{p}_1^2=0.
\end{align}
Moreover,
\begin{equation*}
\partial_{x_1}\bar{p}_1^2=\frac{12\mu x_1}{\delta^3(x_1)}-\frac{36\mu x_1}{\delta^2(x_1)}\left(\frac{8x_{1}^{2}}{\delta(x_{1})}-3\right)k^{2}(x).
\end{equation*}
It also makes 
\begin{align}\label{estp121}
\left|\mu\partial_{x_2x_2} ({\bf v}_{1}^2)^{(1)}-\partial_{x_1}\bar{p}_1^2\right|=\left|\frac{36\mu x_1}{\delta^2(x_1)}\left(\frac{8x_{1}^{2}}{\delta(x_{1})}-3\right)k^{2}(x)\right|\leq \frac{C|x_1|}{\delta^2(x_1)},
\end{align}
which is smaller than $|\partial_{x_2x_2} ({\bf v}_{1}^2)^{(1)}|\sim\frac{1}{\delta^{2}(x_{1})}$.
The following estimates are also needed:
\begin{equation}\label{estv1211}
\partial_{x_1}({\bf v}_{1}^{2})^{(1)}=\frac{6}{\delta(x_1)}\Big(1-\frac{2x_1^2}{\delta(x_1)}\Big)\big(k^2(x)-\frac{1}{4}\big)-\frac{24x_1^2x_2}{\delta^3(x_1)}k(x),
\end{equation}
\begin{align}
\frac{1}{C\delta(x_1)}&\leq|\partial_{x_2}({\bf v}_{1}^{2})^{(2)}|\leq\frac{C}{\delta(x_1)},\label{estv1210}\\
\partial_{x_2}({\bf v}_{1}^{2})^{(1)}&=\frac{12x_{1}}{\delta^{2}(x_{1})}k(x),\quad\quad
\left|\partial_{x_1}({\bf v}_{1}^{2})^{(2)}\right|\leq\frac{C|x_{1}|}{\delta(x_{1})};\label{estv121}
\end{align}
and
\begin{align}\label{estv121p122}
\left|\partial_{x_1x_1}({\bf v}_{1}^{2})^{(1)}\right|\leq\frac{C|x_1|}{\delta^2(x_1)},\quad
\left|\partial_{x_1x_1}({\bf v}_{1}^{2})^{(2)}\right|\leq\frac{C}{\delta(x_{1})}.
\end{align}
Therefore, we have
\begin{align}\label{estDivv12}
|{\bf f}_1^2|=\left|\mu\Delta{\bf v}_{1}^2-\nabla\bar{p}_1^2\right|\leq\frac{C|x_1|}{\delta^{2}(x_{1})}.
\end{align}

Using \eqref{estv1210} and \eqref{estv121}, we have 
\begin{equation}\label{v13-0}
\int_{\Omega}|\nabla {\bf v}_{1}^{2}|^{2}\mathrm{d}x\leq C\int_{\Omega}\left(\frac{1}{\delta(x_1)}+\frac{|x_1|}{\delta^{2}(x_1)}\right)^{2}\mathrm{d}x\leq\frac{C}{\varepsilon^{3/2}},
\end{equation}
it is impossible prove the boundedness of the global energy of ${\bf w}_{1}^{2}$ by directly applying Lemma \ref{lem3.0} (see the proof of Lemma \ref{lem3.0}). So we must improve our technique in the proof of Lemma \ref{lem_energyw12} below. 

\begin{lemma}\label{lem_energyw12}
Let $({\bf w}_i^2,q_i^2)$ be the solution to \eqref{w1}. Then
\begin{align*}
\int_{\Omega}|\nabla {\bf w}_{i}^{2}|^{2}\mathrm{d}x\leq C,\quad\,i=1,2.
\end{align*}
\end{lemma}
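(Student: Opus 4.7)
The strategy is to invoke Lemma~\ref{lemmaenergy}. The hypothesis \eqref{estv113D3} is built into the construction of $({\bf v}_1^2,\bar{p}_1^2)$, so only the bound
\[
\left|\int_{\Omega_R}\sum_{j=1}^{2}({\bf f}_1^2)^{(j)}({\bf w}_1^2)^{(j)}\,\mathrm{d}x\right| \leq C\Big(\int_\Omega |\nabla {\bf w}_1^2|^2\,\mathrm{d}x\Big)^{1/2}
\]
needs to be verified. A naive estimate using only the pointwise bound \eqref{estDivv12} is hopeless, since it yields $\|{\bf f}_1^2\|_{L^2(\Omega_R)}\sim\varepsilon^{-3/4}$; this is precisely the ``hardest issue'' alluded to in the introduction.

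The key observation is that, by the explicit formulas \eqref{v12} and \eqref{p12}, every component of ${\bf f}_1^2$ is a polynomial in $x_2$ with coefficients that are functions of $x_1$ alone. For the second component, identity \eqref{estv122} collapses $({\bf f}_1^2)^{(2)}$ to $\mu\partial_{x_1x_1}({\bf v}_1^2)^{(2)}$, which is polynomial of degree three in $x_2$. For the first component I would write
\[
({\bf f}_1^2)^{(1)} = \mu\partial_{x_1x_1}({\bf v}_1^2)^{(1)} + \bigl(\mu\partial_{x_2x_2}({\bf v}_1^2)^{(1)} - \partial_{x_1}\bar{p}_1^2\bigr),
\]
where the bracketed expression equals $\frac{36\mu x_1}{\delta^2(x_1)}\bigl(\frac{8x_1^2}{\delta(x_1)}-3\bigr)k^2(x)$ by \eqref{estp121} and is therefore quadratic in $x_2$ through the factor $k^2(x)=x_2^2/\delta^2(x_1)$. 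The judicious choice of $\bar{p}_1^2$ in \eqref{p12} was engineered precisely to guarantee this cancellation, so that every surviving monomial in $({\bf f}_1^2)^{(1)}$ carries a positive power of $x_2$.

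I would then treat each monomial $P(x_1)x_2^n$ by rewriting it as $\partial_{x_2}\bigl(P(x_1)x_2^{n+1}/(n+1)\bigr)$ and integrating by parts in $x_2$ against $({\bf w}_1^2)^{(j)}$ over $\Omega_R$. The boundary contributions at $x_2=\pm\varepsilon/2\pm h_{1,2}(x_1)$ vanish because ${\bf w}_1^2=0$ on $\partial D_1\cup\partial D_2$, and no lateral boundary term arises since the integration by parts acts in $x_2$ alone. Each antiderivative gains one factor of $x_2=O(\delta(x_1))$, which removes one power of $\delta^{-1}$: for instance, $x_1 x_2^2/\delta^3(x_1)$ becomes $x_1 x_2^3/(3\delta^3(x_1))=O(|x_1|)$ on $\Omega_R$. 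A monomial-by-monomial check then shows that the resulting antiderivatives all lie in $L^2(\Omega_R)$ uniformly in $\varepsilon$, and Cauchy--Schwarz bounds each interior integral by $C\|\nabla {\bf w}_1^2\|_{L^2(\Omega)}$, verifying \eqref{int-fw}. Lemma~\ref{lemmaenergy} then delivers the claim. The main obstacle is the bookkeeping: one has to check term by term that every maximally singular coefficient (of order $\delta^{-3}$) in ${\bf f}_1^2$ is accompanied by at least one power of $x_2$, which rests on the careful algebraic design of both the velocity ansatz \eqref{v12} and the pressure ansatz \eqref{p12}.
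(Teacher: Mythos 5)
Your proposal is correct and is essentially the paper's own argument: reduce the claim to condition \eqref{int-fw} of Lemma \ref{lemmaenergy}, exploit that ${\bf f}_1^2$ is a polynomial in $x_2$ with coefficients rational in $x_1$, antidifferentiate in $x_2$, and integrate by parts using ${\bf w}_1^2=0$ on $\partial D_1\cup\partial D_2$. Two remarks. First, your side claim that ``every surviving monomial in $({\bf f}_1^2)^{(1)}$ carries a positive power of $x_2$'' is literally false: with $\delta=\delta(x_1)$ one has $({\bf v}_1^2)^{(1)}=\frac{6x_1x_2^2}{\delta^3}-\frac{3x_1}{2\delta}$, so $\mu\partial_{x_1x_1}({\bf v}_1^2)^{(1)}$ contains the $x_2$-independent terms $\frac{9\mu x_1}{\delta^2}-\frac{12\mu x_1^3}{\delta^3}$. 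What the pressure ansatz \eqref{p12} actually cancels is the $x_2$-free term $\frac{12\mu x_1}{\delta^3}$ coming from $\mu\partial_{x_2x_2}({\bf v}_1^2)^{(1)}$, whose $x_2$-primitive would be of size $|x_1|/\delta^2$ and not uniformly square-integrable; the surviving $x_2$-free terms are only of size $|x_1|/\delta^2$ and their primitives are $O(|x_1|/\delta)$, hence uniformly in $L^2(\Omega_R)$. This does not damage your proof, since your monomial-by-monomial antidifferentiation handles $x_2$-free terms as well (via $P(x_1)=\partial_{x_2}(P(x_1)x_2)$); it is exactly the bookkeeping the paper performs through the single pointwise bound $|A^{21}_0+A^{21}_2x_2^2|\leq C|x_1|/\delta^2$ obtained from \eqref{estp121} and \eqref{estv121p122}, so you should phrase the criterion as ``coefficients carry a factor $x_1$'' rather than ``monomials carry a power of $x_2$.'' Second, for the component $j=2$ the paper uses \eqref{estv122} and then integrates by parts in $x_1$, which requires the slice estimate \eqref{w1Dw1} at $|x_1|=r_0$ as in Lemma \ref{lem3.0}; you instead integrate by parts in $x_2$ there too. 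Your variant is valid and marginally simpler, since $|\partial_{x_1x_1}({\bf v}_1^2)^{(2)}|\leq C/\delta$ makes its $x_2$-primitive bounded and no lateral boundary terms arise.
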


\begin{proof}
Similarly as in Lemma \ref{lem3.0}, by virtue of Lemma \ref{lemmaenergy}, it suffices to prove
\begin{align}\label{energyDw13}
\Big| \int_{\Omega_{r_{0}}}\sum_{j=1}^{2}({\bf f}_{1}^{2})^{(j)}({\bf w}_{1}^{2})^{(j)}\mathrm{d}x\Big|\leq\,C\left(\int_{\Omega}|\nabla {\bf w}_{1}^{2}|^2\mathrm{d}x\right)^{1/2},
\end{align}
where $r_{0}\in(R,\frac{3}{2}R)$ is fixed by \eqref{w1Dw1}.

First, for $j=1$, 
\begin{align}\label{fw12}
\int_{\Omega_{r_{0}}} ({\bf f}_{1}^{2})^{(1)}({\bf w}_{1}^{2})^{(1)}\mathrm{d}x
=\int_{\Omega_{r_{0}}} ({\bf w}_1^2)^{(1)}(\mu\Delta({\bf v}^{2}_{1})^{(1)}-\partial_{x_{1}}\bar{p}_1^2)\mathrm{d}x.
\end{align}
Note that here we can not use the integration by parts with the respect to $x_1$ any more as in Lemma \ref{lem3.0}, because even if we did, after that the terms $|\partial_{x_1}({\bf v}^{2}_{1})^{(1)}|\leq\frac{C}{\delta(x_1)}$ are still too large to prove \eqref{energyDw13}, see for example \eqref{v13-0}. Observing from \eqref{estp121}, \eqref{estv1211}, and \eqref{estv121}, we can write $({\bf f}_{1}^{2})^{(1)}$ in the polynomial form
$$\mu\Delta({\bf v}^{2}_{1})^{(1)}-\partial_{x_{1}}\bar{p}_1^1:=A^{21}_{0}(x_1)+A^{21}_{2}(x_1)x_{2}^{2},$$
where $A^{21}_{l}(x_1)$ are rational functions of $x_{1}$. In view of \eqref{estp121} and \eqref{estv121p122},  
$$\Big|A^{21}_{0}(x_1)+A^{21}_{2}(x_1)x_{2}^{2}\Big|\leq\frac{C|x_1|}{\delta^{2}(x_1)}.$$
However, by a simple differentiation, we know
$$\partial_{x_{2}}\Big(A^{21}_{0}(x_1)x_{2}+\frac{1}{3}A^{21}_{2}(x_1)x_{2}^{3}\Big)=A^{21}_{0}(x_1)+A^{21}_{2}(x_1)x_{2}^{2},$$
and thus,  
$$\int_{\Omega_{r_{0}}}\Big|A^{21}_{0}(x_1)x_{2}+\frac{1}{3}A^{21}_{2}(x_1)x_{2}^{3}\Big|^{2}\leq\,C\int_{\Omega_{r_{0}}}\frac{|x_1|^{2}}{\delta^{2}(x_1)}\leq\,C.$$
So we rewrite \eqref{fw12} as follows:
\begin{align*}
\int_{\Omega_{r_{0}}} ({\bf f}_{1}^{2})^{(1)}({\bf w}_{1}^{2})^{(1)}\mathrm{d}x
=\int_{\Omega_{r_{0}}} ({\bf w}_{1}^{2})^{(1)}\partial_{x_{2}}\Big(A^{21}_{0}(x_1)x_{2}+\frac{1}{3}A^{21}_{2}(x_1)x_{2}^{3}\Big)\mathrm{d}x.
\end{align*}
Since ${\bf w}_1^2=0$ on $\partial D_{1}\cup \partial D_{2}$, we apply the integration by parts with respect to $x_{2}$, instead of $x_1$ used in Lemma \ref{lem3.0}. By H\"older's inequality and \eqref{w1Dw1}, we deduce 
\begin{align*}
\left|\int_{\Omega_{r_{0}}} ({\bf f}_{1}^{2})^{(1)}({\bf w}_{1}^{2})^{(1)}\mathrm{d}x\right|&\leq\int_{\Omega_{r_{0}}}|\partial_{x_{2}}({\bf w}_{1}^{2})^{(1)}|\Big|A^{21}_{0}(x_1)x_{2}+\frac{1}{3}A^{21}_{2}(x_1)x_{2}^{3}\Big|\mathrm{d}x\nonumber\\
&\quad+C\left(\int_{\Omega}|\nabla {\bf w}_{1}^{2}|^2\mathrm{d}x\right)^{1/2}\leq\,C\left(\int_{\Omega}|\nabla {\bf w}_{1}^{2}|^2\mathrm{d}x\right)^{1/2}.
\end{align*}
Hence,
\begin{equation}\label{estf13w13}
\left|\int_{\Omega_{r_{0}}} ({\bf f}_{1}^{2})^{(1)}({\bf w}_{1}^{2})^{(1)}\mathrm{d}x\right|\leq C\left(\int_{\Omega}|\nabla {\bf w}_{1}^{2}|^2\mathrm{d}x\right)^{1/2}.
\end{equation}

For $j=2$, by means of \eqref{estv122}, we have 
$$({\bf f}_{1}^{2})^{(2)}=\mu\partial_{x_1x_1}({\bf v}^{2}_{1})^{(2)}.$$
In view of \eqref{estv121}, we obtain
$$\int_{\Omega_{r_{0}}}|\partial_{x_1}({\bf v}^{2}_{1})^{(2)}|^{2}\mathrm{d}x\leq\,C\int_{\Omega_{r_{0}}}\frac{|x_1|^{2}}{\delta^{2}(x_1)}\leq\,C,$$
so we can still use the integration by parts with respect to $x_1$, similarly as in Lemma \ref{lemmaenergy}, and  use \eqref{w1Dw1} again to obtain 
\begin{align}\label{estf1333}
\left|\int_{\Omega_{r_{0}}} ({\bf f}_{1}^{2})^{(2)}({\bf w}_{1}^{2})^{(2)}\mathrm{d}x\right|
&=\mu\left|\int_{\Omega_{r_{0}}} ({\bf w}_{1}^{2})^{(2)}\partial_{x_1x_1}({\bf v}^{2}_{1})^{(2)}\mathrm{d}x\right|\nonumber\\
&\leq\mu\int_{\Omega_{r_{0}}}|\partial_{x_1}({\bf w}_{1}^{2})^{(2)}||\partial_{x_1}({\bf v}^{2}_{1})^{(2)}|\mathrm{d}x\nonumber\\
&\quad+\int_{\substack{|x_1|=r_{0},\\-\epsilon/2-h_{2}(x_1)<x_{2}<\epsilon/2+h_{1}(x_1)}}|({\bf w}_{1}^{2})^{(2)}|\mathrm{d}s\nonumber\\
&\leq C\left(\int_{\Omega}|\nabla {\bf w}_{1}^{2}|^2\mathrm{d}x\right)^{1/2}.
\end{align}
Taking into account \eqref{estf13w13} and \eqref{estf1333}, we obtain \eqref{energyDw13}. Thanks to Lemma \ref{lemmaenergy}, the proof of Lemma \ref{lem_energyw12} is finished. 
\end{proof}

Using \eqref{estDivv12}, we obtain
\begin{align}\label{L2_f12}
\int_{\Omega_{s}(z_1)}|{\bf f}_{1}^{2}|^{2}\mathrm{d}x\leq\frac{Cs}{\delta^{3}(z_1)}(s^2+|z_1|^2).
\end{align}
With \eqref{L2_f12} and Lemma \ref{lem_energyw12} at hand, we may argue as before to complete the proof of Proposition \ref{propu12}.

\begin{proof}[Proof of Proposition \ref{propu12}.]
Substituting \eqref{L2_f12} into  \eqref{iterating1}, instead of \eqref{iteration3D}, 
\begin{align*}
\int_{\Omega_{t}(z_1)}|\nabla {\bf w}_{1}^{2}|^{2}\mathrm{d}x&\leq\,
\frac{C\delta^2(z_1)}{(s-t)^{2}}\int_{\Omega_{s}(z_1)}|\nabla {\bf w}_{1}^{2}|^2\mathrm{d}x\\
&\quad+C\left((s-t)^{2}+\delta^{2}(z_1)\right)\frac{s}{\delta^3(z_1)}(s^2+|z_1|^2).
\end{align*}
By the same iteration process used in Lemma \ref{lem3.1}, we deduce 
\begin{align*}
\int_{\Omega_{\delta}(z_1)}|\nabla {\bf w}_1^2|^{2}\mathrm{d}x\leq C\delta^2(z_1).
\end{align*}
Substituting this, together with \eqref{estDivv12}, into \eqref{W2pstokes} yields
\begin{align*}
\|\nabla {\bf w}_1^{2}\|_{L^{\infty}(\Omega_{\delta/2}(z_1))}
\leq\,C\left(1+\frac{|z_1|}{\delta(z_1)}\right)
\leq\,\frac{C}{\sqrt{\delta(z_1)}}.
\end{align*} 
Using \eqref{Wmpstokes} and 
\begin{equation*}
|{\bf f}_1^2|+\delta|\nabla{\bf f}_1^2|\leq\frac{C}{\delta(z_1)}+\frac{C|z_1|}{\delta^{2}(z_1)},
\end{equation*}
we have 
\begin{align*}
\|\nabla^2 {\bf w}_1^{2}\|_{L^{\infty}(\Omega_{\delta/2}(z_1))}+\|\nabla q_1^2\|_{L^{\infty}(\Omega_{\delta/2}(z_1))}\leq\frac{C}{\delta(z_1)}+\frac{C|z_1|}{\delta^{2}(z_1)}.
\end{align*}
Recalling  ${\bf w}_1^{2}={\bf u}_1^{2}-{\bf v}_1^{2}$ and $q_1^2=p_1^2-\bar p_1^2$, and using \eqref{v12upper1}--\eqref{p12}, Proposition \ref{propu12} is proved. 
\end{proof}

\subsection{Estimate of $|\nabla{\bf u}_{i}^{3}|$ and $|p_{i}^{3}|$}\label{subsec3}

Recalling \eqref{v13}, by careful calculations,
\begin{align}\label{v1311}
\partial_{x_1}({\bf v}_{1}^{3})^{(1)}&=-\frac{9x_1}{2}k^{2}(x)+12\left(\frac{-x_1}{\delta(x_1)}+\frac{2x_1^{3}}{\delta^{2}(x_1)}\right)k^{2}(x)+30x_{1}k^{4}(x)\nonumber\\
&\quad+2\left(\frac{x_1}{\delta(x_1)}-\frac{x_1^{3}}{\delta^{2}(x_1)}\right),
\end{align}
\begin{equation}\label{v1321}
\partial_{x_2}({\bf v}_{1}^{3})^{(1)}=\frac{9}{2}k(x)+2\left(\frac{1}{\delta(x_1)}-\frac{4x_1^2}{\delta^2(x_1)}\right)k(x)-20k^{3}(x)+\frac{1}{2},
\end{equation}
\begin{align}\label{v1312}
\partial_{x_1}({\bf v}_{1}^{3})^{(2)}=&-2k(x)-\frac{1}{2}+2(\frac{5x_1^2}{\delta(x_1)}-\frac{4x_1^4}{\delta^2(x_1)})k(x)+\frac{3x_2}{2}k^{2}(x)\nonumber\\
&+(4-6x_1^2-\frac{52x_1^2}{\delta(x_1)}+\frac{64x_1^4}{\delta^2(x_1)})k^{3}(x)+48x_1^2k^{5}(x),
\end{align}
and
\begin{equation}\label{v1322}
\partial_{x_2}({\bf v}_{1}^{3})^{(2)}=-\frac{2x_1}{\delta(x_1)}+\frac{9x_1}{2}k^{2}(x)+\frac{2x_1^3}{\delta^2(x_1)}+12\left(\frac{x_1}{\delta(x_1)}-\frac{2x_1^3}{\delta^2(x_1)}\right)k^{2}(x)-30x_1k^{4}(x).
\end{equation}
Thus,
\begin{equation*}
\nabla\cdot{\bf v}_{1}^{3}=\partial_{x_1}({\bf v}_{1}^{3})^{(1)}+\partial_{x_2}({\bf v}_{1}^{3})^{(2)}=0,\quad \mbox{in}~\Omega_{2R}.
\end{equation*}
But, at this moment, we not only have no $\mu\partial_{x_2}({\bf v}_{1}^{3})^{(2)}-\bar{p}_1^3=0$, even $\mu\partial_{x_2x_2}({\bf v}_{1}^{3})^{(2)}-\partial_{x_2}\bar{p}_1^3=0$. Indeed, we  only have the following upper bounds:
\begin{equation}\label{estv131-p131}
\Big|\mu\partial_{x_2x_2}({\bf v}_{1}^{3})^{(1)}-\partial_{x_1}\bar{p}_1^3\Big|=\mu\Big|\frac{9}{2\delta(x_1)}-\frac{72x_2^2}{\delta^3(x_1)}+\frac{144x_1^2 x_2^2}{\delta^4(x_1)}-\frac{192 x_1^4x_2^2}{\delta^5(x_1)}\Big|\leq\frac{C}{\delta(x_{1})},
\end{equation} 
and 
\begin{equation}\label{estv132-p132}
\Big|\mu\partial_{x_2x_2}({\bf v}_{1}^{3})^{(2)}-\partial_{x_2}\bar{p}_1^3\Big|=\mu\Big|\frac{9x_1x_2}{\delta^2(x_1)}-\frac{120x_1x_2^3}{\delta^4(x_1)}\Big|\leq\frac{C|x_{1}|}{\delta(x_{1})}.
\end{equation} 
Furthermore,
\begin{equation*}
\left|\partial_{x_1x_1}({\bf v}_{1}^{3})^{(1)}\right|\leq\frac{C}{\delta(x_1)},\quad \left|\partial_{x_1x_1}({\bf v}_{1}^{3})^{(2)}\right|\leq\frac{C|x_1|}{\delta(x_1)}.
\end{equation*} 
Therefore,
\begin{align*}
|{\bf f}_1^3|=\left|\mu\Delta {\bf v}_{1}^3-\nabla\bar{p}_1^3\right|\leq \frac{C}{\delta(x_1)},\quad\mbox{in}~\Omega_{2R}.
\end{align*}
This is the same as in \eqref{estdivv11p1} for ${\bf f}_{1}^{1}$. In oder to apply \eqref{iteration3D} and the iteration process as in Lemma \ref{lem3.1}, it remains to prove the boundedness of $E_{i}^{3}$.

\begin{lemma}\label{lem3.4}
Let $({\bf w}_i^3,q_i^3)$ be the solution to \eqref{w1}. Then
\begin{align*}
E_{i}^{3}=\int_{\Omega}|\nabla{\bf w}_{i}^{3}|^{2}\mathrm{d}x\leq\,C.
\end{align*}
\end{lemma}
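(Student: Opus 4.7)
The plan is to mirror the strategy developed for $\alpha=2$ in Lemma \ref{lem_energyw12}, because the naive $L^2$-bound
\begin{equation*}
\int_{\Omega}|\nabla{\bf v}_{1}^{3}|^{2}\,\mathrm{d}x
\end{equation*}
diverges as $\varepsilon\to 0$, so Lemma \ref{lemmaenergy} cannot be applied directly through a crude bound on the right-hand side. Instead, I will verify the hypothesis \eqref{int-fw} of Lemma \ref{lemmaenergy} for ${\bf f}_{1}^{3}=\mu\Delta {\bf v}_{1}^{3}-\nabla\bar{p}_{1}^{3}$ by integrating by parts in the \emph{correct} variable for each piece. Fix $r_{0}\in(R,\tfrac{3}{2}R)$ as in \eqref{w1Dw1}; it then suffices to bound $\int_{\Omega_{r_{0}}}({\bf f}_{1}^{3})^{(j)}({\bf w}_{1}^{3})^{(j)}\,\mathrm{d}x$ for $j=1,2$ by $C\|\nabla{\bf w}_{1}^{3}\|_{L^{2}(\Omega)}$.

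For each component $j$, I will split
\begin{equation*}
({\bf f}_{1}^{3})^{(j)}\;=\;\mu\,\partial_{x_{1}x_{1}}({\bf v}_{1}^{3})^{(j)}\;+\;\bigl(\mu\,\partial_{x_{2}x_{2}}({\bf v}_{1}^{3})^{(j)}-\partial_{x_{j}}\bar{p}_{1}^{3}\bigr)=:\mathrm{T}_{j}^{(1)}+\mathrm{T}_{j}^{(2)}.
\end{equation*}
For the piece $\mathrm{T}_{j}^{(1)}$ I will integrate by parts with respect to $x_{1}$, producing an interior term $\int\partial_{x_{1}}({\bf w}_{1}^{3})^{(j)}\,\partial_{x_{1}}({\bf v}_{1}^{3})^{(j)}$ plus a trace term on $|x_{1}|=r_{0}$. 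A direct inspection of formulas \eqref{v1311} and \eqref{v1312} shows that every summand of $\partial_{x_{1}}({\bf v}_{1}^{3})^{(j)}$ is either pointwise bounded or of the form $\tfrac{x_{1}}{\delta(x_{1})}$, $\tfrac{x_{1}^{3}}{\delta^{2}(x_{1})}$, so an easy slab-wise computation gives $\|\partial_{x_{1}}({\bf v}_{1}^{3})^{(j)}\|_{L^{2}(\Omega_{r_{0}})}\leq C$; Cauchy--Schwarz then controls $\mathrm{T}_{j}^{(1)}$ by $C\|\nabla{\bf w}_{1}^{3}\|_{L^{2}(\Omega)}$, and the trace term is absorbed into $C\|\nabla{\bf w}_{1}^{3}\|_{L^{2}(\Omega)}$ by \eqref{w1Dw1}.

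The core difficulty is the residual $\mathrm{T}_{j}^{(2)}$, because its pointwise bound $C/\delta(x_{1})$ from \eqref{estv131-p131}--\eqref{estv132-p132} is not in $L^{2}(\Omega_{r_{0}})$ uniformly in $\varepsilon$. This is exactly the same obstruction that appeared in the $\alpha=2$ case, and I will resolve it in the same spirit. Inspecting \eqref{estv131-p131}, $\mathrm{T}_{1}^{(2)}$ is a polynomial in $x_{2}$ of degree $2$ with coefficients depending only on $x_{1}$; hence I write
\begin{equation*}
\mathrm{T}_{1}^{(2)}=\partial_{x_{2}}\!\left(\mu\Bigl(\tfrac{9x_{2}}{2\delta}-\tfrac{24x_{2}^{3}}{\delta^{3}}+\tfrac{48x_{1}^{2}x_{2}^{3}}{\delta^{4}}-\tfrac{64x_{1}^{4}x_{2}^{3}}{\delta^{5}}\Bigr)\right),
\end{equation*}
and similarly, from \eqref{estv132-p132},
\begin{equation*}
\mathrm{T}_{2}^{(2)}=\partial_{x_{2}}\!\left(\mu\Bigl(\tfrac{9x_{1}x_{2}^{2}}{2\delta^{2}}-\tfrac{30x_{1}x_{2}^{4}}{\delta^{4}}\Bigr)\right).
\end{equation*}
Inside $\Omega_{r_{0}}$ we have $|x_{2}|\lesssim\delta(x_{1})$ and $|x_{1}|^{2}\lesssim\delta(x_{1})$, so every summand of each antiderivative is pointwise bounded; thus the antiderivatives are $L^{\infty}$ (and in particular $L^{2}$) uniformly in $\varepsilon$. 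Since ${\bf w}_{1}^{3}=0$ on $\partial D_{1}\cup\partial D_{2}$, integration by parts with respect to $x_{2}$ has no boundary contribution on the top/bottom of $\Omega_{r_{0}}$; lateral boundaries at $|x_{1}|=r_{0}$ contribute terms handled by \eqref{w1Dw1}, and the remaining volume integrals are dominated by $C\|\partial_{x_{2}}({\bf w}_{1}^{3})^{(j)}\|_{L^{2}(\Omega)}\leq C\|\nabla{\bf w}_{1}^{3}\|_{L^{2}(\Omega)}$ via Cauchy--Schwarz.

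Combining the two pieces for $j=1,2$ verifies condition \eqref{int-fw}; conditions \eqref{estv113D3} are immediate from the explicit formulas for ${\bf v}_{1}^{3}$ and $\bar{p}_{1}^{3}$ away from $\Omega_{R}$ (which by construction satisfy $\|{\bf v}_{1}^{3}\|_{C^{2}(\Omega\setminus\Omega_{R})}+\|\bar{p}_{1}^{3}\|_{C^{1}(\Omega\setminus\Omega_{R})}\leq C$). Lemma \ref{lemmaenergy} then yields the conclusion. The main obstacle, as above, is purely algebraic: isolating the ``bad'' size-$1/\delta$ part of ${\bf f}_{1}^{3}$ and recognizing it as an $x_{2}$-derivative of a bounded function; once that is done, the argument is a routine adaptation of Lemma \ref{lem_energyw12}.
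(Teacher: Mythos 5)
Your proposal is correct and follows essentially the same route as the paper: split ${\bf f}_1^3$ into the $\partial_{x_1x_1}{\bf v}_1^3$ part (integration by parts in $x_1$, using the uniform $L^2$ bound on $\partial_{x_1}{\bf v}_1^3$ from \eqref{v1311}--\eqref{v1312} and the trace bound \eqref{w1Dw1}) and the part $\mu\partial_{x_2x_2}{\bf v}_1^3-\nabla\bar p_1^3$, which is recognized as $\partial_{x_2}$ of a function bounded uniformly in $\varepsilon$ so that integration by parts in $x_2$ (with ${\bf w}_1^3=0$ on $\partial D_1\cup\partial D_2$) verifies \eqref{int-fw} and Lemma \ref{lemmaenergy} applies. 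The only difference is cosmetic: you write the explicit $x_2$-antiderivatives where the paper records them abstractly as polynomials $A^{3j}_k(x_1)x_2^k$ with bounded antiderivatives.
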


\begin{proof}
Similarly as in the proof of Lemma \ref{lem_energyw12}, we rewrite \eqref{estv131-p131} and \eqref{estv132-p132} into the form of polynomials as follows:
\begin{align*}
\mu\partial_{x_2x_2} ({\bf v}_{1}^3)^{(1)}-\partial_{x_1}\bar{p}_1^3&:=A^{31}_{0}(x_1)+A^{31}_{2}(x_1)x_2^{2}=\partial_{x_2}\Big(A^{31}_{0}(x_1)x_2+\frac{1}{3}A^{31}_{2}(x_1)x_2^{3}\Big),\\
\mu\partial_{x_2x_2} ({\bf v}_{1}^3)^{(2)}-\partial_{x_2}\bar{p}_1^3&:=A^{32}_{1}(x_1)x_2+A^{32}_{3}(x_1)x_2^{3}=\partial_{x_2}\Big(\frac{1}{2}A^{32}_{1}(x_1)x_2^{2}+\frac{1}{4}A^{32}_{3}(x_1)x_2^{4}\Big),
\end{align*}
where $A^{3j}_{k}(x_1)$ are rational functions of $x_1$. Moreover, 
\begin{align*}
\Big|A^{31}_{0}(x_1)x_2+\frac{1}{3}A^{31}_{2}(x_1)x_2^{3}\Big|,\,\Big|\frac{1}{2}A^{32}_{1}x_2^{2}+\frac{1}{4}A^{32}_{3}x_2^{4}\Big|\leq\,C.
\end{align*}
Applying the integration by parts with respect to $x_2$, and using ${\bf w}_1^3=0$ on $\partial D_{1}\cup \partial D_{2}$, H\"older's inequality, and \eqref{w1Dw1}, gives
\begin{align*}
&\left|\int_{\Omega_{r_{0}}} (\mu\partial_{x_2x_2} ({\bf v}_{1}^3)^{(j)}-\partial_{x_j}\bar{p}_1^3)({\bf w}_{1}^{3})^{(j)}\mathrm{d}x\right|\\
\leq&\,C\int_{\Omega_{r_{0}}}|\partial_{x_2}({\bf w}_{1}^{3})^{(j)}|\mathrm{d}x+C\left(\int_{\Omega}|\nabla {\bf w}_{1}^{3}|^2\mathrm{d}x\right)^{1/2}
\leq\,C\left(\int_{\Omega}|\nabla {\bf w}_{1}^{3}|^2\mathrm{d}x\right)^{1/2},\quad\,j=1,2.
\end{align*}
For other terms $\partial_{x_1x_1}({\bf v}_{1}^3)^{(j)}$, since \eqref{v1311}--\eqref{v1322}, we still use the integration by parts with respect to $x_1$ as  \eqref{estf1333} in Lemma \ref{lem_energyw12}.
\begin{align*}
\left|\mu\int_{\Omega_{r_{0}}} ({\bf w}_{1}^{3})^{(j)}\partial_{x_1x_1}({\bf v}^{3}_{1})^{(j)}\mathrm{d}x\right|\leq C\left(\int_{\Omega}|\nabla {\bf w}_{1}^{3}|^2\mathrm{d}x\right)^{1/2}.
\end{align*}
Thus,
$$\Big| \int_{\Omega_{r_{0}}}\sum_{j=1}^{2}({\bf f}_{1}^{3})^{(j)}({\bf w}_{1}^{3})^{(j)}\mathrm{d}x\Big|\leq\,C\left(\int_{\Omega}|\nabla {\bf w}_{1}^{3}|^2\mathrm{d}x\right)^{1/2}.
$$
Thanks to Lemma \ref{lemmaenergy}, we finish the proof of the lemma.
\end{proof}

\begin{proof}[Proof of Proposition \ref{propu13}]
Notice that the above bounds for ${\bf v}_{1}^{3}$ and $f_1^{3}$ are the same as those of the case $\alpha=1$, following exactly the same proof as for Proposition \ref{propu11}, we have Proposition \ref{propu13} holds.
\end{proof}

\subsection{Estimates of $C_i^\alpha$}

Next, we will make use of the estimates of ${\bf v}_{1}^{\alpha}$ and $\bar{p}_1^{\alpha}$ to solve those free constants $C_i^\alpha$.
Let us define, similarly as in \cite{BLL},
\begin{align}\label{aijbj}
a_{ij}^{\alpha\beta}:=-
\int_{\partial D_j}{\boldsymbol\psi}_\beta\cdot\sigma[{\bf u}_{i}^\alpha,p_{i}^{\alpha}]\nu,\quad
b_{j}^{\beta}:=
\int_{\partial D_j}{\boldsymbol\psi}_\beta\cdot\sigma[{\bf u}_{0},p_{0}]\nu.
\end{align}
Employing the integration by parts, \eqref{aijbj} are equivalent to the volume integrals:
\begin{align}\label{defaij}
a_{ij}^{\alpha\beta}=\int_{\Omega} \left(2\mu e({\bf u}_{i}^{\alpha}), e({\bf u}_{j}^{\beta})\right)\mathrm{d}x,\quad
b_{j}^{\beta}=-\int_{\Omega} \left(2\mu e({\bf u}_{0}),e({\bf u}_{j}^\beta)\right)\mathrm{d}x.
\end{align}
Then, \eqref{equ-decompositon} can be rewritten as
\begin{align}\label{systemC2D}
\begin{cases}
\sum\limits_{\alpha=1}^{3}C_{1}^{\alpha}a_{11}^{\alpha\beta}
+\sum\limits_{\alpha=1}^{3}C_{2}^{\alpha}a_{21}^{\alpha\beta}
-b_{1}^{\beta}=0,&\\\\
\sum\limits_{\alpha=1}^{3}C_{1}^{\alpha}a_{12}^{\alpha\beta}
+\sum\limits_{\alpha=1}^{3}C_{2}^{\alpha}a_{22}^{\alpha\beta}
-b_{2}^{\beta}=0.
\end{cases}\quad \beta=1,2,3.
\end{align}
To prove Proposition \ref{lemCialpha}, we need to get the estimates of  $a_{11}^{\alpha\beta}$ and $b_1^\beta$, $\alpha,\beta=1,2,3$.

First, for $a_{11}^{\alpha\beta}$,
\begin{lemma}\label{lema11}
We have
\begin{align}
\frac{1}{C\sqrt{\varepsilon}}&\leq a_{11}^{11}\leq \frac{C}{\sqrt{\varepsilon}},\quad~~
\frac{1}{C\varepsilon^{3/2}}\leq a_{11}^{22}\leq \frac{C}{\varepsilon^{3/2}},\quad
~~\frac{1}{C\sqrt{\varepsilon}}\leq a_{11}^{33}\leq \frac{C}{\sqrt{\varepsilon}};\label{esta1111}\\
|a_{11}^{12}|,~|a_{11}^{23}|&\leq C|\log\varepsilon|,~|a_{11}^{13}|\leq C,~|a_{11}^{\alpha\beta}+a_{21}^{\alpha\beta}|\leq C,~\alpha,\beta=1,2,3,~\alpha\neq\beta;\label{esta1112}
\end{align}
and
\begin{equation}\label{estb1}
|b_1^\beta|\leq C,~\beta=1,2,3.
\end{equation}
\end{lemma}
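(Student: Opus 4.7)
\emph{Plan.} I will use both the boundary form \eqref{aijbj} and the volume form \eqref{defaij} of $a_{11}^{\alpha\beta}$ and $b_1^\beta$, and rely throughout on the decomposition ${\bf u}_1^\alpha={\bf v}_1^\alpha+{\bf w}_1^\alpha$ in which ${\bf v}_1^\alpha$ is the explicit auxiliary function from Subsection \ref{auxiliary} and $\nabla{\bf w}_1^\alpha$ is controlled pointwise by Propositions \ref{propu11}--\ref{propu13}. The bounds $|b_1^\beta|\le C$ and $|a_{11}^{\alpha\beta}+a_{21}^{\alpha\beta}|\le C$ are immediate consequences of \eqref{aijbj}: the remark following Proposition \ref{propu0} supplies uniform $L^\infty(\Omega)$-bounds on $\sigma[{\bf u}_0,p_0]$ and on $\sigma[{\bf u}_1^\alpha+{\bf u}_2^\alpha,p_1^\alpha+p_2^\alpha]$, while $\|{\boldsymbol\psi}_\beta\|_{L^\infty(\partial D_1)}+|\partial D_1|\le C$.

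\emph{Diagonal entries.} For the upper bounds, write $a_{11}^{\alpha\alpha}=\int_\Omega 2\mu|e({\bf u}_1^\alpha)|^2\,\mathrm dx$, split into $\Omega_R\cup(\Omega\setminus\Omega_R)$ (the latter contributing $O(1)$), and insert the pointwise bounds of Propositions \ref{propu11}--\ref{propu13} on $\Omega_R$, using $|\Omega_\delta(x_1)|\asymp\delta(x_1)\asymp\varepsilon+x_1^2$:
$$\int_{\Omega_R}|e({\bf u}_1^1)|^2+|e({\bf u}_1^3)|^2\lesssim\int_{-R}^R\frac{\mathrm dx_1}{\delta(x_1)}\asymp\varepsilon^{-1/2},\qquad \int_{\Omega_R}|e({\bf u}_1^2)|^2\lesssim\int_{-R}^R\frac{x_1^2\,\mathrm dx_1}{\delta^3(x_1)}\asymp\varepsilon^{-3/2}.$$
For the matching lower bounds, localize on an area-$\varepsilon^{3/2}$ sub-region $\Sigma_\alpha$ where a designated entry of $e({\bf v}_1^\alpha)$ of magnitude $\asymp\varepsilon^{-1}$ (for $\alpha=1,3$) or $\asymp\varepsilon^{-3/2}$ (for $\alpha=2$) dominates $e({\bf w}_1^\alpha)$: for $\alpha=1,3$ take $\Sigma_\alpha:=\{|x_1|\le c\sqrt\varepsilon,\ |x_2|\asymp\varepsilon\}$ and use $|e({\bf v}_1^\alpha)_{12}|\asymp\delta^{-1}$ versus $|e({\bf w}_1^\alpha)|\le C$; for $\alpha=2$ take $\Sigma_2:=\{|x_1|\asymp\sqrt\varepsilon,\ |x_2|\asymp\varepsilon\}$ and use $|e({\bf v}_1^2)_{12}|\supset|6x_1k/\delta^2|\asymp\varepsilon^{-3/2}$ versus $|e({\bf w}_1^2)|\le C\delta^{-1/2}\asymp\varepsilon^{-1/2}$. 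Squaring and integrating over $\Sigma_\alpha$ yields the stated lower bounds.

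\emph{Off-diagonal entries.} The naive Cauchy--Schwarz $|a_{11}^{\alpha\beta}|\le\sqrt{a_{11}^{\alpha\alpha}a_{11}^{\beta\beta}}$ is too large, so cancellations must be extracted. From \eqref{v11}--\eqref{v13} read off the $x_1$-parities: ${\bf v}_1^1,{\bf v}_1^3$ are (even,\,odd) componentwise and ${\bf v}_1^2$ is (odd,\,even); a short bookkeeping shows that $e({\bf v}_1^1){:}e({\bf v}_1^2)$ and $e({\bf v}_1^2){:}e({\bf v}_1^3)$ are odd in $x_1$, so on the $x_1$-symmetric $\Omega_R$ the leading $\delta^{-2}$-singular contribution vanishes and the subleading $\delta^{-1}$ piece integrates to $O(|\log\varepsilon|)$. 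The cross terms $\int 2\mu e({\bf v}_1^\alpha){:}e({\bf w}_1^\beta)$ are controlled by integrating by parts against \eqref{w1} and using that the right-hand sides ${\bf f}_1^\alpha$ satisfy \eqref{estdivv11p1}, \eqref{estDivv12} and the analogue for $\alpha=3$; these contribute remainders of the same order. For $a_{11}^{13}$ the integrand is even in $x_1$, and I instead invoke the $x_2$-symmetry of $\Omega_R$ (guaranteed by $h_1=h_2=x_1^2/2$): the leading piece $e({\bf v}_1^1)_{12}\,e({\bf v}_1^3)_{12}\approx k(x)/\delta^2(x_1)=x_2/\delta^3(x_1)$ is odd in $x_2$ and integrates to zero, leaving an $O(1)$ residual.

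\emph{Main obstacle.} The sharpest bound $|a_{11}^{13}|\le C$ is the main difficulty: $x_1$-parity is ineffective and one must identify the $x_2$-odd leading singularity inside $e({\bf v}_1^1){:}e({\bf v}_1^3)$ while verifying that every subleading and ${\bf w}$-cross contribution remains $O(1)$. Every algebraic simplification must be tracked, including those involving the pressure parts $\bar p_1^1$ from \eqref{defp11} and its analogue for $\bar p_1^3$, since a single overlooked $\delta^{-1}$-singular term would degrade the estimate to $O(|\log\varepsilon|)$ or worse.
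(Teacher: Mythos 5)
Your proposal is correct and follows essentially the same route as the paper: the volume form \eqref{defaij} plus the decomposition ${\bf u}_1^\alpha={\bf v}_1^\alpha+{\bf w}_1^\alpha$ with the pointwise bounds of Propositions \ref{propu11}--\ref{propu13} for the diagonal entries, $x_1$-oddness of $e({\bf v}_1^1){:}e({\bf v}_1^2)$ and $e({\bf v}_1^2){:}e({\bf v}_1^3)$ for the $|\log\varepsilon|$ bounds, the $x_2$-cancellation of the leading product $e({\bf v}_1^1)_{12}e({\bf v}_1^3)_{12}$ for $|a_{11}^{13}|\le C$, and the boundary form with the stress bounds following Proposition \ref{propu0} for $b_1^\beta$ and $a_{11}^{\alpha\beta}+a_{21}^{\alpha\beta}$. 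The only deviations are cosmetic: the paper bounds the ${\bf v}$--${\bf w}$ cross terms directly by Cauchy--Schwarz with the pointwise estimates (no integration by parts against \eqref{w1} is needed), and obtains the diagonal lower bounds by integrating the explicit leading term over all of $\Omega_R$ rather than localizing to a sub-box.
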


\begin{proof}

{\bf (i) Proof of \eqref{esta1111}.} For $a_{11}^{11}$, it follows from  \eqref{defaij} and Proposition \ref{propu11} that
\begin{align*}
a_{11}^{11}\leq C\int_{\Omega}|\nabla {\bf u}_1^1|^2\mathrm{d}x\leq C\int_{\Omega_R}\frac{1}{\delta^2(x_1)}\mathrm{d}x+C\leq\frac{C}{\sqrt{\varepsilon}}.
\end{align*}
On the other hand, on account of \eqref{estv112} and Proposition \ref{propu11}, we derive
\begin{align*}
a_{11}^{11}\geq \frac{1}{C}\int_{\Omega_R}|e({\bf u}_1^1)|^2\mathrm{d}x&\geq\frac{1}{C}\int_{\Omega_R}|\partial_{x_2}({\bf v}_1^1)^{(1)}|^2\mathrm{d}x-C\geq \frac{1}{C}\int_{\Omega_R}\frac{1}{\delta^2(x_1)}\mathrm{d}x-C\geq\frac{1}{C\sqrt{\varepsilon}}.
\end{align*}

For $a_{11}^{22}$, from Proposition \ref{propu12}, we have
\begin{align*}
a_{11}^{22}\leq C\int_{\Omega}|\nabla {\bf u}_1^2|^2\mathrm{d}x\leq C\int_{\Omega_R}\left(\frac{1}{\delta(x_1)}+\frac{|x_1|}{\delta^2(x_{1})}\right)^2\mathrm{d}x+C\leq \frac{C}{\varepsilon^{3/2}}.
\end{align*}
For the lower bound, first employing the triangle inequality,
\begin{align*}
a_{11}^{22}&\geq\frac{1}{C}\int_{\Omega_R}|e({\bf u}_1^2)|^2\mathrm{d}x\geq\frac{1}{C}\int_{\Omega_R}|\partial_{x_2}({\bf v}_1^2)^{(1)}+\partial_{x_{2}}({\bf w}_1^2)^{(1)}|^2\mathrm{d}x\\
&\geq\frac{1}{C}\int_{\Omega_R}|\partial_{x_2}({\bf v}_1^2)^{(1)}|^2\mathrm{d}x-\frac{2}{C}\int_{\Omega_R}|\partial_{x_2}({\bf v}_1^2)^{(1)}||\partial_{x_2}({\bf w}_1^2)^{(1)}|\mathrm{d}x.
\end{align*}
Using \eqref{estv121}, 
\begin{align*}
\int_{\Omega_R}|\partial_{x_2}({\bf v}_1^2)^{(1)}|^2\mathrm{d}x&\geq\frac{1}{C}\int_{\Omega_R}\frac{x_{1}^{2}x_2^2}{\delta^6(x_{1})}\mathrm{d}x\geq \frac{1}{C}\int_{|x_1|<R}\frac{x_{1}^{2}}{\delta^3(x_{1})}\mathrm{d}x_1\\
&\geq\frac{1}{C}\int_{0}^R\frac{r^2}{(\varepsilon+r^2)^3}\ dr\geq\frac{1}{C\varepsilon^{3/2}}\int_0^{\frac{R}{\sqrt{\varepsilon}}}\frac{r^2}{(1+r^2)^3}\ dr\geq\frac{1}{C\varepsilon^{3/2}},
\end{align*}
while, by virtue of Proposition \ref{propu12},
\begin{equation*}
\int_{\Omega_R}|\partial_{x_2}({\bf v}_1^2)^{(1)}||\partial_{x_2}({\bf w}_1^2)^{(1)}|\mathrm{d}x\leq\int_{\Omega_R}\frac{|x_1|}{\delta^{5/2}(x_{1})}\mathrm{d}x\leq\frac{C}{\sqrt\varepsilon}.
\end{equation*}
Hence, $a_{11}^{22}\geq\frac{1}{C\varepsilon^{3/2}}$.

For $a_{11}^{33}$, using Proposition \ref{propu13}, 
\begin{equation*}
a_{11}^{33}\leq C\int_{\Omega}|\nabla {\bf u}_1^3|^2\mathrm{d}x\leq C\int_{\Omega_R}\frac{1}{\delta^2(x_1)}\mathrm{d}x+C\leq\frac{C}{\sqrt{\varepsilon}}.
\end{equation*}
For the lower bound,  in view of \eqref{v1321} and Proposition \ref{propu13}, 
\begin{align*}
a_{11}^{33}&\geq \frac{1}{C}\int_{\Omega_R}|\partial_{x_2}({\bf v}_1^3)^{(1)}|^2\mathrm{d}x-C\geq\frac{1}{C}\int_{\Omega_R}\frac{4x_2^2}{\delta^4(x_1)}\Big(1-\frac{4x_{1}^2}{\delta(x_{1})}\Big)^2\mathrm{d}x-C\\
&\geq\frac{1}{C}\int_{|x_1|<\frac{\sqrt{\varepsilon}}{6}}\frac{1}{\delta(x_1)}\mathrm{d}x_1-C\geq\frac{1}{C\sqrt{\varepsilon}}.
\end{align*}
Thus, the estimate \eqref{esta1111} is proved.

{\bf (ii) Proof of \eqref{esta1112}--\eqref{estb1}.} 
By using \eqref{defaij}, we have 
\begin{align*}
a_{11}^{12}&=\int_{\Omega_R} \left(2\mu e({\bf u}_{1}^{1}), e({\bf u}_{1}^2)\right)\mathrm{d}x+C\\
&=\int_{\Omega_R} \left(2\mu e({\bf v}_{1}^1), e({\bf v}_{1}^2)\right)\mathrm{d}x+\int_{\Omega_R} \left(2\mu e({\bf v}_{1}^1), e({\bf w}_{1}^2)\right)\mathrm{d}x\\
&\quad+\int_{\Omega_R} \left(2\mu e({\bf w}_{1}^1), e({\bf v}_{1}^2)\right)\mathrm{d}x+\int_{\Omega_R} \left(2\mu e({\bf w}_{1}^1), e({\bf w}_{1}^2)\right)\mathrm{d}x+C.
\end{align*}
It follows from Propositions \ref{propu11} and \ref{propu12} that
\begin{align*}
\left|\int_{\Omega_R} \left(2\mu e({\bf v}_{1}^1), e({\bf w}_{1}^2)\right)\mathrm{d}x\right|\leq\int_{|x_1|\leq R}\frac{C}{\sqrt{\delta(x_1)}}\ dx_1\leq C|\log\varepsilon|,
\end{align*}
\begin{align*}
\left|\int_{\Omega_R} \left(2\mu e({\bf w}_{1}^1), e({\bf v}_{1}^2)\right)\mathrm{d}x\right|\leq\int_{|x_1|\leq R}\frac{C|x_1|}{\delta(x_1)}\ dx_1\leq C|\log\varepsilon|,
\end{align*}
and 
\begin{align*}
\left|\int_{\Omega_R} \left(2\mu e({\bf w}_{1}^1), e({\bf w}_{1}^2)\right)\mathrm{d}x\right|\leq \int_{|x_1|\leq R}C\sqrt{\delta(x_1)}\ dx_1\leq C.
\end{align*}
In view of  \eqref{estv112}, \eqref{estv1122} and \eqref{estv1211}--\eqref{estv121}, one can see that the the biggest term in $(2\mu e({\bf v}_{1}^1), e({\bf v}_{1}^2))$ is
\begin{align*}
\partial_{x_2}({\bf v}_1^1)^{(1)}\cdot\partial_{x_2}({\bf v}_1^2)^{(1)}=\frac{12x_1x_2}{\delta^3(x_1)},
\end{align*} 
which is an odd function with respect to $x_1$ and thus the integral is $0$. The integral of the rest terms is bounded by $C|\log\varepsilon|$. Hence, we derive 
$$|a_{11}^{12}|\leq C|\log\varepsilon|.$$
Similarly, the biggest term in $(2\mu e({\bf v}_{1}^2), e({\bf v}_{1}^3))$ is
\begin{align*}
&\partial_{x_2}({\bf v}_1^2)^{(1)}\cdot\partial_{x_2}({\bf v}_1^3)^{(1)}\\
&=\frac{12x_{1}}{\delta^{2}(x_{1})}k(x)\left(\frac{9}{2}k(x)+2\left(\frac{1}{\delta(x_1)}-\frac{4x_1^2}{\delta^2(x_1)}\right)k(x)-20k^{3}(x)+\frac{1}{2}\right),
\end{align*}
which is an odd function with respect to $x_1$. The integral of the rest terms is bounded by $C|\log\varepsilon|$ and thus
$$|a_{11}^{23}|\leq C|\log\varepsilon|.$$

For $a_{11}^{13}$, we obtain from Propositions \ref{propu11} and \ref{propu13} that
\begin{align*}
&\left|\int_{\Omega_R} \left(2\mu e({\bf v}_{1}^1), e({\bf w}_{1}^3)\right)\mathrm{d}x\right|+\left|\int_{\Omega_R} \left(2\mu e({\bf w}_{1}^1), e({\bf v}_{1}^3)\right)\mathrm{d}x\right|\\
&\quad+\left|\int_{\Omega_R} \left(2\mu e({\bf w}_{1}^1), e({\bf w}_{1}^3)\right)\mathrm{d}x\right|\leq C.
\end{align*}
From \eqref{estv112}, \eqref{estv1122} and \eqref{v1311}--\eqref{v1322}, it follows that the biggest term in $(2\mu e({\bf v}_{1}^1), e({\bf v}_{1}^3))$ is
\begin{align*}
\partial_{x_2}({\bf v}_1^1)^{(1)}\cdot\partial_{x_2}({\bf v}_1^3)^{(1)}=\frac{1}{\delta(x_1)}\left(\frac{9}{2}k(x)+2\left(\frac{1}{\delta(x_1)}-\frac{4x_1^2}{\delta^2(x_1)}\right)k(x)-20k^{3}(x)+\frac{1}{2}\right).
\end{align*} 
A direct calcalution gives 
\begin{align*}
\left|\int_{\Omega_R}\partial_{x_2}({\bf v}_1^1)^{(1)}\cdot\partial_{x_2}({\bf v}_1^3)^{(1)}\right|\leq C.
\end{align*}
The integral of the rest terms is bounded by $C$. Therefore, we obtain
$$|a_{11}^{13}|\leq C.$$
The rest terms in \eqref{esta1112} and \eqref{estb1} are easy. We omit the details here. 
\end{proof}

\begin{proof}[Proof of Proposition \ref{lemCialpha}.]
First, by using the trace theorem, we can prove the boundedness of $C_i^\alpha$, similarly as in \cite{BLL}.
To solve $|C_1^\alpha-C_2^\alpha|$, we use the first three equations:
\begin{align*}
a_{11}(C_{1}-C_{2})=f:=b_1-(a_{11}+a_{21})C_{2},
\end{align*}
where $a_{ij}:=(a_{ij}^{\alpha\beta})_{\alpha,\beta=1}^3$, $C_{i}:=(C_{i}^{1}, C_i^2, C_i^3)^{\mathrm{T}}$, and  $b_1:=(b_1^1, b_1^2, b_1^3)^{\mathrm{T}}$. It is known from \cite{BLL} that the matrix $a_{11}=(a_{11}^{\alpha\beta})_{\alpha,\beta=1}^3$  is positive definite. By virtue of Cramer's rule and Lemma \ref{lema11}, we obtain
\begin{align*}
|C_{1}^1-C_{2}^{1}|&=\left|\frac{1}{\det a_{11}}\left(
f^{1}\begin{vmatrix}
a_{11}^{22} & a_{11}^{23} \\\\
a_{11}^{32} & a_{11}^{33}
\end{vmatrix}-
f^{2}\begin{vmatrix}
a_{11}^{12} & a_{11}^{13} \\\\
a_{11}^{32} & a_{11}^{33}
\end{vmatrix}+
f^{3}\begin{vmatrix}
a_{11}^{12} & a_{11}^{13} \\\\
a_{11}^{22} & a_{11}^{23}
\end{vmatrix}
\right)\right|\\
&\leq C\varepsilon^{5/2}\left(|f^1|\Big|a_{11}^{22}a_{11}^{33}\Big|+|f^2|\Big|a_{11}^{12}a_{11}^{33}\Big|+|f^3|\Big|a_{11}^{22}a_{11}^{13}\Big|\right)\leq C\sqrt{\varepsilon}.
\end{align*}
Similarly,  we have 
\begin{align*}
|C_{1}^2-C_{2}^{2}|\leq C\varepsilon^{3/2},\quad
|C_{1}^3-C_{2}^{3}|\leq  C\sqrt{\varepsilon}.
\end{align*}
\end{proof}

\section{The Proof of Theorem \ref{mainthm2}: Lower Bounds}\label{sec5}

In this section, we prove the lower bounds of $\nabla{\bf u}$ on the segment $\overline{P_{1}P_{2}}$ in dimension two, following the method in \cite{Li2021}. Recalling that $({\bf u}^{*}, p^{*})$ verifies \eqref{maineqn touch}, we decompose
$${\bf u}^{*}=\sum_{\alpha=1}^{3}C_{*}^{\alpha}{\bf u}^{*\alpha}+{\bf u}_{0}^{*},\quad\mbox{and}~ p^{*}=\sum_{\alpha=1}^{3}C_{*}^{\alpha}p^{*\alpha}+p_{0}^{*},$$
where $({\bf u}^{*\alpha},p^{*\alpha})$ satisfies
\begin{equation}\label{def valpha*}
\begin{cases}
\nabla\cdot\sigma[{\bf u}^{*\alpha},p^{*\alpha}]=0,\quad\nabla\cdot {\bf u}^{*\alpha}=0,&\mathrm{in}~\Omega^{0},\\
{\bf u}^{*\alpha}={\boldsymbol\psi}_{\alpha},&\mathrm{on}~\partial{D}_{1}^{0}\cup\partial{D}_{2}^{0},\\
{\bf u}^{*\alpha}=0,&\mathrm{on}~\partial{D},
\end{cases}
\end{equation}
and $({\bf u}_{0}^{*},p_{0}^{*})$ satisfies
\begin{equation}\label{equ_v3*}
\begin{cases}
\nabla\cdot\sigma[{\bf u}_0^{*},p_0^{*}]=0,\quad\nabla\cdot {\bf u}_0^{*}=0,&\mathrm{in}~\Omega^{0},\\
{\bf u}_{0}^{*}=0,&\mathrm{on}~\partial{D}_{1}^{0}\cup\partial{D_{2}^{0}},\\
{\bf u}_{0}^{*}={\boldsymbol\varphi},&\mathrm{on}~\partial{D}.
\end{cases}
\end{equation}

In view of the definition of the blow-up factors \eqref{blowupfactor}, we define, similarly,  
\begin{align}\label{blowupfactor1}
\tilde b_{j}^{\beta}:=
\int_{\partial D_j}{\boldsymbol\psi}_\beta\cdot\sigma[{\bf u}_{b},p_{b}]\nu,
\end{align}
where 
\begin{equation*}
{\bf u}_{b}:=\sum_{\alpha=1}^{3}C_{2}^{\alpha}{\bf u}^{\alpha}+{\bf u}_{0},\quad p_{b}:=\sum_{\alpha=1}^{3}C_{2}^{\alpha}p^{\alpha}+p_{0},
\end{equation*}
and
$${\bf u}^{\alpha}:={\bf u}_{1}^{\alpha}+{\bf u}_{2}^{\alpha},\quad p^{\alpha}:=p_{1}^{\alpha}+p_{2}^{\alpha}$$ satisfy
\begin{equation}\label{def valpha}
\begin{cases}
\nabla\cdot\sigma[{\bf u}^{\alpha},p^{\alpha}]=0,\quad\nabla\cdot {\bf u}^{\alpha}=0,&\mathrm{in}~\Omega,\\
{\bf u}^{\alpha}={\boldsymbol\psi}_{\alpha},&\mathrm{on}~\partial{D}_{1}\cup\partial D_{2},\\
{\bf u}^{\alpha}=0,&\mathrm{on}~\partial{D},
\end{cases}
\end{equation}
and $({\bf u}_{0},p_{0})$ satisfies
\begin{equation*}
\begin{cases}
\nabla\cdot\sigma[{\bf u}_0,p_0]=0,\quad\nabla\cdot {\bf u}_0=0,&\mathrm{in}~\Omega,\\
{\bf u}_{0}=0,&\mathrm{on}~\partial{D}_{1}\cup\partial{D}_{2},\\
{\bf u}_{0}={\boldsymbol\varphi},&\mathrm{on}~\partial{D}.
\end{cases}
\end{equation*}
Then, comparing \eqref{blowupfactor} with \eqref{blowupfactor1}, we have
\begin{prop}\label{protildeb}
For $\beta=1,2,3$, we have 
\begin{equation}\label{convtilbe2D}
|\tilde b_{1}^{\beta}[{\boldsymbol\varphi}]-\tilde b_{1}^{*\beta}[{\boldsymbol\varphi}]|\leq C\sqrt{\varepsilon}.
\end{equation}
\end{prop}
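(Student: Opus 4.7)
The plan is to turn both boundary fluxes into the same bulk quantity and then compare. Fix a smooth cutoff $\eta$ on $D$ with $\eta\equiv1$ on a neighborhood of $\overline{D_1\cup D_1^0}$ and $\eta\equiv0$ near $\partial D\cup\overline{D_2\cup D_2^0}$, and use the Bogovski\u{\i} operator to produce a divergence-free extension $\widetilde{\boldsymbol\psi}_\beta\in H^1(D;\mathbb{R}^2)$ of $\eta{\boldsymbol\psi}_\beta$, with the correction supported away from the origin. Since $\nabla\cdot\sigma[{\bf u}_b,p_b]=0$ in $\Omega$ and $\widetilde{\boldsymbol\psi}_\beta$ vanishes on $\partial D\cup\partial D_2$ while equaling ${\boldsymbol\psi}_\beta$ on $\partial D_1$, integration by parts yields
\begin{equation*}
\tilde b_1^\beta[{\boldsymbol\varphi}]=\int_\Omega 2\mu\, e({\bf u}_b):e(\widetilde{\boldsymbol\psi}_\beta)\,dx,
\end{equation*}
and an analogous identity $\tilde b_1^{*\beta}[{\boldsymbol\varphi}]=\int_{\Omega^0}2\mu\, e({\bf u}^*):e(\widetilde{\boldsymbol\psi}_\beta)\,dx$ holds on the touching geometry.

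The difference $\tilde b_1^\beta-\tilde b_1^{*\beta}$ then decomposes into a contribution over the symmetric difference $\Omega\triangle\Omega^0$ and a contribution over $\Omega\cap\Omega^0$. The first piece lives in a sliver of Lebesgue measure $O(\varepsilon)$ on which $e(\widetilde{\boldsymbol\psi}_\beta)$ is bounded and both $e({\bf u}_b)$ and $e({\bf u}^*)$ are uniformly bounded by Proposition \ref{propu0}, so it contributes only $O(\varepsilon)$. The interior piece is controlled by $C\|{\bf u}_b-{\bf u}^*\|_{H^1(\Omega\cap\Omega^0)}$. Using the decompositions ${\bf u}_b=\sum_\alpha C_2^\alpha{\bf u}^\alpha+{\bf u}_0$ and ${\bf u}^*=\sum_\alpha C_*^\alpha{\bf u}^{*\alpha}+{\bf u}_0^*$ from \eqref{def valpha*}--\eqref{def valpha}, this further reduces to: (a) a coefficient bound $|C_2^\alpha-C_*^\alpha|\leq C\sqrt{\varepsilon}$, obtained by comparing the linear system \eqref{systemC2D} with its $\varepsilon=0$ analogue for the $C_*^\alpha$'s, using the $O(1)$ bounds $|a_{11}^{\alpha\beta}+a_{21}^{\alpha\beta}|$ and $|b_j^\beta|$ from Lemma \ref{lema11} and their $O(\sqrt{\varepsilon})$-rate of convergence to the touching values; and (b) the $H^1$-closeness $\|{\bf u}^\alpha-{\bf u}^{*\alpha}\|_{H^1(K)}+\|{\bf u}_0-{\bf u}_0^*\|_{H^1(K)}\leq C\sqrt{\varepsilon}$ on any fixed $K$ compactly contained in $\Omega\cap\Omega^0$.

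The main obstacle is step (b): ${\bf u}^\alpha$ and ${\bf u}^{*\alpha}$ are Stokes solutions on genuinely different domains whose boundaries touch at the origin, so standard domain-perturbation arguments are not immediately applicable. Following the approach of \cite{Li2021}, my plan is to construct an explicit interpolating Stokes field that matches the Dirichlet data of ${\bf u}^\alpha$ on $\partial D_1\cup\partial D_2$ and of ${\bf u}^{*\alpha}$ on $\partial D_1^0\cup\partial D_2^0$, via a Keller-type ansatz analogous to \eqref{v11} in the neck layer; the mismatch is supported in a region of thickness $\varepsilon$ where the interpolant has gradient of order $\varepsilon^{-1/2}$, producing total energy $O(\sqrt{\varepsilon})$, and the global Stokes energy inequality applied to the difference ${\bf u}^\alpha-{\bf u}^{*\alpha}$ transfers this bound to the desired $H^1$-estimate. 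Two minor subtleties remain: ensuring the Bogovski\u{\i} correction used in the first step is $\varepsilon$-independent (handled by supporting it well away from the neck), and controlling the pressure contribution via Lemma \ref{corowq}; both are routine once (b) is established, and combining everything yields $|\tilde b_1^\beta-\tilde b_1^{*\beta}|\leq C\sqrt{\varepsilon}$.
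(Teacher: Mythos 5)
Your first step already breaks down: the cutoff $\eta$ you ask for cannot exist. Since $D_{1}^{0}$ and $D_{2}^{0}$ touch at the origin, the sets $\overline{D_{1}\cup D_{1}^{0}}$ and $\overline{D_{2}\cup D_{2}^{0}}$ share the point $0$, so no function can be $\equiv1$ on a neighborhood of the first and $\equiv0$ near the second. More fundamentally, any admissible test field must equal ${\boldsymbol\psi}_{\beta}$ on $\partial D_{1}^{0}$ and vanish on $\partial D_{2}^{0}$, hence must jump across the touching point; its gradient behaves like $|x_{1}|^{-2}$ in a neck of height $|x_{1}|^{2}$, so it is not in $H^{1}(\Omega^{0})$ and $e(\widetilde{\boldsymbol\psi}_{\beta})$ is certainly not bounded on the sliver $\Omega\triangle\Omega^{0}$ near the origin. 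Thus both the bulk identity for $\tilde b_{1}^{*\beta}$ and the $O(\varepsilon)$ sliver estimate are unjustified. The paper sidesteps exactly this issue by never pairing against a global test field: using integration by parts with the dual solutions it rewrites $\int_{\partial D_{1}}{\boldsymbol\psi}_{\beta}\cdot\sigma[{\bf u}^{\alpha},p^{\alpha}]\nu$ and $b_{1}^{\beta}$ as integrals over $\partial D$ (Lemmas \ref{es b1 b1* beta=1} and \ref{lem valpha1}), so only convergence of $\sigma[{\bf u}_{1}^{\beta}-{\bf u}_{1}^{*\beta},p_{1}^{\beta}-p_{1}^{*\beta}]$ on $\partial D$, far from the neck, is needed.

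Beyond that, the two inputs you label (a) and (b) are precisely the hard content, and your sketches of them do not go through as stated. For (b), the paper's Lemma \ref{lem difference v11} is proved not by an energy/interpolant argument but by a maximum-modulus argument on $V\setminus\mathcal{C}_{\varepsilon^{\theta}}$, estimating the boundary mismatch by $\varepsilon^{1-2\theta}$ and the contribution across the excised cylinder by $\varepsilon^{2\theta}$ (using the $1/\delta$ gradient bounds and the comparison of $k$ with $k^{*}$), and optimizing at $\theta=1/4$ to get the rate $\sqrt{\varepsilon}$; your claim that the interpolant's gradient is $O(\varepsilon^{-1/2})$ on a thickness-$\varepsilon$ mismatch region is wrong near the neck, where the relevant gradients are of size $\varepsilon^{-1}$, and the passage from a boundary-layer energy bound to an $H^{1}$ bound of order $\sqrt{\varepsilon}$ on $K$ is not supplied. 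For (a), $|C_{2}^{\alpha}-C_{*}^{\alpha}|\leq C\sqrt{\varepsilon}$ cannot be read off by "comparing the linear systems": the individual coefficients $a_{11}^{\alpha\beta}$ blow up (e.g.\ $a_{11}^{22}\sim\varepsilon^{-3/2}$), and the paper obtains the estimate only after forming the summed system \eqref{C1+C2_1} and invoking the symmetry hypotheses $({\rm S_{H}})$, $({\rm S_{{\boldsymbol\varphi}}})$ (Proposition \ref{propC2D}) to annihilate the dangerous cross terms, together with Proposition \ref{lemCialpha} and the very boundary-convergence lemmas above; moreover those convergence rates for $a^{\alpha\beta}$ and $b_{1}^{\beta}$ are themselves consequences of Lemma \ref{lem difference v11}, so assuming them is circular in your outline. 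Finally, the uniform bound on $e({\bf u}^{*})$ you cite from Proposition \ref{propu0} is stated only for the $\varepsilon>0$ problem and would need a separate justification on the touching domain.
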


To prove Proposition \ref{protildeb}, we need the convergence of ${\bf u}^\alpha$, $p^\alpha$, ${\bf u}_0$, $p_0$ and $C_{2}^\alpha$. Set
$$\rho_{2}^{\alpha}(\varepsilon):=\begin{cases}
	\varepsilon^{1/2},\quad\alpha=1,3,\\
	\varepsilon^{1/4},\quad\alpha=2.
	\end{cases}$$

\subsection{Convergence of ${\bf u}_1^\alpha$ and $p_1^\alpha$} 

For $\alpha=1,2,3$, let us define $({\bf u}_{1}^{*\alpha},p_1^{*\alpha})$ satisfying
\begin{equation}\label{equ_ui*alpha}
\begin{cases}
\nabla\cdot\sigma[{\bf u}_{1}^{*\alpha},p_{1}^{*\alpha}]=0,\quad\nabla\cdot {\bf u}_{1}^{*\alpha}=0,&\mathrm{in}~\Omega^0,\\
{\bf u}_{1}^{*\alpha}={\boldsymbol\psi}_{\alpha},&\mathrm{on}~\partial{D}_{1}^0\setminus\{0\},\\
{\bf u}_{1}^{*\alpha}=0,&\mathrm{on}~\partial{{D}_{2}^0}\cup\partial{D}.
\end{cases}
\end{equation}	
For each $\alpha$, we will prove that ${\bf u}_{1}^{\alpha}\rightarrow {\bf u}_{1}^{*\alpha}$ when $\varepsilon\rightarrow 0$, with proper convergence rates.
Define  the auxiliary function $k^{*}(x)$ as the limit of $k(x)$ as $\varepsilon\rightarrow0$. Namely, $k^{*}(x)=\frac{1}{2}$ on $\partial{D}_{1}^{0}$, $k^{*}(x)=-\frac{1}{2}$ on $\partial{D}_{2}^{0}\cup\partial{D}$ and
\begin{equation*}
k^{*}(x)=\frac{x_{2}}{|x_1|^2},\quad\hbox{in}\ \Omega_{2R}^{0},\quad \|k^{*}(x)\|_{C^{2}(\Omega^{0}\setminus\Omega_{R}^{0})}\leq\,C,
\end{equation*}
where $\Omega_r^{0}:=\left\{(x_1,x_{2})\in \mathbb{R}^{2}~\big|~-\frac{|x_1|^2}{2}<x_{2}<\frac{|x_1|^2}{2},~|x_1|<r\right\}$, $r<R$. Then we have the following lemma.

\begin{lemma}\label{lem difference v11}
Let $({\bf u}_{1}^{\alpha},p_1^{\alpha})$ and $({\bf u}_{1}^{*\alpha},p_1^{*\alpha})$ satisfy  \eqref{equ_v12D} and \eqref{equ_ui*alpha}, respectively. Then the following boundary estimates hold
\begin{align}\label{boundaryup}
|\nabla({\bf u}_{1}^{\alpha}-{\bf u}_{1}^{*\alpha})(x)|_{x\in\partial D}+|(p_{1}^{\alpha}-p_{1}^{*\alpha})(x)|_{x\in\partial D}&\leq
\,C\rho_{2}^{\alpha}(\varepsilon). 
\end{align}
\end{lemma}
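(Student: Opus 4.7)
The plan is to construct a global extension $(\tilde{\bf u}^{*\alpha}, \tilde p^{*\alpha})$ of $({\bf u}_1^{*\alpha}, p_1^{*\alpha})$ from $\Omega^0$ to $\Omega$, form its difference with $({\bf u}_1^\alpha, p_1^\alpha)$, estimate the difference in energy norm, and then upgrade to a pointwise $C^1 \times C^0$ bound at $\partial D$ via interior Stokes regularity. Outside the neck, in $\Omega\setminus\Omega_R$, the domains $\Omega$ and $\Omega^0$ differ only in two strips of width $O(\varepsilon)$ adjacent to $\partial D_1$ and $\partial D_2$; there I would extend ${\bf u}_1^{*\alpha}$ smoothly across the strips so that the values match $\boldsymbol\psi_\alpha$ on $\partial D_1$ and $0$ on $\partial D_2$, together with a Bogovskii-type correction supported in the strips that restores the divergence-free condition. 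In the neck $\Omega_R$ I would replace ${\bf u}_1^{*\alpha}$ by the Keller-type pair $({\bf v}_1^\alpha, \bar p_1^\alpha)$ built in Subsection \ref{auxiliary}, which already carries the correct Dirichlet data on $\partial D_1 \cup \partial D_2$ and whose Stokes residual is tightly controlled. A partition of unity glues the two pieces into $(\tilde{\bf u}^{*\alpha}, \tilde p^{*\alpha})$ on $\Omega$, agreeing with $({\bf u}_1^\alpha, p_1^\alpha)$ on $\partial D_1 \cup \partial D_2$ and with $({\bf u}_1^{*\alpha}, p_1^{*\alpha})$ on $\partial D$.

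Setting $({\bf W}, Q) := ({\bf u}_1^\alpha - \tilde{\bf u}^{*\alpha}, p_1^\alpha - \tilde p^{*\alpha})$ produces a Stokes system in $\Omega$ with vanishing Dirichlet data on $\partial\Omega$ and forcing $\tilde{\bf f} := \mu\Delta\tilde{\bf u}^{*\alpha} - \nabla\tilde p^{*\alpha}$ supported near the inclusions. Applying the global energy argument of Lemma \ref{lemmaenergy} to $({\bf W}, Q)$, with hypothesis \eqref{int-fw} verified via the polynomial-in-$x_2$ structure of $\tilde{\bf f}$ used in Subsections \ref{subsec1}, \ref{subsec2}, and \ref{subsec3}, should yield
\[
 \|\nabla{\bf W}\|_{L^2(\Omega)} + \|Q - (Q)_\Omega\|_{L^2(\Omega)} \leq C\rho_2^\alpha(\varepsilon),
\]
where the rate is dictated by the $L^2$-size of $\tilde{\bf f}$ in $\Omega_R$, governed by the pointwise bounds \eqref{estdivv11p1}, \eqref{estDivv12} and \eqref{estv131-p131}--\eqref{estv132-p132}. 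Since $\partial D$ lies at distance at least $\kappa_0$ from the inclusions, $\tilde{\bf f}$ vanishes in a collar $\mathcal{U}$ around $\partial D$, and interior Stokes regularity (Theorem \ref{thmWmq}, applied with the portion of $\partial D$ as the smooth boundary piece) upgrades the $L^2$ bound to
\[
 \|\nabla{\bf W}\|_{L^\infty(\mathcal{U})} + \|Q - (Q)_\Omega\|_{L^\infty(\mathcal{U})} \leq C\rho_2^\alpha(\varepsilon).
\]
Since $(\tilde{\bf u}^{*\alpha}, \tilde p^{*\alpha}) \equiv ({\bf u}_1^{*\alpha}, p_1^{*\alpha})$ in $\mathcal{U}$, the bound \eqref{boundaryup} follows.

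The main obstacle will be designing the extension across the $O(\varepsilon)$-strips so that both the divergence error and the Stokes residual are of the prescribed size $\rho_2^\alpha(\varepsilon)$ in $L^2$. The neck piece is already optimal by the construction of $({\bf v}_1^\alpha, \bar p_1^\alpha)$, but the Bogovskii corrector in the strips must absorb the $\varepsilon$-translation between $\partial D_i$ and $\partial D_i^0$ while keeping all errors small; this relies on uniform $C^{1,\alpha}$ regularity of ${\bf u}_1^{*\alpha}$ up to $\partial D_i^0$ away from the origin, which is standard. Verifying hypothesis \eqref{int-fw} for the glued $\tilde{\bf f}$ is then a case-by-case transcription of the three treatments in Section \ref{sec_estimate2D}, since the neck contribution to $\tilde{\bf f}$ inherits the polynomial structure from $\bar p_1^\alpha$ and ${\bf v}_1^\alpha$, while the strip contribution is of bounded size on an $O(\varepsilon)$-measure set.
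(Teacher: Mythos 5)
There is a genuine gap at the central step: the claim that the global energy argument of Lemma \ref{lemmaenergy} yields $\|\nabla{\bf W}\|_{L^2(\Omega)}+\|Q-(Q)_\Omega\|_{L^2(\Omega)}\leq C\rho_2^{\alpha}(\varepsilon)$. Lemma \ref{lemmaenergy} (and the verification of \eqref{int-fw} via the polynomial-in-$x_2$ structure) only produces the bound $\int_\Omega|\nabla{\bf W}|^2\leq C$, i.e.\ boundedness, never smallness; and the rate cannot be ``dictated by the $L^2$-size of $\tilde{\bf f}$ in $\Omega_R$'' because that size is not small — for $\alpha=1$, $|{\bf f}_1^1|\leq C/\delta(x_1)$ gives $\int_{\Omega_R}|{\bf f}_1^1|^2\sim\varepsilon^{-1/2}$, which blows up as $\varepsilon\to0$, and the situation is worse for $\alpha=2,3$. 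Moreover the smallness you want is in fact false at the global level: by Proposition \ref{propu11} the difference ${\bf u}_1^{1}-{\bf v}_1^{1}$ has gradient of order one throughout the neck, so with your choice of extension ($\tilde{\bf u}^{*\alpha}={\bf v}_1^{\alpha}$ in $\Omega_R$) the energy $\int_\Omega|\nabla{\bf W}|^2$ is genuinely $O(1)$, not $O(\rho_2^{\alpha}(\varepsilon)^2)$. Consequently the final step — upgrading by Theorem \ref{thmWmq} in a collar around $\partial D$ — only returns an $O(1)$ bound there, and \eqref{boundaryup} does not follow. An $O(1)$ global energy bound carries no information about decay of the difference away from the neck; what is needed is a mechanism that propagates the $\varepsilon$-smallness of the data to the outer boundary.

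This is precisely what the paper's proof supplies, and it is where your route and the actual argument diverge irreparably. The paper works in $V\setminus\mathcal{C}_{\varepsilon^{\theta}}$ (excising a thin cylinder around the touching point), shows pointwise that the boundary data of ${\bf u}_1^{\alpha}-{\bf u}_1^{*\alpha}$ is small on every piece of $\partial(V\setminus\mathcal{C}_{\varepsilon^{\theta}})$ — of size $C\varepsilon$ on $\partial D_i\setminus D_i^0$ by the mean value theorem, $C\varepsilon^{1-2\theta}$ on $\partial D_i^0\setminus(D_i\cup\mathcal{C}_{\varepsilon^{\theta}})$ using the gradient bound of Proposition \ref{propu11}, and $C(\varepsilon^{2\theta}+\varepsilon^{1-2\theta})$ on the lateral cylinder $|x_1|=\varepsilon^{\theta}$ by comparing the two Keller-type functions ${\bf v}_1^{\alpha}$ and ${\bf v}_1^{*\alpha}$ and integrating $\partial_{x_2}$ across the thin gap — then optimizes $\theta=1/4$, invokes the maximum modulus principle for the Stokes system to transfer this $O(\sqrt\varepsilon)$ smallness into $V\setminus\mathcal{C}_{\varepsilon^{1/4}}$, and finally uses standard boundary regularity at $\partial D$ (where the difference vanishes) to obtain the $C^1\times C^0$ estimate \eqref{boundaryup}. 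If you wish to salvage an energy-based variant, you would still need some substitute for this localization/decay step (e.g.\ a weighted or localized estimate showing the difference is small outside the neck); the unweighted global energy of the difference simply is not small.
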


\begin{proof}
Notice that $({\bf u}_{1}^{\alpha}-{\bf u}_{1}^{*\alpha},p_1^{\alpha}-p_1^{*\alpha})$ satisfies
\begin{equation*}
\begin{cases}
\nabla\cdot\sigma[{\bf u}_{1}^{\alpha}-{\bf u}_{1}^{*\alpha},p_{1}^{\alpha}-p_{1}^{*\alpha}]=0,&\mathrm{in}~V:=D\setminus\overline{D_{1}\cup D_{2}\cup D_{1}^{0}\cup D_{2}^{0}},\\
\nabla\cdot ({\bf u}_{1}^{\alpha}-{\bf u}_{1}^{*\alpha})=0,&\mathrm{in}~V,\\
{\bf u}_{1}^{\alpha}-{\bf u}_{1}^{*\alpha}={\boldsymbol\psi}_{\alpha}-{\bf u}_{1}^{*\alpha},&\mathrm{on}~\partial{D}_{1}\setminus D_{1}^{0},\\
{\bf u}_{1}^{\alpha}-{\bf u}_{1}^{*\alpha}=-{\bf u}_{1}^{*\alpha},&\mathrm{on}~\partial{D}_{2}\setminus D_{2}^{0},\\
{\bf u}_{1}^{\alpha}-{\bf u}_{1}^{*\alpha}={\bf u}_{1}^{\alpha}-{\boldsymbol\psi}_{\alpha},&\mathrm{on}~\partial{D}_{1}^{0}\setminus(D_{1}\cup\{0\}),\\
{\bf u}_{1}^{\alpha}-{\bf u}_{1}^{*\alpha}={\bf u}_{1}^{\alpha},&\mathrm{on}~\partial{D}_{2}^{0}\setminus D_{2},\\
{\bf u}_{1}^{\alpha}-{\bf u}_{1}^{*\alpha}=0,&\mathrm{on}~\partial{D}.
\end{cases}
\end{equation*}
We will prove the case that $\alpha=1$  for instance, since the other cases are similar. 

If $x\in\partial{D}_{1}\setminus D_{1}^{0}\subset\Omega^{0}\setminus\Omega_{R}^{0}$, then we notice that the point $(x_1,x_{2}-\varepsilon/2)\in\Omega^{0}\setminus\Omega_{R}^{0}$. By using mean value theorem, 
\begin{align}\label{partial D11}
|({\bf u}_{1}^{1}-{\bf u}_{1}^{*1})(x_1,x_{2})|&=|(\boldsymbol\psi_{1}-{\bf u}_{1}^{*1})(x_1,x_{2})|=|{\bf u}_{1}^{*1}(x_1,x_{2}-\varepsilon/2)-{\bf u}_{1}^{*1}(x_1,x_{2})|\leq C\varepsilon.
\end{align}
Let
$$\mathcal{C}_{r}:=\left\{x\in\mathbb R^{2}\big| |x_1|<r,~-\frac{\varepsilon}{2}-r^2\leq x_{2}\leq\frac{\varepsilon}{2}+r^2\right\},\quad r<R.$$
If $x\in\partial{D}_{1}^{0}\setminus(D_{1}\cup\mathcal{C}_{\varepsilon^{\theta}})$, where $0<\theta<1$ is some constant to be determined later, then by mean value theorem  and Proposition \ref{propu11}, we have
\begin{align}\label{partial D11*}
|({\bf u}_{1}^{1}-{\bf u}_{1}^{*1})(x_1,x_{2})|&=|({\bf u}_{1}^{1}-\boldsymbol\psi_{1})(x_1,x_{2})|=|{\bf u}_{1}^{1}(x_1,x_{2})-{\bf u}_{1}^{1}(x_1,x_{2}+\varepsilon/2)|\nonumber\\
&\leq\frac{C\varepsilon}{\varepsilon+|x_1|^{2}}\leq C\varepsilon^{1-2\theta}.
\end{align}
Similarly, for $x\in\partial{D}_{2}\setminus D_{2}^{0}$, 
\begin{align}\label{partial D21}
|({\bf u}_{1}^{1}-{\bf u}_{1}^{*1})(x_1,x_{2})|\leq C\varepsilon,
\end{align}
and for $x\in\partial{D}_{2}^{0}\setminus(D_{2}\cup\mathcal{C}_{\varepsilon^{\theta}})$, we have
\begin{align}\label{partial D21*}
|({\bf u}_{1}^{1}-{\bf u}_{1}^{*1})(x_1,x_{2})|\leq\,C\varepsilon^{1-2\theta}.
\end{align}

Define another auxillary function
\begin{align*}
{\bf v}_{1}^{*1}=\boldsymbol\psi_{1}\Big(k^*(x)+\frac{1}{2}\Big)+\boldsymbol\psi_{2}x_{1}\Big((k^*(x))^2-\frac{1}{4}\Big),\quad\hbox{in}\ \Omega_{2R}^*,
\end{align*}
and $\|{\bf v}_{1}^{*1}\|_{C^{2}(\Omega^0\setminus\Omega_{R}^{0})}\leq\,C$. Recalling \eqref{v11}, we have 
\begin{equation*}
\left|\partial_{x_{2}}({\bf v}_{1}^{1}-{\bf v}_{1}^{*1})\right|\leq \frac{C\varepsilon}{|x_1|^{2}(\varepsilon+|x_1|^{2})}.
\end{equation*}
Then for $x\in\Omega_{R}^{0}$ with $|x_1|=\varepsilon^{\theta}$, combining with Proposition \ref{propu11}, we obtain
\begin{align}\label{partial x2 u11}
\left|\partial_{x_{2}}({\bf u}_{1}^{1}-{\bf u}_{1}^{*1})(x_1,x_{2})\right|
&=\left|\partial_{x_{2}}({\bf u}_{1}^{1}-{\bf v}_{1}^{1})+\partial_{x_{2}}({\bf v}_{1}^{1}-{\bf v}_{1}^{*1})
+\partial_{x_{2}}({\bf v}_{1}^{*1}-{\bf u}_{1}^{*1})\right|(x_1,x_{2})\nonumber\\
&\leq  C\left(1+\frac{1}{\varepsilon^{4\theta-1}}\right).
\end{align}
Thus, for $x\in\Omega_{R}^{0}$ with $|x_1|=\varepsilon^{\theta}$, by using the triangle inequality, \eqref{partial D21*}, the mean value theorem, and \eqref{partial x2 u11}, 
\begin{align}\label{es v11*2D}
\left|({\bf u}_{1}^{1}-{\bf u}_{1}^{*1})(x_1,x_{2})\right|&\leq\left|({\bf u}_{1}^{1}-{\bf u}_{1}^{*1})(x_1,x_{2})-({\bf u}_{1}^{1}-{\bf u}_{1}^{*1})(x_1,-h_{2}(x_1)\right|
+C\varepsilon^{1-2\theta}\nonumber\\
&\leq\left|\partial_{x_{2}}({\bf u}_{1}^{1}-{\bf u}_{1}^{*1})\right|\cdot(h_{1}(x_1)+h_{2}(x_1))\Big|_{|x_1|=\varepsilon^{\theta}}+C\varepsilon^{1-2\theta}\nonumber\\
&\leq C\left(1+\frac{1}{\varepsilon^{4\theta-1}}\right)\varepsilon^{2\theta}+C\varepsilon^{1-2\theta}=C\left(\varepsilon^{2\theta}+\varepsilon^{1-2\theta}\right).
\end{align}	
By taking $2\theta=1-2\theta$, we get $\theta=\frac{1}{4}$. Substituting it into \eqref{partial D11*}, \eqref{partial D21*} and \eqref{es v11*2D}, and using \eqref{partial D11}, \eqref{partial D21}, and ${\bf u}_{1}^{1}-{\bf u}_{1}^{*1}=0$ on $\partial D$, we obtain
\begin{align*}
|{\bf u}_{1}^{1}-{\bf u}_{1}^{*1}|\leq C\varepsilon^{1/2},\quad\mbox{on}~\partial{(V\setminus\mathcal{C}_{\varepsilon^{1/4}})}.
\end{align*}
Applying the maximum modulus for Stokes systems in $V\setminus\mathcal{C}_{\varepsilon^{1/4}}$ (see, for example, \cite{Lady,Mazya}), we obtain
\begin{align*}
|({\bf u}_{1}^{1}-{\bf u}_{1}^{*1})(x)|\leq C\varepsilon^{1/2},\quad\mbox{in}~ V\setminus \mathcal{C}_{\varepsilon^{1/4}}.
\end{align*}
In view of the  standard boundary  estimates for Stokes systems with ${\bf u}_{1}^{\alpha}-{\bf u}_{1}^{*\alpha}=0$ on $\partial{D}$ (see, for example \cite{Kratz,Mazya}), we obtain \eqref{boundaryup} with $\alpha=1$. The proof is finished.
\end{proof}

\subsection{Convergence of $\frac{C_{1}^{\alpha}+C_{2}^{\alpha}}{2}-C_{*}^{\alpha}$}
Next, we prove the convergence of $\frac{C_{1}^{\alpha}+C_{2}^{\alpha}}{2}-C_{*}^{\alpha}$ under the symmetric conditions $({\rm S_{H}})$ and $({\rm S_{\boldsymbol\varphi}})$ for simplicity. It follows from  \eqref{systemC2D} that
\begin{align}\label{C1C2_d}
\left\{
\begin{aligned}
\sum_{\alpha=1}^{3}C_{1}^{\alpha}a_{11}^{\alpha\beta}+\sum_{\alpha=1}^{3}C_{2}^{\alpha}a_{21}^{\alpha\beta}-b_{1}^{\beta}&=0,\\
\sum_{\alpha=1}^{3}C_{1}^{\alpha}a_{12}^{\alpha\beta}+\sum_{\alpha=1}^{3}C_{2}^{\alpha}a_{22}^{\alpha\beta}-b_{2}^{\beta}&=0,
\end{aligned}
\right.\quad\quad~~\beta=1,2,3.
\end{align}
By a rearrangement, from the first line of \eqref{C1C2_d}, we have
\begin{equation*}
\sum_{\alpha=1}^{3}(C_{1}^{\alpha}+C_{2}^{\alpha})a_{11}^{\alpha\beta}+\sum_{\alpha=1}^{3}C_{2}^{\alpha}(a_{21}^{\alpha\beta}-a_{11}^{\alpha\beta})-b_{1}^{\beta}=0,
\end{equation*}
and
\begin{equation*}
\sum_{\alpha=1}^{3}(C_{1}^{\alpha}+C_{2}^{\alpha})a_{21}^{\alpha\beta}+\sum_{\alpha=1}^{3}C_{1}^{\alpha}(a_{11}^{\alpha\beta}-a_{21}^{\alpha\beta})-b_{1}^{\beta}=0.
\end{equation*}
Adding these two equations together leads to
\begin{equation}\label{C1C2_1}
\sum_{\alpha=1}^{3}(C_{1}^{\alpha}+C_{2}^{\alpha})(a_{11}^{\alpha\beta}+a_{21}^{\alpha\beta})+\sum_{\alpha=1}^{3}(C_{1}^{\alpha}-C_{2}^{\alpha})(a_{11}^{\alpha\beta}-a_{21}^{\alpha\beta})-2b_{1}^{\beta}=0.
\end{equation}
Similarly, for the second equation of \eqref{C1C2_d}, we have 
\begin{equation}\label{C1C2_22}
\sum_{\alpha=1}^{3}(C_{1}^{\alpha}+C_{2}^{\alpha})(a_{12}^{\alpha\beta}+a_{22}^{\alpha\beta})+\sum_{\alpha=1}^{3}(C_{1}^{\alpha}-C_{2}^{\alpha})(a_{12}^{\alpha\beta}-a_{22}^{\alpha\beta})-2b_{2}^{\beta}=0.
\end{equation}
Then, adding \eqref{C1C2_1} and \eqref{C1C2_22} together, and dividing by two, we obtain
\begin{equation}\label{C1+C2_1}
\sum_{\alpha=1}^{3}\frac{C_{1}^{\alpha}+C_{2}^{\alpha}}{2}a^{\alpha\beta}
+\sum_{\alpha=1}^{3}\frac{C_{1}^{\alpha}-C_{2}^{\alpha}}{2}(a_{11}^{\alpha\beta}-a_{22}^{\alpha\beta}+a_{12}^{\alpha\beta}-a_{21}^{\alpha\beta})-(b_{1}^{\beta}+b_{2}^{\beta})=0,
\end{equation}
where 
\begin{equation}\label{def_a}
a^{\alpha\beta}=\sum_{i,j=1}^{2}a_{ij}^{\alpha\beta}=-\left(\int_{\partial{D}_{1}}{\boldsymbol\psi}_\beta\cdot\sigma[{\bf u}^\alpha,p^{\alpha}]\nu+\int_{\partial{D}_{2}}{\boldsymbol\psi}_\beta\cdot\sigma[{\bf u}^\alpha,p^{\alpha}]\nu\right),
\end{equation}
and ${\bf u}^{\alpha}:={\bf u}_{1}^{\alpha}+{\bf u}_{2}^{\alpha}$, $p^{\alpha}:=p_{1}^{\alpha}+p_{2}^{\alpha}$ satisfy
\eqref{def valpha}.

On the other hand, from the third line of \eqref{maineqn touch}, we deduce
\begin{align}\label{equ_C*alpha}
&\sum_{\alpha=1}^{3}C_{*}^{\alpha}a_{*}^{\alpha\beta}-(b_{1}^{*\beta}+b_{2}^{*\beta})=0,\quad\beta=1,2,3,
\end{align}
where, corresponding to \eqref{def_a},
\begin{align}\label{b*jbeta}
a_{*}^{\alpha\beta}=-\int_{\partial{D}_{1}^{0}\cup\partial{D}_{2}^{0}}{\boldsymbol\psi}_\beta\cdot\sigma[{\bf u}^{*\alpha},p^{*\alpha}]\nu,\quad b_{j}^{*\beta}=\int_{\partial{D}_{j}^{0}}{\boldsymbol\psi}_\beta\cdot\sigma[{\bf u}_0^*,p_0^{*}]\nu,\quad j=1,2.
\end{align}
Here we remark that
$${\bf u}^{*\alpha}:={\bf u}_{1}^{*\alpha}+{\bf u}_{2}^{*\alpha},\quad\mbox{and}~ p^{*\alpha}:=p_{1}^{*\alpha}+p_{2}^{*\alpha}.$$

\begin{lemma}\label{es b1 b1* beta=1}
Let $b_{1}^{\beta}$ and $b_{1}^{*\beta}$ be defined in \eqref{aijbj} and \eqref{b*jbeta}, respectively. Then we have 	
\begin{equation}\label{est v0-v0*}
\left|b_{1}^{\beta}[{\boldsymbol\varphi}]-b_{1}^{*\beta}[{\boldsymbol\varphi}]\right|\leq C\,\rho_{2}^{\beta}(\varepsilon)\|{\boldsymbol\varphi}\|_{L^{\infty}(\partial D)},
\end{equation}
where $b_{1}^{\beta}[{\boldsymbol\varphi}]$ is defined by \eqref{aijbj}, and $b_{1}^{*\beta}[{\boldsymbol\varphi}]$  by \eqref{b*jbeta}.
\end{lemma}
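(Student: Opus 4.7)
The strategy is to use Green's identity twice to transfer the boundary integrals defining $b_{1}^{\beta}$ and $b_{1}^{*\beta}$ from the inclusion boundaries to the fixed outer boundary $\partial D$, where the estimate \eqref{boundaryup} of Lemma \ref{lem difference v11} applies directly. The key observation is that $({\bf u}_{0},p_{0})$ and $({\bf u}_{1}^{\beta},p_{1}^{\beta})$ have complementary boundary supports---one is active on $\partial D$, the other on $\partial D_{1}$---so the transfer can be carried out using the symmetry of the elastic bilinear form $(U,V)\mapsto\int_{\Omega}(2\mu\,e(U),e(V))\,dx$.

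First I would establish the identity
$$b_{1}^{\beta}[{\boldsymbol\varphi}] = -\int_{\partial D}{\boldsymbol\varphi}\cdot\sigma[{\bf u}_{1}^{\beta},p_{1}^{\beta}]\nu\,dS.$$
Plugging ${\bf u}_{1}^{\beta}$ as a test function against $\nabla\cdot\sigma[{\bf u}_{0},p_{0}]=0$ in $\Omega$ and using ${\bf u}_{1}^{\beta}={\boldsymbol\psi}_{\beta}$ on $\partial D_{1}$, ${\bf u}_{1}^{\beta}=0$ on $\partial D_{2}\cup\partial D$, together with $\nabla\cdot{\bf u}_{1}^{\beta}=0$, one obtains $-b_{1}^{\beta}=\int_{\Omega}(2\mu\,e({\bf u}_{0}),e({\bf u}_{1}^{\beta}))\,dx$, which is precisely the second formula in \eqref{defaij}. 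By symmetry of this bilinear form, the same quantity equals $\int_{\Omega}(2\mu\,e({\bf u}_{1}^{\beta}),e({\bf u}_{0}))\,dx$, and a second integration by parts---now with ${\bf u}_{0}$ as the test function against $\nabla\cdot\sigma[{\bf u}_{1}^{\beta},p_{1}^{\beta}]=0$ and using ${\bf u}_{0}=0$ on $\partial D_{1}\cup\partial D_{2}$, ${\bf u}_{0}={\boldsymbol\varphi}$ on $\partial D$---produces the displayed identity. The identical computation in $\Omega^{0}$ gives
$$b_{1}^{*\beta}[{\boldsymbol\varphi}] = -\int_{\partial D}{\boldsymbol\varphi}\cdot\sigma[{\bf u}_{1}^{*\beta},p_{1}^{*\beta}]\nu\,dS.$$
Subtracting these two identities and invoking the pointwise bound $|\sigma[{\bf u}_{1}^{\beta},p_{1}^{\beta}]-\sigma[{\bf u}_{1}^{*\beta},p_{1}^{*\beta}]|\leq 2\mu|\nabla({\bf u}_{1}^{\beta}-{\bf u}_{1}^{*\beta})|+|p_{1}^{\beta}-p_{1}^{*\beta}|$ on $\partial D$, together with \eqref{boundaryup}, immediately yields
$$|b_{1}^{\beta}[{\boldsymbol\varphi}]-b_{1}^{*\beta}[{\boldsymbol\varphi}]| \leq C\rho_{2}^{\beta}(\varepsilon)\|{\boldsymbol\varphi}\|_{L^{\infty}(\partial D)},$$
which is \eqref{est v0-v0*}.

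Two technical points require care. Since $p_{1}^{\beta}$ and $p_{1}^{*\beta}$ are only defined up to additive constants, the right-hand side of the representation for $b_{1}^{\beta}$ is \emph{a priori} normalization-dependent; however, replacing $p_{1}^{\beta}$ by $p_{1}^{\beta}+c$ shifts the integral by $-c\int_{\partial D}{\boldsymbol\varphi}\cdot\nu\,dS$, which vanishes by the compatibility condition \eqref{compatibility}, so we are free to fix the constants so that \eqref{boundaryup} applies as stated. The main obstacle is that $\partial\Omega^{0}$ has a cusp at the origin where $\partial D_{1}^{0}$ meets $\partial D_{2}^{0}$, so the integration by parts for $({\bf u}_{1}^{*\beta},p_{1}^{*\beta})$ in $\Omega^{0}$ needs a standard approximation: one removes $\Omega^{0}\cap B_{\rho}(0)$, performs the computation in the resulting smooth domain, and uses the finite-energy bound on ${\bf u}_{1}^{*\beta}$ to show that the boundary contributions on $\partial B_{\rho}\cap\Omega^{0}$ vanish as $\rho\to 0^{+}$.
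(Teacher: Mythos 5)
Your proposal is correct and follows essentially the same route as the paper: both transfer $b_{1}^{\beta}$ and $b_{1}^{*\beta}$ to integrals of ${\boldsymbol\varphi}\cdot\sigma[\,\cdot\,]\nu$ over $\partial D$ by integration by parts (the paper states the identities $b_{1}^{\beta}[{\boldsymbol\varphi}]=-\int_{\partial D}{\boldsymbol\varphi}\cdot\sigma[{\bf u}_1^\beta,p_1^{\beta}]\nu$ and its starred analogue directly), then subtract and apply the boundary estimate \eqref{boundaryup} of Lemma \ref{lem difference v11}. Your extra remarks on the pressure normalization and the cusp of $\Omega^{0}$ are reasonable technical refinements that the paper leaves implicit.
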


\begin{proof}
Recalling the definitions \eqref{defaij}, \eqref{equ_v12D} and \eqref{equ_v32D}, using the integration by parts, we have 
\begin{align*}
b_{1}^{\beta}[{\boldsymbol\varphi}]=\int_{\partial D_{1}}{\boldsymbol\psi}_\beta\cdot\sigma[{\bf u}_0,p_0]\nu&=-\int_{\partial D}{\boldsymbol\varphi}\cdot\sigma[{\bf u}_1^\beta,p_1^{\beta}]\nu.
\end{align*}
Similarly, in view of \eqref{def valpha*}, \eqref{equ_v3*}, and \eqref{b*jbeta}, we deduce 
\begin{align*}
b_{1}^{*\beta}[{\boldsymbol\varphi}]=-\int_{\partial D}{\boldsymbol\varphi}\cdot\sigma[{\bf u}_1^{*\beta},p_1^{*\beta}]\nu.
\end{align*}
Thus, 
\begin{equation*}
b_{1}^{\beta}[{\boldsymbol\varphi}]-b_{1}^{*\beta}[{\boldsymbol\varphi}]=-\int_{\partial D}{\boldsymbol\varphi}\cdot\sigma[{\bf u}_1^\beta-{\bf u}_1^{*\beta},p_1^{\beta}-p_1^{*\beta}]\nu.
\end{equation*}
Applying Lemma \ref{lem difference v11} implies \eqref{est v0-v0*}.
\end{proof}

\begin{lemma}\label{lem valpha1}
Let ${\bf u}^{\alpha}$ and ${\bf u}^{*\alpha}$ be defined by \eqref{def valpha} and \eqref{def valpha*}, respectively, $\alpha=1,2,3$. Then 
\begin{align}\label{est v11 v11*}
\left|\int_{\partial D_{1}}{\boldsymbol\psi}_\beta\cdot\sigma[{\bf u}^{\alpha},p^{\alpha}]\nu-\int_{\partial D_{1}^{0}}{\boldsymbol\psi}_\beta\cdot\sigma[{\bf u}^{*\alpha},p^{*\alpha}]\nu\right|\leq
C\,\rho_{2}^{\beta}(\varepsilon)\|{\boldsymbol\psi}_{\alpha}\|_{L^{\infty}(\partial D)}.
\end{align}
\end{lemma}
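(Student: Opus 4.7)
The plan is to reduce \eqref{est v11 v11*} to a single boundary integral over the fixed outer boundary $\partial D$, where Lemma \ref{lem difference v11} supplies pointwise control. All of the domain-dependent information (the moving boundary $\partial D_1$ versus $\partial D_1^0$) will be absorbed into an integration-by-parts step that exploits the two identities defining the space $\Psi$, namely $e({\boldsymbol\psi}_\alpha)=0$ and $\nabla\cdot{\boldsymbol\psi}_\alpha=0$.

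First I would apply the divergence theorem on $\Omega$, testing the Stokes equation for $({\bf u}^\alpha,p^\alpha)$ against ${\bf u}_1^\beta$. Using $\nabla\cdot\sigma[{\bf u}^\alpha,p^\alpha]=0$, $\nabla\cdot{\bf u}_1^\beta=0$, and the boundary data ${\bf u}_1^\beta={\boldsymbol\psi}_\beta$ on $\partial D_1$ and ${\bf u}_1^\beta=0$ on $\partial D_2\cup\partial D$, this produces
\begin{equation*}
-\int_{\partial D_1}{\boldsymbol\psi}_\beta\cdot\sigma[{\bf u}^\alpha,p^\alpha]\nu=\int_\Omega 2\mu\,e({\bf u}^\alpha):e({\bf u}_1^\beta)\,\mathrm{d}x.
\end{equation*}
Performing the symmetric integration in the opposite order (the equation for $({\bf u}_1^\beta,p_1^\beta)$ tested against ${\bf u}^\alpha$, combined with the boundary data of ${\bf u}^\alpha$) yields the same energy integral on the right, and hence
\begin{equation*}
\int_{\partial D_1}{\boldsymbol\psi}_\beta\cdot\sigma[{\bf u}^\alpha,p^\alpha]\nu=\int_{\partial D_1\cup\partial D_2}{\boldsymbol\psi}_\alpha\cdot\sigma[{\bf u}_1^\beta,p_1^\beta]\nu.
\end{equation*}
To relocate this integral onto $\partial D$, I would perform one further integration by parts on $\Omega$, now testing the equation of $({\bf u}_1^\beta,p_1^\beta)$ against the rigid motion ${\boldsymbol\psi}_\alpha$ itself. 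Because $e({\boldsymbol\psi}_\alpha)=0$ and $\nabla\cdot{\boldsymbol\psi}_\alpha=0$, both interior terms $\int_\Omega 2\mu\,e({\bf u}_1^\beta):e({\boldsymbol\psi}_\alpha)\,\mathrm{d}x$ and $\int_\Omega p_1^\beta\nabla\cdot{\boldsymbol\psi}_\alpha\,\mathrm{d}x$ vanish, so the entire boundary integral vanishes and yields
\begin{equation*}
\int_{\partial D_1\cup\partial D_2}{\boldsymbol\psi}_\alpha\cdot\sigma[{\bf u}_1^\beta,p_1^\beta]\nu=\int_{\partial D}{\boldsymbol\psi}_\alpha\cdot\sigma[{\bf u}_1^\beta,p_1^\beta]\nu.
\end{equation*}

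An identical chain of identities carried out on the limit domain $\Omega^0$, using \eqref{def valpha*} and \eqref{equ_ui*alpha}, produces the starred counterpart
\begin{equation*}
\int_{\partial D_1^0}{\boldsymbol\psi}_\beta\cdot\sigma[{\bf u}^{*\alpha},p^{*\alpha}]\nu=\int_{\partial D}{\boldsymbol\psi}_\alpha\cdot\sigma[{\bf u}_1^{*\beta},p_1^{*\beta}]\nu.
\end{equation*}
Subtracting these two representations converts the left-hand side of \eqref{est v11 v11*} exactly into
\begin{equation*}
\int_{\partial D}{\boldsymbol\psi}_\alpha\cdot\sigma\bigl[{\bf u}_1^\beta-{\bf u}_1^{*\beta},\,p_1^\beta-p_1^{*\beta}\bigr]\nu,
\end{equation*}
and the pointwise estimate \eqref{boundaryup} of Lemma \ref{lem difference v11} applied on $\partial D$ (with $\alpha$ replaced by $\beta$) bounds this by $C\rho_2^\beta(\varepsilon)\|{\boldsymbol\psi}_\alpha\|_{L^\infty(\partial D)}$, which is the desired conclusion.

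The only step that really requires insight is the second integration by parts. The gain is that the rigid-motion identities kill every interior contribution, so the proposition reduces to a purely far-field statement, which is exactly what Lemma \ref{lem difference v11} is designed to supply. A direct approach that tried to compare the $\partial D_1$ and $\partial D_1^0$ integrals using the interior stress estimates of Propositions \ref{propu11}–\ref{propu13} would instead have to absorb the $\varepsilon$-blowup of $\sigma[{\bf u}^\alpha,p^\alpha]$ near the touching point, and could not deliver a bound of order $\rho_2^\beta(\varepsilon)$.
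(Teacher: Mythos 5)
Your proposal is correct and follows essentially the same route as the paper: both use the Stokes reciprocity (Green's) identity to rewrite each boundary integral as $\int_{\partial D}{\boldsymbol\psi}_\alpha\cdot\sigma[{\bf u}_1^{\beta},p_1^{\beta}]\nu$ (and its starred analogue) and then conclude with the boundary estimate \eqref{boundaryup} of Lemma \ref{lem difference v11}. The only cosmetic difference is that the paper absorbs your third integration by parts by testing with ${\bf u}^{\alpha}-{\boldsymbol\psi}_\alpha$ from the start (legitimate since $\sigma[{\boldsymbol\psi}_\alpha,0]=0$), which is equivalent to your separate step using $e({\boldsymbol\psi}_\alpha)=0$ and $\nabla\cdot{\boldsymbol\psi}_\alpha=0$.
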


\begin{proof}
For $\alpha=1,2,3$, it follows from \eqref{def valpha} that
\begin{equation*}
\begin{cases}
\nabla\cdot\sigma[{\bf u}^{\alpha}-{\boldsymbol\psi}_{\alpha},p^{\alpha}]=0,\quad\nabla\cdot ({\bf u}^{\alpha}-{\boldsymbol\psi}_{\alpha})=0,&\mbox{in}~\Omega,\\
{\bf u}^{\alpha}-{\boldsymbol\psi}_{\alpha}=0,&\mbox{on}~\partial{D}_{1}\cup\partial{D}_{2},\\
{\bf u}^{\alpha}-{\boldsymbol\psi}_{\alpha}=-{\boldsymbol\psi}_{\alpha},&\mbox{on}~\partial D.
\end{cases}	
\end{equation*}
Using the integration by parts, for $\alpha=1,2,3$,
\begin{align*}
\int_{\partial D_{1}}{\boldsymbol\psi}_\beta\cdot\sigma[{\bf u}^{\alpha},p^{\alpha}]\nu=\int_{\partial D_{1}}{\boldsymbol\psi}_\beta\cdot\sigma[{\bf u}^{\alpha}-{\boldsymbol\psi}_{\alpha},p^{\alpha}]\nu=\int_{\partial D}{\boldsymbol\psi}_\alpha\cdot\sigma[{\bf u}_1^{\beta},p_1^{\beta}]\nu.
\end{align*}
Similarly,
\begin{align*}
\int_{\partial D_{1}^{0}}{\boldsymbol\psi}_\beta\cdot\sigma[{\bf u}^{*\alpha},p^{*\alpha}]\nu=\int_{\partial D}{\boldsymbol\psi}_\alpha\cdot\sigma[{\bf u}_1^{*\beta},p_1^{*\beta}]\nu.
\end{align*}
Hence,
\begin{equation*}
\int_{\partial D_{1}}{\boldsymbol\psi}_\beta\cdot\sigma[{\bf u}^{\alpha},p^{\alpha}]\nu-\int_{\partial D_{1}^{0}}{\boldsymbol\psi}_\beta\cdot\sigma[{\bf u}^{*\alpha},p^{*\alpha}]\nu=\int_{\partial D}{\boldsymbol\psi}_\alpha\cdot\sigma[{\bf u}_1^{\beta}-{\bf u}_1^{*\beta},p_1^{\beta}-p_1^{*\beta}]\nu.
\end{equation*}
Thus, by virtue of Lemma \ref{lem difference v11}, we obtain  \eqref{est v11 v11*}. 
\end{proof}

Next, we prove the convergence of $\frac{C_1^\alpha+C_2^\alpha}{2}-C_*^\alpha$.

\begin{prop}\label{propC2D}
Assume that $D_{1}\cup D_{2}$ and $D$ satisfies $({\rm S_{H}})$, and ${\boldsymbol\varphi}$ satisfies the symmetric condition $({\rm S_{{\boldsymbol\varphi}}})$. Let $C_{1}^{\alpha}, C_{2}^{\alpha}$, and $C_{*}^{\alpha}$ be defined in \eqref{ud} and \eqref{maineqn touch}, respectively. Then
\begin{equation*}	C_1^\alpha+C_2^\alpha=C_*^\alpha\equiv0,~\alpha=1,2,\quad\mbox{and}~\left|\frac{C_1^3+C_2^3}{2}-C_*^3\right|\leq C\sqrt{\varepsilon}.
\end{equation*}
\end{prop}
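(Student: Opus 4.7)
The plan is to combine the symmetries $(\mathrm{S_{H}})$ and $(\mathrm{S_{\boldsymbol\varphi}})$ with the linear systems \eqref{C1+C2_1} and \eqref{equ_C*alpha} satisfied by $\tfrac{C_1^\alpha+C_2^\alpha}{2}$ and $C_*^\alpha$, respectively. First I would verify that under the central symmetry $x\mapsto -x$ (an isometry of $\Omega$ by $(\mathrm{S_{H}})$ that flips ${\boldsymbol\varphi}$ by $(\mathrm{S_{\boldsymbol\varphi}})$), uniqueness for the Stokes system \eqref{sto} forces ${\bf u}(x)=-{\bf u}(-x)$ throughout $D$. Evaluating the rigid-motion decomposition ${\bf u}|_{D_i}=\sum_\alpha C_i^\alpha{\boldsymbol\psi}_\alpha$ on a point $x\in D_1$ and its image $-x\in D_2$, and using ${\boldsymbol\psi}_1(-x)={\boldsymbol\psi}_1$, ${\boldsymbol\psi}_2(-x)={\boldsymbol\psi}_2$, ${\boldsymbol\psi}_3(-x)=-{\boldsymbol\psi}_3(x)$, I read off $C_1^1=-C_2^1$, $C_1^2=-C_2^2$, $C_1^3=C_2^3$. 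In particular $C_1^\alpha+C_2^\alpha=0$ for $\alpha=1,2$. Applying the same antisymmetry argument to the limit problem \eqref{maineqn touch}, where by construction $\partial D_1^0$ and $\partial D_2^0$ carry the \emph{common} constants $C_*^\alpha$, and restricting the identity to $\partial D_1^0$ alone, forces $C_*^1=C_*^2=0$.

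Next, for the $\alpha=3$ case, my plan is to exploit a second symmetry: the reflection $T(x_1,x_2)=(x_1,-x_2)$, which under $(\mathrm{S_{H}})$ exchanges $D_1$ and $D_2$. Pushing ${\bf u}_1^\alpha$ through $T$ and flipping its second component produces $\tilde{\bf u}_1^\alpha(x):=(({\bf u}_1^\alpha)^{(1)}(Tx),-({\bf u}_1^\alpha)^{(2)}(Tx))$, which is a divergence-free Stokes field solving \eqref{equ_v12D} with $i=2$ and boundary datum $s_\alpha{\boldsymbol\psi}_\alpha$, where $s_1=1$ and $s_2=s_3=-1$. Uniqueness then yields ${\bf u}_2^\alpha=s_\alpha\tilde{\bf u}_1^\alpha$. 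Since $T$ is orthogonal and preserves $\Omega$, a change-of-variables in the bilinear form $\int_\Omega e(\cdot):e(\cdot)\,dx$ supplies the identities
\begin{equation*}
a_{22}^{\alpha\beta}=s_\alpha s_\beta\,a_{11}^{\alpha\beta},\qquad a_{12}^{\alpha\beta}=s_\alpha s_\beta\,a_{21}^{\alpha\beta}.
\end{equation*}
Consequently the bracketed combination $\Delta^{\alpha 3}:=a_{11}^{\alpha 3}-a_{22}^{\alpha 3}+a_{12}^{\alpha 3}-a_{21}^{\alpha 3}$ appearing in \eqref{C1+C2_1} factors as $(1-s_\alpha s_3)(a_{11}^{\alpha 3}+a_{12}^{\alpha 3})$, which vanishes for $\alpha=2,3$ and, for $\alpha=1$, equals $2(a_{11}^{13}+a_{12}^{13})=O(1)$ by Lemma \ref{lema11}.

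Finally, inserting $C_1^\alpha+C_2^\alpha=0$ ($\alpha=1,2$) into \eqref{C1+C2_1} with $\beta=3$ and $C_*^1=C_*^2=0$ into \eqref{equ_C*alpha} with $\beta=3$, then subtracting, I obtain
\begin{equation*}
\Big(\tfrac{C_1^3+C_2^3}{2}-C_*^3\Big)a^{33}=(b_1^3-b_1^{*3})+(b_2^3-b_2^{*3})-C_*^3(a^{33}-a_*^{33})-(C_1^1-C_2^1)(a_{11}^{13}+a_{12}^{13}).
\end{equation*}
The right-hand side will be $O(\sqrt{\varepsilon})$ by Lemmas \ref{es b1 b1* beta=1} and \ref{lem valpha1} with $\beta=3$ (so $\rho_2^3(\varepsilon)=\sqrt\varepsilon$), by the boundedness of $|C_*^3|$, and by Proposition \ref{lemCialpha} ($|C_1^1-C_2^1|\le C\sqrt\varepsilon$). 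To divide by $a^{33}$, I still need a uniform lower bound $a^{33}\ge 1/C$; this will follow from $a_*^{33}=\int_{\Omega^0}2\mu|e({\bf u}^{*3})|^2\,dx>0$ (otherwise ${\bf u}^{*3}$ would be a rigid motion vanishing on $\partial D$, hence identically zero, contradicting ${\bf u}^{*3}={\boldsymbol\psi}_3$ on $\partial D_1^0$) combined with the convergence $a^{33}\to a_*^{33}$, itself a consequence of Lemma \ref{lem valpha1}. The main obstacle I anticipate is correctly tracking the signs $s_\alpha$ in Step 2 and securing the cancellation $\Delta^{33}=0$, which is precisely what neutralizes the singular coefficient $a_{11}^{33}\sim\varepsilon^{-1/2}$ and thereby preserves the $\sqrt{\varepsilon}$ rate.
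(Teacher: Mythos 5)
Your proposal is correct and its final step coincides with the paper's: reduce \eqref{C1+C2_1} and \eqref{equ_C*alpha} to the $\beta=3$ component, subtract, bound the right-hand side by $C\sqrt{\varepsilon}$ via Lemmas \ref{es b1 b1* beta=1} and \ref{lem valpha1}, Proposition \ref{lemCialpha} and Lemma \ref{lema11}, and divide by $a^{33}$. Where you differ is in how the symmetry input is produced. The paper never applies the symmetries to the full solution ${\bf u}$; it establishes the identities ${\bf u}_2^\alpha(x)=\pm{\bf u}_1^\alpha(-x)$ and the reflected versions, converts them into the coefficient relations \eqref{origin_sym02D}--\eqref{|symtilb002D}, and only then extracts $C_1^\alpha=-C_2^\alpha$ ($\alpha=1,2$), $C_1^3=C_2^3$ from the linear system \eqref{systemC2D} by Cramer's rule. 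You instead get these relations (and $C_*^1=C_*^2=0$) directly from uniqueness of \eqref{sto} and \eqref{maineqn touch} under the odd central symmetry, which is more economical and also makes the vanishing of all off-diagonal contributions unnecessary: once $C_1^\alpha+C_2^\alpha=0$ for $\alpha=1,2$, the terms $a^{13},a^{23}$ drop out of \eqref{C1+C2_1} regardless. Your sign bookkeeping $a_{22}^{\alpha\beta}=s_\alpha s_\beta a_{11}^{\alpha\beta}$, $a_{12}^{\alpha\beta}=s_\alpha s_\beta a_{21}^{\alpha\beta}$ reproduces exactly the paper's \eqref{12_sym2D}--\eqref{123_sym} and gives $\Delta^{23}=\Delta^{33}=0$, $\Delta^{13}=2(a_{11}^{13}+a_{12}^{13})$, matching the paper's vector $B$. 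Finally, you make explicit the uniform lower bound on $a^{33}$ (via $a_*^{33}=\int_{\Omega^0}2\mu|e({\bf u}^{*3})|^2>0$, which is $\varepsilon$-independent, plus $|a^{33}-a_*^{33}|\le C\sqrt{\varepsilon}$), whereas the paper only invokes positive definiteness with a citation to \cite{Li2021}; this is a welcome addition. Two small housekeeping points to include when writing it up: $|a_{12}^{13}|\le C$ needs the one-line deduction $a_{12}^{13}=-a_{21}^{13}$ together with $|a_{11}^{13}|,\,|a_{11}^{13}+a_{21}^{13}|\le C$ from Lemma \ref{lema11}, and the $\partial D_2$ versus $\partial D_2^0$ contributions in $b_2^3-b_2^{*3}$ and $a^{33}-a_*^{33}$ should be handled either by the reflection symmetry ($b_2^3=b_1^3$, etc.) or by stating the obvious $j=2$ analogues of Lemmas \ref{es b1 b1* beta=1} and \ref{lem valpha1}.
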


\begin{proof} 
In view of our hypotheses $({\rm S_{H}})$, we consider the symmetry of the domain with respect to the origin first, and observe that, for $\alpha=1,2$,
\begin{align*}
{\bf u}_{2}^{\alpha}(x)\big|_{\partial D_{1}}={\bf u}_{1}^{\alpha}(-x)\big|_{\partial D_{2}}=0,\quad{\bf u}_{2}^{\alpha}(x)\big|_{\partial D_{2}}={\bf u}_{1}^{\alpha}(-x)\big|_{\partial D_{1}}={\boldsymbol\psi}_{\alpha},
\end{align*}
and
\begin{align*}
{\bf u}_{2}^{\alpha}(x)\big|_{\partial D}={\bf u}_{1}^{\alpha}(-x)\big|_{\partial D}=0.
\end{align*}
Since $\nabla\cdot\sigma[{\bf u}_i^{\alpha},p_i^{\alpha}]=0$ in $\Omega$, we obtain $${\bf u}_{2}^{\alpha}(x)={\bf u}_{1}^{\alpha}(-x),\quad\mbox{in}~\Omega,\quad~\alpha=1,2.$$ 
Therefore, combining the definition of $a_{ij}^{\alpha\beta}$ defined in \eqref{defaij}, we have
\begin{equation}\label{origin_sym02D}
a_{11}^{\alpha\beta}=a_{22}^{\alpha\beta},\quad a_{12}^{\alpha\beta}=a_{21}^{\alpha\beta},\quad~\alpha,\beta=1,2.
\end{equation}	
For $\alpha=3$, since
\begin{align*} 
{\bf u}_{2}^{3}(x)\big|_{\partial D_{1}}=-{\bf u}_{1}^{3}(-x)\big|_{\partial D_{2}}&=0,\quad {\bf u}_{2}^{3}(x)\big|_{\partial D_{2}}=-{\bf u}_{1}^{3}(-x)\big|_{\partial D_{1}}={\boldsymbol\psi}_{3},\\
{\bf u}_{2}^{3}(x)\big|_{\partial D}&=-{\bf u}_{1}^{3}(-x)\big|_{\partial D}=0,
\end{align*}
it follows that ${\bf u}_{2}^{3}(x)=-{\bf u}_{1}^{3}(-x)$ in $\Omega$. Thus, 
\begin{align}\label{origin_sym2D}
a_{11}^{\alpha 3}=-a_{22}^{\alpha 3},~ a_{12}^{\alpha 3}=-a_{21}^{\alpha 3},\quad\alpha=1,2,
\quad\mbox{and}~ 
a_{11}^{33}=a_{22}^{33},~ a_{12}^{33}=a_{21}^{33}.
\end{align}
	
Secondly, due to the symmetry of the domain with respect to $\{x_{2}=0\}$, let us set
\begin{align*}
{\bf u}_{2}^{1}(x_1,x_{2})&=\big(
({\bf u}_{2}^{1})^{(1)}(x_1,x_{2}),
({\bf u}_{2}^{1})^{(2)}(x_1,x_{2})\big)^{\mathrm T}\\
&=\big(({\bf u}_{1}^1)^{(1)}(x_1,-x_{2}),
-({\bf u}_{1}^{1})^{(2)}(x_1,-x_{2})\big)^{\mathrm T},
\end{align*}
\begin{align*}
{\bf u}_{2}^{2}(x_1,x_{2})&=\big(
({\bf u}_{2}^{2})^{(1)}(x_1,x_{2}),
({\bf u}_{2}^{2})^{(2)}(x_1,x_{2})\big)^{\mathrm T}\\
&=\big(
-({\bf u}_{1}^2)^{(1)}(x_1,-x_{2}),
({\bf u}_{1}^{2})^{(2)}(x_1,-x_{2})\big)^{\mathrm T},
\end{align*}
and
\begin{align*}
{\bf u}_{2}^{3}(x_1,x_{2})&=\big(
({\bf u}_{2}^{3})^{(1)}(x_1,x_{2}),
({\bf u}_{2}^{3})^{(2)}(x_1,x_{2})\big)^{\mathrm T}\\
&=\big(
-({\bf u}_{1}^3)^{(1)}(x_1,-x_{2}),
({\bf u}_{1}^{3})^{(2)}(x_1,-x_{2})\big)^{\mathrm T}.
\end{align*}
Thus, for example,
\begin{align*}
\left(2\mu e({\bf u}_{2}^{1}), e({\bf u}_{2}^{2})\right)=&\,\mu\Big(2\partial_{x_1}({\bf u}_{2}^{1})^{(1)}\partial_{x_1}({\bf u}_{2}^{2})^{(1)}+2\partial_{x_2}({\bf u}_{2}^{1})^{(2)}\partial_{x_2}({\bf u}_{2}^{2})^{(2)}\\
&\quad+\big(\partial_{x_1}({\bf u}_{2}^{1})^{(2)}+\partial_{x_2}({\bf u}_{2}^{1})^{(1)}\big)\cdot\big(\partial_{x_1}({\bf u}_{2}^{2})^{(2)}+\partial_{x_2}({\bf u}_{2}^{2})^{(1)}\big)\\=&\,-\left(2\mu e({\bf u}_{1}^{1}), e({\bf u}_{1}^{2})\right),
\end{align*}
so that we have
\begin{equation}\label{12_sym2D}
a_{22}^{12}=-a_{11}^{12},\quad a_{12}^{12}=-a_{21}^{12}.
\end{equation}
Similarly, 
\begin{equation}\label{123_sym}
a_{22}^{13}=-a_{11}^{13},\quad a_{12}^{13}=-a_{21}^{13},\quad\mbox{but}~a_{22}^{23}=a_{11}^{23},\quad a_{12}^{23}=a_{21}^{23}.
\end{equation}
Then, combining \eqref{origin_sym02D}--\eqref{123_sym}, we deduce
\begin{align}\label{syma112D}
a_{11}^{12}=a_{22}^{12}=a_{12}^{12}=a_{21}^{12}=
a_{11}^{23}=a_{22}^{23}= a_{12}^{23}=a_{21}^{23}=0.
\end{align}
Thus, from \eqref{123_sym} and \eqref{syma112D},
\begin{equation}\label{symaalphabeta2D}
a^{\alpha\beta}=a^{\alpha\beta}=0,\quad\alpha,\beta=1,2,3,~\alpha\neq\beta.
\end{equation}
Similarly, 
\begin{equation}\label{symaalphabeta122D}
a_*^{\alpha\beta}=a_*^{\alpha\beta}=0,\quad\alpha,\beta=1,2,3,~\alpha\neq\beta.
\end{equation}

Finally, on account of the symmetry condition of the boundary data $({\rm S_{{\boldsymbol\varphi}}})$, we set 
\begin{align*}
{\bf u}_0(x)=-{\bf u}_0(-x).
\end{align*}
Then 
\begin{equation}\label{|symtilb2D}
b_{1}^{\alpha}=-b_{2}^{\alpha},\quad \alpha=1,2,\quad b_{1}^{3}=b_{2}^{3}.
\end{equation}
Similarly,
\begin{equation}\label{|symtilb002D}
b_{1}^{*\alpha}=-b_{2}^{*\alpha},\quad \alpha=1,2,\quad b_{1}^{*3}=b_{2}^{*3}.
\end{equation}
	
Now substituting  \eqref{syma112D} and \eqref{|symtilb2D} directly into \eqref{systemC2D}, and using Cramer's rule, we deduce
\begin{equation}\label{C13C23}
C_1^\alpha=-C_2^\alpha,\quad \alpha=1,2,\quad\mbox{and}~ C_1^3=C_2^3.
\end{equation}
Then, by using  \eqref{symaalphabeta2D} and \eqref{|symtilb2D}, equation \eqref{C1+C2_1} becomes
\begin{align}\label{C1+C2_matrix2D}
AX=B,
\end{align}
where $A=(a^{\alpha\beta})_{\alpha,\beta=1}^{3}$ with $a^{\alpha\beta}=0$, $\alpha\neq \beta$, $X=(0,0,\frac{C_1^3+C_2^3}{2})^{\mathrm T}$, and $B=(0,0,2b_{1}^{3}-(C_1^1-C_2^1)(a_{11}^{13}+a_{12}^{13}))^{\mathrm T}$. Similarly, substituting \eqref{symaalphabeta122D} and  \eqref{|symtilb002D} directly into \eqref{equ_C*alpha}, we obtain
\begin{align}\label{C1+C2_*2D}
A_*X_*=B_*,
\end{align}
where $A_*=(a_*^{\alpha\beta})_{\alpha,\beta=1}^{3}$ with $a_*^{\alpha\beta}=0$, $\alpha\neq \beta$, $X_*=(0,0,C_{*}^{3})^{\mathrm T}$, and $B_*=(0,0,2b_{1}^{*3})^{\mathrm T}$.	Combining  \eqref{C1+C2_matrix2D} and \eqref{C1+C2_*2D}, we have
\begin{equation*}
A(X-X_*)=(0,0,\mathcal{B}^3)^{\mathrm T},
\end{equation*}
where $$\mathcal{B}^3=2(b_{1}^{3}-b_{1}^{*3})-(C_1^1-C_2^1)(a_{11}^{13}+a_{12}^{13})-C_*^3(a^{33}-a_*^{33}),$$ and $(a^{\alpha\beta})_{\alpha,\beta=1}^3$ is positive definite; the details can be found in \cite{Li2021}. It follows from   Proposition \ref{lemCialpha},  \eqref{es b1 b1* beta=1} and \eqref{lem valpha1} that
\begin{equation*}
\mathcal{B}^3=O(\sqrt{\varepsilon}).
\end{equation*}
By Cramer's rule, we obtain
\begin{equation*}
\left|\frac{C_1^3+C_2^3}{2}-C_*^3\right|\leq C\sqrt{\varepsilon}.
\end{equation*}
The proof of Proposition \ref{propC2D} is complete.
\end{proof}

\begin{proof}[Proof of Proposition \ref{protildeb}]
Note that
\begin{align*}
&C_{2}^{\alpha}\int_{\partial D_{1}}{\boldsymbol\psi}_\beta\cdot\sigma[{\bf u}^{\alpha},p^{\alpha}]\nu-C_{*}^{\alpha}\int_{\partial D_{1}^{0}}{\boldsymbol\psi}_\beta\cdot\sigma[{\bf u}^{*\alpha},p^{*\alpha}]\nu\\
=&\,C_{2}^{\alpha}\Bigg(\int_{\partial D_{1}}{\boldsymbol\psi}_\beta\cdot\sigma[{\bf u}^{\alpha},p^{\alpha}]\nu-\int_{\partial D_{1}^{0}}{\boldsymbol\psi}_\beta\cdot\sigma[{\bf u}^{*\alpha},p^{*\alpha}]\nu\Bigg)+\Big(C_{2}^{\alpha}-C_{*}^{\alpha}\Big)\int_{\partial D_{1}^{0}}\frac{\partial v^{*\alpha}}{\partial \nu}\Big|_{+}\cdot{\boldsymbol\psi}_\beta.
\end{align*}
Then
\begin{align*}
\tilde b_{1}^{\beta}[{\boldsymbol\varphi}]-\tilde b_{1}^{*\beta}[{\boldsymbol\varphi}]&=\int_{\partial D_{1}}{\boldsymbol\psi}_\beta\cdot\sigma[{\bf u}_{0},p_{0}]\nu-\int_{\partial D_{1}^{0}}{\boldsymbol\psi}_\beta\cdot\sigma[{\bf u}_{0}^*,p_{0}^*]\nu\nonumber\\
&\quad+\sum_{\alpha=1}^{3}C_{2}^{\alpha}\Bigg(\int_{\partial D_{1}}{\boldsymbol\psi}_\beta\cdot\sigma[{\bf u}^{\alpha},p^{\alpha}]\nu-\int_{\partial D_{1}^{0}}{\boldsymbol\psi}_\beta\cdot\sigma[{\bf u}^{*\alpha},p^{*\alpha}]\nu\Bigg)\nonumber\\
&\quad+\sum_{\alpha=1}^{3}\Big(C_{2}^{\alpha}-C_{*}^{\alpha}\Big)\int_{\partial D_{1}^{0}}\frac{\partial v^{*\alpha}}{\partial \nu}\Big|_{+}\cdot{\boldsymbol\psi}_\beta.
\end{align*}
By virtue of Lemma \ref{es b1 b1* beta=1} and Lemma \ref{lem valpha1},  to obtain \eqref{convtilbe2D}, it suffices to prove the convergence of $C_{2}^{\alpha}-C_{*}^{\alpha}$. We have from Proposition \ref{lemCialpha} and Proposition \ref{propC2D} that
\begin{equation*}
|C_{2}^\alpha-C_*^\alpha|=\left|\frac{C_1^\alpha+C_2^\alpha}{2}-C_*^\alpha-\frac{C_1^\alpha-C_2^\alpha}{2}\right|=\frac{|C_1^\alpha-C_2^\alpha|}{2}\leq C
\begin{cases}
\sqrt{\varepsilon},~\alpha=1,\\
\varepsilon^{3/2},~\alpha=2,
\end{cases}
\end{equation*}
and
\begin{equation*}
|C_{2}^3-C_*^3|=\left|\frac{C_1^3+C_2^3}{2}-C_*^3-\frac{C_1^3-C_2^3}{2}\right|=\left|\frac{C_1^3+C_2^3}{2}-C_*^3\right|\leq C\sqrt{\varepsilon}.
\end{equation*}
Due to Lemma \ref{es b1 b1* beta=1} and Lemma \ref{lem valpha1}, we obtain \eqref{convtilbe2D}.
The proof of Proposition \ref{protildeb} is finished.
\end{proof}

\subsection{The Completion of the Proof of Theorem \ref{mainthm2}}

\begin{proof}[Proof of Theorem \ref{mainthm2}.]
We first obtain from Proposition \ref{protildeb} that, if $\tilde b_{1}^{*1}[{\boldsymbol\varphi}]\neq0$, then there exists  a sufficiently small constant $\varepsilon_0>0$ such that for $0<\varepsilon<\varepsilon_0$, 
\begin{equation}\label{tildeb1}
|\tilde b_{1}^{1}[{\boldsymbol\varphi}]|\geq\frac{1}{2}|\tilde b_{1}^{*1}[{\boldsymbol\varphi}]|>0.
\end{equation}
By virtue of \eqref{equ-decompositon} and Proposition \ref{propC2D}, we obtain
\begin{equation}\label{eqsymm}
\sum_{\alpha=1}^{2}\left(C_{1}^{\alpha}-C_{2}^{\alpha}\right)a_{11}^{\alpha\beta}=\tilde b_1^\beta,\quad\beta=1,2.
\end{equation}
It follows from Lemma \ref{lema11} and \eqref{syma112D} that
\begin{equation*}
\frac{1}{C}\varepsilon^{-2}\leq\det A\leq C\varepsilon^{-2},
\end{equation*}
where $A=(a_{11}^{\alpha\beta})_{\alpha,\beta=1}^2$. 
By Cramer's rule and \eqref{tildeb1}, we get
\begin{align}\label{lowerC11}
|C_1^1-C_2^1|=\left|\frac{1}{\det A}\tilde b_1^1[{\boldsymbol\varphi}] a_{11}^{22}+O(\varepsilon^{3/2})\right|\geq\frac{|\tilde b_{1}^{*1}[{\boldsymbol\varphi}]|}{C}\sqrt{\varepsilon}.
\end{align}
Moreover, from Proposition \ref{propu12} and Proposition \ref{lemCialpha}, it follows that
\begin{equation*}
|(C_{1}^{2}-C_{2}^{2})\nabla{\bf u}_{1}^2(0,x_2)|\leq C\varepsilon^{1/2}.
\end{equation*}
Combining  Proposition \ref{propu11} and \eqref{lowerC11} yields
\begin{align*}
|\nabla{{\bf u}}(0,x_2)|&=\left|\sum_{\alpha=1}^{2}\left(C_{1}^{\alpha}-C_{2}^{\alpha}\right)\nabla{\bf u}_{1}^{\alpha}(0,x_2)
+\nabla {\bf u}_{b}(0,x_2)\right|\\
&\geq|(C_{1}^{1}-C_{2}^{1})\nabla{\bf u}_{1}^{1}(0,x_2)|-C\geq\frac{|\tilde b_{1}^{*1}[{\boldsymbol\varphi}]|}{C\sqrt{\varepsilon}}.
\end{align*}
Theorem \ref{mainthm2}  is proved.
\end{proof}

\begin{remark}\label{rmk-p}
Recalling that 
\begin{equation*}
p=\sum_{\alpha=1}^{2}\left(C_{1}^{\alpha}-C_{2}^{\alpha}\right)p_{1}^{\alpha}+p_{b},\quad\mbox{in}~\Omega,
\end{equation*}
where $
p_{b}:=\sum_{\alpha=1}^{3}C_{2}^{\alpha}(p_{1}^{\alpha}+p_{2}^{\alpha})+p_{0}$ is bounded. To establish the lower bound of $p$, it suffices to deal with $\sum_{\alpha=1}^{2}\left(C_{1}^{\alpha}-C_{2}^{\alpha}\right)p_{1}^{\alpha}$. For this, by using \eqref{p12} and the estimate of $|\nabla q_1^2|$ in Proposition \ref{propu12}, we find that 
\begin{align}\label{estp12}
|p_1^2(0,x_2)-(q_1^2)_{R}|\geq|\bar p_1^2(0,x_2)|-|q_1^2(0,x_2)-(q_1^2)_{R}|\geq\frac{3\mu}{\varepsilon^2}-\frac{C}{\varepsilon}\geq\frac{1}{C\varepsilon^2}.
\end{align}
Similar to \eqref{tildeb1}, if $\tilde b_{1}^{*2}[{\boldsymbol\varphi}]\neq0$, then it follows from Proposition \ref{protildeb} that,  there exists  a sufficiently small constant $\varepsilon_0>0$ such that for $0<\varepsilon<\varepsilon_0$, 
\begin{equation*}
|\tilde b_{1}^{2}[{\boldsymbol\varphi}]|\geq\frac{1}{2}|\tilde b_{1}^{*2}[{\boldsymbol\varphi}]|>0.
\end{equation*}
This together with Cramer's rule and \eqref{eqsymm} yields
\begin{align*}
|C_1^2-C_2^2|=\left|\frac{1}{\det A}\tilde b_1^2[{\boldsymbol\varphi}] a_{11}^{11}+O(\varepsilon^{2})\right|\geq\frac{|\tilde b_{1}^{*2}[{\boldsymbol\varphi}]|}{C}\varepsilon^{3/2}.
\end{align*}
Combining with \eqref{estp12}, we derive
\begin{equation}\label{C2p12}
|(C_{1}^{2}-C_{2}^{2})(p_1^2(0,x_2)-(q_1^2)_{R})|\geq\frac{|\tilde b_{1}^{*2}[{\boldsymbol\varphi}]|\varepsilon^{3/2}}{C\varepsilon^2}\geq \frac{|\tilde b_{1}^{*2}[{\boldsymbol\varphi}]|}{C\sqrt{\varepsilon}}.
\end{equation}
However, in view of Propositions \ref{propu11} and \ref{lemCialpha}, we have 
\begin{equation*}
|(C_{1}^{1}-C_{2}^{1})(p_1^1(0,x_2)-(q_1^1)_{R})|\leq \frac{C\sqrt{\varepsilon}}{\varepsilon}\leq\frac{C}{\sqrt{\varepsilon}},
\end{equation*}
which is of the same order as in \eqref{C2p12}. Thus, it is still quite challenging to prove the lower bound of $p$ in the presence of strictly convex inclusions. This also requires us to develop new technique to handle this problem in the future.
\end{remark}

Finally, under the symmetric assumptions on the domain and boundary data, we have the following point-wise estimate for the Cauchy stress tensor $\sigma[{\bf u},p]$. Before stating it, we introduce a constant 
\begin{equation*}
q_{R}=\sum_{\alpha=1}^{2}(C_1^\alpha-C_2^\alpha)(q_{1}^\alpha)_{R},
\end{equation*}
which is independent of $\varepsilon$, here $(q_{1}^\alpha)_{R}$ is defined in \eqref{defqialpha}. 

\begin{theorem}\label{mainthmsigma}(Cauchy Stress estimates)
Under the assumptions of Theorem \ref{mainthm2}, let ${\bf u}\in H^1(D;\mathbb R^2)\cap C^1(\bar{\Omega};\mathbb R^2)$ and $p\in L^2(D)\cap C^0(\bar{\Omega})$ be a solution to \eqref{Stokessys} and \eqref{compatibility}. Then we have 
\begin{equation}\label{equ-sigma2D}
|\sigma[{\bf u},p-q_{R}]|\leq \frac{C}{\sqrt\varepsilon}\|{\boldsymbol\varphi}\|_{C^{2,\alpha}(\partial D;\mathbb R^2)},\quad\mbox{in}~\Omega_R.
\end{equation}
Moreover, at the segment $\overline{P_{1}P_{2}}$, if $\tilde b_{1}^{*1}[{\boldsymbol\varphi}]\neq0$, then
$$|\sigma[{\bf u},p-q_{R}]|(0,x_{2})\geq\frac{|\tilde b_{1}^{*1}[{\boldsymbol\varphi}]|}{C\sqrt{\varepsilon}},\quad|x_{2}|\leq\varepsilon.$$
\end{theorem}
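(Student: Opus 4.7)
The theorem combines an upper and lower bound for the Cauchy stress $\sigma[{\bf u},p-q_R]=2\mu e({\bf u})-(p-q_R)\mathbb I$. The plan is to exploit two ingredients absent from the analogous statement for $|\nabla{\bf u}|$ in Theorem \ref{mainthm2D}: the symmetry conditions $({\rm S_{H}}),({\rm S_{{\boldsymbol\varphi}}})$ assumed in Theorem \ref{mainthm2}, whose key consequence (via \eqref{C13C23} in the proof of Proposition \ref{propC2D}) is $C_{1}^{3}-C_{2}^{3}=0$; and the pressure-free shear component $\sigma_{12}=\mu(\partial_{x_1}{\bf u}^{(2)}+\partial_{x_2}{\bf u}^{(1)})$, which allows a lower bound that bypasses $p$ altogether.

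For the upper bound I would rewrite the decomposition \eqref{ud} as
\begin{align*}
\sigma[{\bf u},p-q_R]=\sum_{\alpha=1}^{2}(C_{1}^{\alpha}-C_{2}^{\alpha})\,\sigma[{\bf u}_{1}^{\alpha},p_{1}^{\alpha}-(q_{1}^{\alpha})_R]+\sigma[{\bf u}_{b},p_{b}],
\end{align*}
where the $\alpha=3$ contribution has been killed by the symmetry. Propositions \ref{propu11}, \ref{propu12}, and \ref{lemCialpha} then give $|(C_{1}^{1}-C_{2}^{1})\sigma[{\bf u}_{1}^{1},\cdot]|\leq C\sqrt{\varepsilon}\cdot C/\varepsilon=C/\sqrt{\varepsilon}$ and $|(C_{1}^{2}-C_{2}^{2})\sigma[{\bf u}_{1}^{2},\cdot]|\leq C\varepsilon^{3/2}\cdot C/\varepsilon^{2}=C/\sqrt{\varepsilon}$, while Proposition \ref{propu0} bounds $\sigma[{\bf u}_{b},p_{b}]$ uniformly. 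This yields \eqref{equ-sigma2D}; the decisive point is that, without $({\rm S_{H}}),({\rm S_{{\boldsymbol\varphi}}})$, the $\alpha=3$ pressure term would force the $C/\varepsilon$ blow-up seen in \eqref{upper-p2D}.

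For the lower bound at $(0,x_{2})$, I would use $|\sigma|\geq|\sigma_{12}|$ and decompose
\begin{align*}
\sigma_{12}[{\bf u}](0,x_{2})=(C_{1}^{1}-C_{2}^{1})\sigma_{12}[{\bf u}_{1}^{1}](0,x_{2})+(C_{1}^{2}-C_{2}^{2})\sigma_{12}[{\bf u}_{1}^{2}](0,x_{2})+\sigma_{12}[{\bf u}_{b}](0,x_{2}),
\end{align*}
again using $C_{1}^{3}-C_{2}^{3}=0$. For the main term, \eqref{estv112}--\eqref{estv1122} give $\partial_{x_2}({\bf v}_{1}^{1})^{(1)}(0,x_{2})=1/\varepsilon$ and $|\partial_{x_1}({\bf v}_{1}^{1})^{(2)}|\leq C$, and Proposition \ref{propu11} bounds $|\nabla{\bf w}_{1}^{1}|\leq C$; this produces $|\sigma_{12}[{\bf u}_{1}^{1}](0,x_{2})|\geq\mu/(2\varepsilon)$ for sufficiently small $\varepsilon$. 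Combined with $|C_{1}^{1}-C_{2}^{1}|\geq|\tilde b_{1}^{*1}[{\boldsymbol\varphi}]|\sqrt{\varepsilon}/C$ from \eqref{lowerC11} in the proof of Theorem \ref{mainthm2}, the main term satisfies the claimed $|\tilde b_{1}^{*1}[{\boldsymbol\varphi}]|/(C\sqrt{\varepsilon})$ bound.

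It remains to verify that the $\alpha=2$ and ${\bf u}_b$ contributions are strictly lower order on the axis. The formulas in \eqref{estv121} show $\partial_{x_2}({\bf v}_{1}^{2})^{(1)}(0,x_{2})=0$ and $\partial_{x_1}({\bf v}_{1}^{2})^{(2)}(0,x_{2})=0$, so $\sigma_{12}[{\bf v}_{1}^{2},\bar p_{1}^{2}]$ vanishes on the axis; Proposition \ref{propu12} then bounds the residual $|\sigma_{12}[{\bf u}_{1}^{2}](0,x_{2})|\leq C/\sqrt{\varepsilon}$, and multiplying by $|C_{1}^{2}-C_{2}^{2}|\leq C\varepsilon^{3/2}$ gives $O(\varepsilon)$; Proposition \ref{propu0} handles $\sigma_{12}[{\bf u}_{b}]=O(1)$. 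The main obstacle --- which Remark \ref{rmk-p} flags as genuinely unresolved for $p$ alone --- is sidestepped here precisely by the axial cancellation in $\sigma_{12}[{\bf v}_{1}^{2},\bar p_{1}^{2}]$: passing to the pressure-free shear component removes the dangerous $\alpha=2$ contribution that would otherwise obstruct a lower bound of the claimed order.
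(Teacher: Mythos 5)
Your proposal is correct and follows essentially the same route as the paper's proof: the symmetry consequence $C_1^3=C_2^3$ reduces both ${\bf u}$ and $p-q_R$ to the $\alpha=1,2$ terms, the upper bound then follows from Propositions \ref{propu11}, \ref{propu12}, \ref{lemCialpha}, \ref{propu0} exactly as in \eqref{upper-u2D}--\eqref{upper-p2D}, and the lower bound is obtained, as in the paper, from the pressure-free shear entry $2\mu e_{12}({\bf u}_1^1)\sim \mu/\varepsilon$ on the axis combined with $|C_1^1-C_2^1|\geq |\tilde b_1^{*1}[{\boldsymbol\varphi}]|\sqrt{\varepsilon}/C$ from \eqref{lowerC11}. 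Your explicit verification that the $\alpha=2$ shear contribution vanishes on the axis for ${\bf v}_1^2$ is a slightly finer bookkeeping than the paper's crude absorption of that term into the constant, but it is the same argument.
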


\begin{remark}
Here we show that the blow-up rate $\varepsilon^{-1/2}$  of $|\sigma[{\bf u},p-q_{R}]|$ in dimension two is optimal, which is consistent with the result in \cite{AKKY}. 
\end{remark}

\begin{proof}[Proof of Theorem \ref{mainthmsigma}]
Repeat  the proof of Theorem \ref{mainthm2D}, we have the estimate of $|\nabla{{\bf u}}(x)|$ as showed in \eqref{upper-u2D}.
For $|p|$,  combining with $C_1^3=C_2^3$ in \eqref{C13C23}, for any $x\in\Omega_R$, the estimate  \eqref{upper-p2D} becomes
\begin{align*}
|p(x)-q_{R}|&\leq\sum_{\alpha=1}^{2}\left|(C_{1}^{\alpha}-C_{2}^{\alpha}\right)(p_{1}^{\alpha}(x)-(q_{1}^\alpha)_{R})|+C\\
&\leq\,\frac{C\sqrt{\varepsilon}}{\varepsilon}+\frac{C\varepsilon^{3/2}}{\varepsilon^2}+C\leq \frac{C}{\sqrt{\varepsilon}},
\end{align*}
where $q_{R}=\sum_{\alpha=1}^{2}(C_1^\alpha-C_2^\alpha)(q_{1}^\alpha)_{R}$. Thus, \eqref{equ-sigma2D} is proved.

For the lower bound, by virtue of \eqref{lowerC11} and $|\partial_{x_2}({\bf u}_1^1)^{(1)}|(0,x_2)\geq\frac{1}{\varepsilon}$, we have for any  $(0,x_2)\in\Omega_R$,
\begin{align*}
&\Big|\sigma[{\bf u},p-q_{R}]\Big|(0,x_{2})=\Big|2\mu e({\bf u})-(p-q_{R})\mathbb{I}\Big|(0,x_{2})\\
\geq&\,\Big|\sum_{\alpha=1}^{2}(C_{1}^{\alpha}-C_{2}^{\alpha})\Big(2\mu e({\bf u}_{1}^{\alpha})-(p_{1}^{\alpha}-(q_{1}^\alpha)_{R})\mathbb{I}\Big)\Big|(0,x_{2})-C\\
\geq&\,2\mu\Big|(C_{1}^{1}-C_{2}^{1}) e_{12}({\bf u}_{1}^{1})\Big|(0,x_{2})-C\geq\frac{|\tilde b_{1}^{*1}[{\boldsymbol\varphi}]|}{C\sqrt{\varepsilon}}.
\end{align*}
The proof of Theorem \ref{mainthmsigma} is finished.
\end{proof}

\noindent{\bf{\large Acknowledgements.}}
H.G. Li was partially supported by NSF of China (11971061).
L.J. Xu was partially supported by NSF of China (12301141).

\end{document}